\documentclass{article}

\setlength{\textwidth}{16.5cm} \setlength{\oddsidemargin}{0cm}
\setlength{\evensidemargin}{0cm} \setlength{\topmargin}{-2cm}
\setlength{\textheight}{24cm} %\catcode`@=11\usepackage{showkeys}
\usepackage{color}
\usepackage{amsthm,amssymb,amsfonts,mathrsfs}
\usepackage{amsmath}
%\catcode`@=11 \@addtoreset{equation}{section}

%\def\demi{{\textstyle{1\over 2}}}
%\catcode`@=12
\newtheorem{Theorem}{Theorem}[section]

\theoremstyle{Definition}

\newtheorem{Definition}{Definition}[section]
\newtheorem{Remark}{Remark}

\newtheorem{Assumptions}{Hypothesis}[section]

\def\io{\int_\omega}
\def\R{{\mathbb{R}}}

\def\cH{\mathcal H}

\def\itau12{\int_{\tau_1}^{\tau_2}}

\def\ds{\displaystyle}

\title {Controllability for a degenerate cascade system}
\author{{\sc Idriss Boutaayamou}\thanks{This work has been done while Idriss Boutaayamou was visiting Università degli Studi di Bari, under the Young Investigator Training Program 2018 supported by ACRI (Associazione di Fondazioni e di Casse di Risparmio Spa) and Università degli Studi di Urbino Carlo Bo.}\\
D\'epartement de Math\'ematiques Informatiques et Gestion,\\
Facult\'e Polydisciplinaires de Ouarzazate,
Universit\'e Ibn Zohr,\\
B.P. 638, Ouarzazate 45000, Morocco\\ email:
dsboutaayamou@gmail.com\\
{\sc Genni Fragnelli}\thanks{The author is a member of the Gruppo Nazionale per l'Analisi Ma\-te\-matica, la Probabilit\`a e le loro Applicazioni (GNAMPA) of the
Istituto Nazionale di Alta Matematica (INdAM) and she is supported by the FFABR {\it Fondo
per il finanziamento delle attivit\`a base di ricerca} 2017, by the INdAM- GNAMPA Project 2019 {\it Controllabilit\`a di PDE in modelli fisici e in scienze della vita} and by PRIN 2017-2019 {\it Qualitative and quantitative aspects of nonlinear PDEs}.}\\
Dipartimento di Matematica\\ Universit\`{a} di Bari "Aldo Moro"\\
Via
E. Orabona 4\\ 70125 Bari - Italy\\ email: genni.fragnelli@uniba.it}

\date{}
\begin{document}

\maketitle

\vspace{0.3cm}

\centerline{ {\it  }}

\begin{abstract}
In this paper we consider a cascade system in non divergence form which models the interaction between two different species, the first one can be seen as a predator and the other as a prey. Both of them depend on time, on age and on space. Moreover, the diffusion coefficients degenerate at the boundary of domain. We study,  in particular, null controllability of the system via the observability inequality for the non homogeneous adjoint problem, which is deduced by Carleman estimates.
\end{abstract}
%%%%%%%%%%%%%%%%%%%%%%%%%%%%%%%%%%%%
%%%%%%%%%%%%%%%%%%%%%%%%%%%%%%%%%%%
%%%%%%%%%%%%%%%%%%%%%%%%%%%%%%%%%%%%
%%%%%%%%%%%%%%%%%%%%%%%%%%%%%%%%%%%%
%%%%%%%%%%%%%%%%%%%%%%%%%%%%%%%%%%%%
%%%%%%%%%%%%%%%%%%%%%%%%%%%%%%%%%%%%%%%%
%%%%%%%%%%%%%%%%%%%%%%%%%%%%%%%%%%%%%%%

\noindent Keywords:  Population cascade  model, degenerate equations, Carleman estimates, mull controllability, observability inequality

\noindent 2000AMS Subject Classification: 35Q93, 93B05, 93B07, 34H15, 35A23, 35B99

\section{Introduction}
This article is devoted to study the controllability properties for the following linear degenerate coupled systems in one space dimension:
\begin{equation} \label{0}
\begin{cases}
\ds\frac{\partial y}{\partial t}+\frac{\partial y}{\partial a}- k_1(x)y_{xx} + \mu_{11}(t,x,a) y + \mu_{13}(a)y=\bar g(t,x,a) \chi_{\omega}, & (t,a,x)\in Q, 
\\
\ds\frac{\partial w}{\partial t}+\frac{\partial w}{\partial a}-k_2(x)w_{xx} + \mu_{22}(t,x,a) w-\mu (t,x,a) y + \mu_{23}(a) w=0, & (t,a,x)\in Q,
\\
y(t, a, 0) = y(t,a, 1) = w(t,a, 0) = w(t, a, 1) = 0, & (t,a)\in Q_{T,A},\\
y(0, a, x) =y_0(a,x),\quad w(0,a, x) = w_0(a,x), &  (a,x) \in Q_{A,1},\\
y(t, 0, x)=\int_0^A \gamma_1 (a, x)y (t, a, x) da,  & (t,x) \in Q_{T,1},\\
w(t, 0, x)=\int_0^A \gamma_2 (a, x)w(t, a, x) da,  & (t,x) \in Q_{T,1}.
\end{cases}
\end{equation}
Here 
$Q:=(0,T)\times (0,A)\times (0,1)$, $Q_{T,A} := (0,T)\times (0,A)$, $Q_{A,1}:=(0,A)\times(0,1)$ and
$Q_{T,1}:=(0,T)\times(0,1)$. Moreover, $y(t,a,x)$ and $w(t,a,x)$ are the distribution of certain individuals  at location $x \in (0,1)$, at time $t\in(0,T)$, where $T$ is fixed, and of age $a\in (0,A)$.
$A$ is the maximal age of life, while $\gamma_i$, $i=1,2$, are the natural fertility. Thus, the formulas $\int_0^A \gamma_1 y da$,  $\int_0^A \gamma_2 w da$ denote the distributions of newborn individuals at time $t$ and location $x$. Moreover, $\mu_{1i}$, $i=1,3$, $\mu_{2j}, j=2,3$, are the natural death rates and are such that  $\mu_{11}, \mu_{22} \in C(\bar Q, \R_+)$.The functions $\mu_{13}, \mu_{23} \in C([0,A), R_+)$ and $\int_0^A \mu_{13}(a)da=+\infty$, $\int_0^A \mu_{23}(a)da=+\infty$;  On the other hand $\mu  \in C(\bar Q, \R_+)$ can be seen as the rate of influence of the first population on the second one. Hence the term $\mu y$ can be considered as a control in the second equation; thus, $y$ is a predator and $w$ is a prey.  For more details about the modelling of such system (commonly named Lotka-McKendrick system) and the biological significance of the hypotheses, we refer to Webb \cite{web}. Finally, $ \chi_\omega$ is the characteristic function of  the control region $\omega\subset \subset(0,1)$, the control $\bar g$ belongs to a suitable Lebesgue space and the functions $k_i$, $i=1,2,$ are the dispersion coefficients and degenerate at the boundary of the state space. In particular,  we say that a function $k$ is 
\begin{Definition}\label{Wdef} {\bf Weakly degenerate (WD) at $0$ (or at $1$)}  if $k \in  W^{1,1}([0,1])$,
\[
 k>0 \text{ in }(0,1],  \;  k(0)=0 \; (\text{or } k>0 \text{ in }[0,1)  \text{ and } k(1)=0)
 \]
and there exist $M_1\in (0,1)$ such that
$x k'(x)  \le M_1k(x) $  for almost every $x\in [0,1]$ (or there exist $M_2\in (0,1)$ such that
$(x-1)k'(x) \le M_2 k(x)$ for almost every  $x\in [0,1]$).
 \end{Definition} 
or
\begin{Definition}\label{Sdef} {\bf Strongly degenerate (SD)  at $0$ (or at $1$) }  if $k \in  W^{1,\infty}([0,1])$,
\[
 k>0 \text{ in }(0,1), \;  k(0)=0 \;  (\text{or } k>0 \text{ in }[0,1)  \text{ and } k(1)=0)
 \]
and there exist $M_1\in [1,2)$ such that
$x k'(x)  \le M_1k(x) $ for almost every $x\in [0,1]$ (or there exist $ M_2\in [1,2)$ such that
$(x-1)k'(x) \le M_2 k(x)$ for almost every $x\in [0,1]$).
 \end{Definition} 
 For example, as $k$ one can consider $k(x)=x^\alpha$ or $k(x)=(1-x)^\beta,$ with $\alpha, \beta >0$.

Observe that
thanks to the following transformations 
\[u(t,a, x):= e^{\int_0^a \mu_{13}(\gamma)d\gamma}y(t,a,x) \quad
\text{ and } \quad
v(t,a, x):= e^{\int_0^a \mu_{23}(\gamma)d\gamma}w(t,a,x),\] 
\eqref{0} can be rewritten as

\begin{equation} \label{1}
\begin{cases}
\ds\frac{\partial u}{\partial t}+\frac{\partial u}{\partial a}- k_1(x)u_{xx} + \mu_{11}(t,x,a) u =g(t,x,a) \chi_{\omega}, & (t,a,x)\in Q, 
\\
\ds\frac{\partial v}{\partial t}+\frac{\partial v}{\partial a}-k_2(x)v_{xx} + \mu_{22}(t,x,a) v-\mu_{21}(t,x,a) u=0, & (t,a,x)\in Q,
\\
u(t, a, 0) = u(t,a, 1) = v(t,a, 0) = v(t, a, 1) = 0, & (t,a)\in Q_{T,A},\\
u(0, a, x) =u_0(a,x),\quad v(0,a, x) = v_0(a,x), &  (a,x) \in Q_{A,1},\\
u(t, 0, x)=\int_0^A \beta_1 (a, x)u (t, a, x) da,  & (t,x) \in Q_{T,1},\\
v(t, 0, x)=\int_0^A \beta_2 (a, x)v(t, a, x) da,  & (t,x) \in Q_{T,1}.
\end{cases}
\end{equation}
where $u_0(a, x):=  e^{\int_0^a \mu_{13}(\gamma)d\tau}y_0(a,x)$, $v_0(a, x):=  e^{\int_0^a \mu_{23}(\gamma)d\tau}w_0(a,x)$, $g(t,a,x):= e^{\int_0^a \mu_{13}(\tau)d\tau}\bar g(t,a,x)$,  $\mu_{21}(t,a,x):=e^{\int_0^a(\mu_{23}(\gamma) -\mu_{13}(\gamma) )d\gamma}\mu(t,a,x) $ and $\beta_i(a, x):=e^{-\int_0^a \mu_{i3}(\tau)d\tau} \gamma_i (a,x)  $, $i=1,2$. Observe that the two integrals $e^{\int_0^a \mu_{13}(\gamma)d\gamma}$ and $e^{\int_0^a \mu_{23}(\gamma)d\gamma}$  are the inverse of the probability of survival of an individual from age 0 to a. Thus, in place of \eqref{0}, it is not restrictive to consider \eqref{1}, as we will do from now on in the rest of the paper. 

Population dynamics models are the subject of numerous papers where they were investigated from many points of view. One among the notable questions in these models was the controllability issue for age and space structured population dynamics models which were studied in an intensive literature. 
In this context, we can cite the pioneering papers of V. Barbu and al. in \cite{iannelli}, B. Ainseba and S. Anita in \cite{Ain4, Ain3, Ain1, Ain2}, and recently \cite{Maity1,Maity2}.

In \cite{iannelli}, the authors proved the null controllability for a population dynamics model without diffusion, both in the cases of migration and birth control for $T\geq A$, showing directly an appropriate observability inequality for the associated adjoint system. Moreover, they concluded  that, in the case of the migration control, only a classes of age was controlled in contrast with the birth control, which allows to steer all population to extinction.

 In \cite{Ain4, Ain3, Ain1, Ain2}, the null controllability result for the age-space structured model  was established  when the diffusion coefficient $k\equiv1$ and for any space dimension, exploiting the results obtained for heat equation in \cite{Fursikov}. 
 
In \cite{Maity2}  the authors studied the null controllability of a linear system coming from a population dynamics model with age structuring and spatial diffusion (of Lotka-McKendrick type). They considered a control that is localized in the space variable as well as with respect to the age . In their work the first novelty was that the age interval, in which the control needs to be active, can be arbitrarily small  without needing to exclude a neighborhood of zero. The second one is that the whole population can be steered into zero in a uniform time, without excluding some interval of low ages.

In \cite{Maity1} the authors considered a linear infinite dimensional system obtained by grafting an age structure. Such systems appear essentially in population dynamics with age structured when phenomena like spatial diffusion or transport are also taken into consideration. They asserted that if the initial system is null controllable in a time small enough, then the structured system is also null controllable in a time depending on the various involved parameters. 

In \cite{ech}, B. Ainseba and al. studied a more general case allowing the dispersion coefficient, $k$, to depend on the variable $x$ and to verify $k(0)=0$. Moreover, they assume the condition $T\geq A$ (as in \cite{iannelli}) and this constitutes a restrictiveness on the "optimality" of the control time $T$ since it means, for example, that for a pest population, whose the maximal age $A$ may equal  many days (maybe many months or years), we need much time to bring the population to the zero equilibrium. In the same trend and to overcome the condition $T\geq A$, in \cite{bout_ech,man} the authors used the fixed point technique (in particular the  Leray-Schauder Theorem) to obtain the null controllability for an intermediate system.
In \cite{f_JMPA, Genni2, Genni3}, the author considered the divergence and non divergence form and used only Carleman estimates and a technique based on cut-off functions, making the proof slimmer and easier to read. However, observe that  all the previous papers deal with a single equation.

Up to now, little is known about the null controllability for a population dynamics  cascade system both in degenerate and nondegenerate cases. Among the  papers treating this argument we recall \cite{anita2005, maniar11, idriss2016, tereza,  zhao2005} and the references therein. In particular, in \cite{anita2005} the authors studied a coupled
reaction-diffusion equation describing interactions between a prey population and a predator population. The goal of the work was to look for a suitable control supported on
a small spatial subdomain which guarantees the stabilization of the predator population
to zero. In \cite{zhao2005}, the objective was different. More precisely, the authors considered an
age-dependent prey-predator system and they proved the existence and uniqueness for
an optimal control which gives the maximal harvest via the
study of the optimal harvesting problem associated to their coupled model.
However, the previous results were found in the case when the diffusion coeffcients are
constants. This leads to generalize the model of \cite{anita2005}  in \cite{maniar11}, in \cite{tereza} and  in \cite{idriss2016}, where a linear or a semilinear parabolic cascade system with two different diffusion coefficients is considered. In particular, they can degenerate at the boundary or in the interior of the space domain. Finally, in \cite{salhi} the author considered a system similar to the previous ones but with the addition of a singular term. However, in all the previous papers the prey and the predator depend only on  time and on  space. To our knowledge, this is the first paper where the two populations depend also on  age and the diffusion coefficients degenerate at the boundary of the domain. 

The paper is organized as follows: in Section \ref{section2} we introduce the suitable Hilbert spaces where the problem is well posed and we give, using the semigroup approach, the existence theorem; in Section \ref{section3} we prove Carleman estimates and observability inequalities for the non homogeneous associated adjoint problem via the Carleman estimates given in \cite{f_JMPA} for a single equation. Finally, in section \ref{section4} we deduce the null controllability result for an intermediate system and hence for the initial problem \eqref{1}.

\section{Assumptions and well-posedness} \label{section2}

Through the paper, we assume that the rates $\mu_{ij}$ and $\beta_i$,  $i,j=1,2$ satisfy the following:
\begin{Assumptions}\label{ratesAss}
 The  functions $\mu_{11}$, $\mu_{2i}$ and $\beta_i$,  $i=1,2$, are such that
\begin{equation}\label{3}
\begin{aligned}
&\bullet \beta_i \in C(\bar Q_{A,1}) \text{ and } \beta_i \geq0  \text{ in } Q_{A,1}, \\
&\bullet \mu_{11}, \mu_{2i} \in C(\bar Q) \text{ and } \mu_{11},  \mu_{2i}\geq0\text{ in } Q.
\end{aligned}
\end{equation}
\end{Assumptions}

To study the well posedness of \eqref{1}, 
we consider four situations, namely the weakly-weakly degenerate case (WWD), i.e. the case when $k_1$ and $k_2$ are both (WD), the strongly-strongly degenerate case (SSD), i.e. the case when $k_1$ and $k_2$ are both (SD), the weakly-strongly degenerate case (WSD), i.e the case when $k_1$  is (WD) and $k_2$ is (SD), and the 
strongly-weakly degenerate case (SWD),  i.e the case when $k_1$ is (SD) and $k_2$ is (WD). Towards this end, as in  \cite{cfr},\cite{f_JMPA} or in \cite{fm1} we consider
the following  weighted spaces:
\[
L^2_{\frac{1}{k_i}}(0,1):= \left\{ u \in L^2(0,1): \int_0^1 \frac{u^2}{k_i}dx <\infty\right\},
\]
\[
 \mathcal{H}_{\frac{1}{k_i}}^1(0, 1)
:=L^2_{\frac{1}{k_i}}(0,1)\cap H^1_0(0,1)
\]
and
\[
\mathcal{H}_{\frac{1}{k_i}}^2(0, 1):=\big\{ u \in  \mathcal{H}_{\frac{1}{k_i}}^1(0, 1):  u_x
\in H^1(0, 1)\big\},
\]
$i=1,2$,
with the norms
\[ \|u\|_{L^2_{\frac{1}{k_i}}(0,1)}^2:= \int_0^1
\frac{u^2}{k_i} dx, \quad \forall\, u\in L^2_{\frac{1}{k_i}}(0,1),\]
\[
\|u\|^2_{\cH^1_{\frac{1}{k_i}}}:=\|u\|_{L^2_{\frac{1}{k_i}}(0,1)}^2 +
\|u_x\|^2_{L^2(0,1)}, \quad \forall\, u\in
\cH^1_{\frac{1}{k_i}}(0,1)\]
 and
\[
\|u\|_{\cH^2_{\frac{1}{k_i}}(0,1)}^2
:=\|u\|_{\cH^1_{\frac{1}{k_i}}(0,1)}^2 +
\|k_iu_{xx}\|^2_{L^2_{\frac{1}{k_i}}(0,1)},\quad \forall\,u\in
\cH^2_{\frac{1}{k_i}}(0,1).
\]
Indeed, it is a trivial fact that, if $u_x\in H^1(0,1)$, then $k_iu_{xx}
\in L^2_{\frac{1}{k_i}}(0,1)$, so that the norm for
$\cH^2_{\frac{1}{k_i}}(0,1)$ is well defined. For this reason in \cite{fm}, $\cH^2_{\frac{1}{k_i}}(0,1)$ is written in a more appealing way as
\[
\cH^2_{\frac{1}{k_i}}(0,1) :=\Big\{ u \in \cH^1_{\frac{1}{k_i}}(0,1) \,
\big| \, u_x\in H^1(0,1) \mbox{ and } k_iu_{xx} \in
L^2_{\frac{1}{k_i}}(0,1)\Big\}.
\]

As in \cite[Theorem 2.3]{cfr} or in \cite[Theorem 2.2]{fm}, we can prove that, for $i=1,2$, the  operators
$(A_i,D(A_i))$ defined by $A_iu := k_iu_{xx}$, with
$ u \in D(A_i) =\cH^2_{\frac{1}{k_i}}(0,1) $,
are closed, self-adjoint, negative  with dense domain in $L^2_{\frac{1}{k_i}}(0, 1)$. Moreover, setting $ \mathcal A_a u := \ds \frac{\partial  u}{\partial a}$, we have that the operators
%\[
%\mathcal A_a u := \frac{\partial  u}{\partial a}
%\]
%for $u \in D(\mathcal A_a)= H^1(0,A).$ Again, we have that $(\mathcal A_a, D(\mathcal A_a))$ generates a strongly continuous semigroup on $L^2(0,A)$. Hence
\[
\mathcal A_iu:= \mathcal A_a u - 
A_i u,
\]
%for $u \in L^2(0,A; D(\mathcal A_0)) \cap H^1(0,A; L^2(0,1))$ generates a strongly continuous semigroup on $L^2(Q_{A,1})$.
%By \cite[Example III.5.9]{en} and \cite[Theorem 3.4.4]{lm}, one can show
%the existence of a unique solution for the model \eqref{1}. In particular, 
%setting
%\[
%\mathcal A(t)u:= \frac{\partial u}{\partial a}
%+\mu(t, a, x)u-\mathcal A_0 u,
%\]
for 
\[
u \in D(\mathcal A_i) =\left\{u \in L^2(0,A;D(A_i)) : \frac{\partial u}{\partial a} \in  L^2(0,A;H^1_{\frac{1}{k_i}}(0,1)), u(0, x)= \int_0^A \beta_i(a, x) u(a, x) da\right\},
\]
generate two strongly continuous semigroups on $L^2(0,A) \times L^2 _{{\frac{1}{k_i}}}(0,1)$ (see also \cite{iannelli}).  Now,
 consider the Hilbert spaces $\mathbb{L}=\mathrm{L}^{2}_{\frac{1}{k_1}}(0,1)\times \mathrm{L}^{2}_{\frac{1}{k_2}}(0,1)$, $\mathbb{H}= \mathcal H^1_{\frac{1}{k_1}}(0,1)\times \mathcal H^1_{\frac{1}{k_2}}(0,1)$ , $\mathbb{K}= \mathcal H^{2}_{\frac{1}{k_1}}(0,1)\times \mathcal  H^{2}_{\frac{1}{k_2}}(0,1)$ 
and define
\[
\mathcal U(t)=\begin{pmatrix}
u(t)\\
v(t)
\end{pmatrix}, \quad \mathcal{A}=\begin{pmatrix}
\mathcal A_1&0\\
0&\mathcal A_2
\end{pmatrix}, \quad D(\mathcal{A})=D(\mathcal A_1)\times D(\mathcal A_2),
\]
\[
B(t)=\begin{pmatrix}
M_{\mu_{11}(t)}&0\\
-M_{\mu_{21}(t)}&M_{\mu_{22}(t)}
\end{pmatrix} \; \text{ and }\;
f(t,\cdot,\cdot)=\begin{pmatrix}
g(t,\cdot, \cdot) \chi_\omega \\
0
\end{pmatrix},
\]
where $M_{\mu_{2i}(t)}y= \mu_{2i}(t)y$, $i=1,2$. Analogously for $M_{\mu_{11}}$. Observe that  the operator $\mathcal{A}$ is a diagonal matrix and $B$ is a triangular one. Moreove, $B$ can be seen as a bounded perturbation of $(\mathcal A, D(\mathcal A))$. Clealy, \eqref{1} can be rewritten as
\begin{equation}\label{1new}
\begin{cases}
\mathcal U'(t) +\mathcal{A}\mathcal U(t) +B(t)\mathcal U(t)=f(t),\\
\mathcal U(0)=\begin{pmatrix}
u_0\\
v_0
\end{pmatrix}.
\end{cases}
\end{equation}

Setting $L^2_{{\frac{1}{k_1}}}(Q):= L^2(Q_{T,A}; L^2_{\frac{1}{k_1}}(0,1))$ and $L^2_{{\frac{1}{k_i}}}(Q_{A,1}):= L^2(0,A;L^2_{\frac{1}{k_i}}(0,1))$, $i=1,2$, the following well posedness and regularity results hold (see \cite{maniar11} and the references therein).

\begin{Theorem}\label{esistenza}Assume Hypothesis \ref{ratesAss} and suppose that $k_i$, $i=1,2$, are $(SD)$ or $(WD)$ at $0$ and/or at $1$.
The operator $\mathcal{A}$ generates a strongly
continuous semigroup $(\mathcal T(t))_{t\geq0}$.
Moreover, for all $g \in  L^2_{\frac{1}{k_1}}(Q)$ and for all $(u_0, v_0) \in L^2(0,A;\mathbb{L})$, system \eqref{1} admits a unique solution
\[(u,v) \in C([0,T]; L^2(0,A; 
\mathbb{L})) \cap L^2 (0,T; H^1(0,A;\mathbb{H}))\]
 of \eqref{1}. In addition, if $g \equiv 0$, 
$
(u, v) \in C^1\big([0,T]; L^2(0,A; 
\mathbb{L})\big).
$ 
\end{Theorem}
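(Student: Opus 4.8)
The plan is to read the cascade system as the abstract Cauchy problem \eqref{1new} on $L^2(0,A;\mathbb{L})$ and to deduce everything from semigroup theory, using the single-equation facts already recorded above. First I would establish that $\mathcal{A}$ generates a strongly continuous semigroup. Since $\mathcal{A}=\mathrm{diag}(\mathcal{A}_1,\mathcal{A}_2)$ is block-diagonal and, by the cited constructions, each $\mathcal{A}_i$ generates a strongly continuous semigroup $T_i(t)$ on $L^2(0,A;L^2_{\frac{1}{k_i}}(0,1))$, the family $\mathcal{T}(t):=\mathrm{diag}(T_1(t),T_2(t))$ is a strongly continuous semigroup on the product $L^2(0,A;\mathbb{L})$ whose generator is exactly $(\mathcal{A},D(\mathcal{A}))$: the semigroup identities, strong continuity at $t=0^+$, and the identification of the generator (with domain $D(\mathcal{A}_1)\times D(\mathcal{A}_2)$) all pass componentwise to the direct sum. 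This settles the first assertion, independently of the coupling and the source.

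Next I would incorporate $B(t)$. Its entries are multiplications by $\mu_{11},\mu_{21},\mu_{22}$, which under Hypothesis \ref{ratesAss} are continuous on the compact set $\bar Q$ and hence uniformly bounded; thus $B(t)\in\mathcal{L}(L^2(0,A;\mathbb{L}))$ with $\sup_{t\in[0,T]}\|B(t)\|<\infty$ and $t\mapsto B(t)$ continuous. By the bounded-perturbation theorem the (non-autonomous) problem \eqref{1new} is well posed, and its mild solution is the fixed point of the variation-of-constants map $\mathcal{U}(t)=\mathcal{T}(t)\mathcal{U}(0)+\int_0^t\mathcal{T}(t-s)\big(f(s)-B(s)\mathcal{U}(s)\big)\,ds$; existence and uniqueness in $C([0,T];L^2(0,A;\mathbb{L}))$ follow by a contraction/Gronwall argument, with $f\in L^2(0,T;L^2(0,A;\mathbb{L}))$ since $g\in L^2_{\frac{1}{k_1}}(Q)$. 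One may equivalently exploit the triangular (cascade) structure of $B$: solve the decoupled first equation for $u$ by the single-equation theory, then feed the now-known source $\mu_{21}u$ into the second equation and solve for $v$, which makes the one-directional coupling explicit and bypasses the fixed point entirely.

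For the parabolic regularity $(u,v)\in L^2(0,T;H^1(0,A;\mathbb{H}))$ I would carry out the standard energy estimate, pairing each equation with its solution component in the weighted $L^2_{\frac{1}{k_i}}$ inner product and integrating in $x$, $a$ and $t$: the self-adjointness and negativity of $A_i$ (equivalently the coercivity of the associated weighted Dirichlet form) turn the diffusion term into control of $\|u_x\|_{L^2}$ and $\|v_x\|_{L^2}$, and after absorbing the zero-order and coupling terms via Gronwall one obtains the $\mathbb{H}$-valued bound; the renewal terms at $a=0,A$ are treated through Hypothesis \ref{ratesAss} exactly as for a single equation. Finally, when $g\equiv0$ the source vanishes and $\mathcal{U}$ is the orbit of the perturbed semigroup, so that (for data in $D(\mathcal{A})$, and in general by the regularity theory of \cite{maniar11}) it is a classical solution with $\mathcal{U}'=-(\mathcal{A}+B)\mathcal{U}\in C([0,T];L^2(0,A;\mathbb{L}))$, whence the stated $C^1$ regularity. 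The only step demanding real care is this last regularity analysis in the weighted degenerate spaces: because $k_i$ vanishes at the boundary, the integrations by parts produce boundary contributions that are controlled only through the (WD)/(SD) structure encoded in Definitions \ref{Wdef}--\ref{Sdef} and the construction of $(A_i,D(A_i))$, and this is where the bulk of the technical verification lies.
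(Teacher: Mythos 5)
The paper does not actually prove Theorem \ref{esistenza}: it states it and defers entirely to \cite{maniar11} ``and the references therein''. So there is no in-paper proof to match; your outline (diagonal generation for $\mathcal{A}$, bounded perturbation by $B(t)$, variation of constants / triangular decoupling, energy estimates) is exactly the standard route that such a citation presupposes, and the componentwise generation argument and the fixed-point step are fine as stated.

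There is, however, one concrete gap in your argument. You assert that $B(t)\in\mathcal{L}(L^2(0,A;\mathbb{L}))$ because $\mu_{11},\mu_{21},\mu_{22}$ are bounded on $\bar Q$. The diagonal entries are indeed bounded multiplications on their own weighted spaces, but the off-diagonal entry $-M_{\mu_{21}(t)}$ must map the \emph{first} component space $L^2_{\frac{1}{k_1}}(0,1)$ into the \emph{second} component space $L^2_{\frac{1}{k_2}}(0,1)$, since $\mathbb{L}=L^2_{\frac{1}{k_1}}(0,1)\times L^2_{\frac{1}{k_2}}(0,1)$. Its boundedness requires
\[
\int_0^1 \frac{\mu_{21}^2\,u^2}{k_2}\,dx \;\le\; C\int_0^1 \frac{u^2}{k_1}\,dx ,
\]
i.e.\ essentially $\mu_{21}^2\,k_1/k_2\in L^\infty(0,1)$, and this does not follow from Hypothesis \ref{ratesAss} together with the (WD)/(SD) assumptions alone: if $k_2$ degenerates faster than $k_1$ (e.g.\ $k_1(x)=x^{1/2}$, $k_2(x)=x$), the ratio $k_1/k_2$ blows up at the degeneracy point and $M_{\mu_{21}}$ is unbounded between these spaces. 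The same obstruction reappears in your alternative ``triangular'' route (the source $\mu_{21}u$ fed into the $v$-equation must lie in $L^2_{\frac{1}{k_2}}(Q)$) and in the energy estimate (the cross term $\int \mu_{21}uv/k_2$ needs the same weight comparison). Note that the later Hypothesis \ref{AssP}(2), $k_1\ge k_2$, is tuned to the \emph{adjoint} coupling ($\mu_{21}y$ entering the $z$-equation, which needs $k_2/k_1$ bounded) and goes the wrong way here. So you must either add the appropriate weight-comparison condition, restrict $\mu_{21}$ near the degeneracy, or explain how the framework of \cite{maniar11} circumvents this; as written the bounded-perturbation step fails in general. A secondary, smaller point: the claimed regularity is $L^2\big(0,T;H^1(0,A;\mathbb{H})\big)$, which includes $\partial_a u,\partial_a v\in L^2(\mathbb{H})$; your energy estimate only yields $u_x,v_x\in L^2$, i.e.\ membership in $L^2\big(0,T;L^2(0,A;\mathbb{H})\big)$, so the $a$-derivative regularity needs a separate argument (e.g.\ via $D(\mathcal{A}_i)$ and a density/maximal-regularity step).
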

\section{Carleman estimates for the adjoint cascade system} \label{section3}
In general, null controllability
for a linear parabolic system
is, roughly speaking, equivalent to or follows by the observability for the associated  adjoint problem
\begin{equation}\label{adjoint}
\begin{cases}
\ds \frac{\partial z}{\partial t} + \frac{\partial z}{\partial a}
+k_1(x)z_{xx}-\mu_{11}(t, a, x)z+\mu_{21}(t, a, x)y =- \beta_1(a,x)z(t,0,x) ,& (t,a,x) \in Q,  \\
\ds \frac{\partial y}{\partial t} + \frac{\partial y}{\partial a}
+k_2(x)y_{xx}-\mu_{22}(t, a, x)y =-\beta_2(a,x)y(t,0,x),& (t,a,x) \in Q,
\\
  z(t, a, 0)=z(t, a, 1)= y(t, a, 0)=y(t, a, 1)=0, & (t,a) \in Q_{T,A},\\
  z(t,A,x)=y(t,A,x)=0, & (t,x) \in Q_{T,1},\\
z(T,a,x)= z_T(a,x) \in L^2_{{\frac{1}{k_1}}}(Q_{A,1}), & (a,x) \in Q_{A,1},\\
y(T,a,x)= y_T(a,x) \in L^2_{{\frac{1}{k_2}}}(Q_{A,1}), & (a,x) \in Q_{A,1},
\end{cases}
\end{equation}
where $z_T(A,x)=0=y_T(A,x)$ for all $x \in [0,1]$. 
 Thus, the crucial point is to prove such an  inequality via, for example,
 Carleman estimates for \eqref{adjoint}. This is the goal of this section.

\vspace{0.5cm}

\subsection{Preliminary results for Carleman estimates}
In this subsection we will recall some results of \cite{f_JMPA} for a single equation, that  will be crucial to prove Carleman estimates for
the non homogeneous adjoint problem of \eqref{1}.
To this aim, we consider the system
\begin{equation}\label{single}
\begin{cases}
\ds \frac{\partial v}{\partial t} + \frac{\partial v}{\partial a}
+k(x)v_{xx}-\mu(t, a, x)v =f ,& (t,a,x) \in Q,\\
  v(t, a, 0)=v(t, a, 1)=0, & (t,a) \in Q_{T,A},\\
   v(t,A,x)=0, & (t,x) \in Q_{T,1}.
\end{cases}
\end{equation}
If the function $k$ degenerates at $0$ we make the following assumptions:
\begin{Assumptions}\label{BAss01}
\begin{enumerate}
\item
 The function
$k\in C^0[0,1]\bigcap C^2(0,1]$  is such that $k(0)=0$, $k>0$ on
$(0,1]$ and there exist $\varepsilon\in (0,1]$ and $M\in (0,2)$ such that
the function $\displaystyle\frac{xk_x}{k(x)}$
$\in \:L^{\infty}(0,\varepsilon)$,
\small$\displaystyle\frac {xk_x(x)}{k(x)} \le M $, for all $x \in (0,1]$, and
$\displaystyle \left( \frac{xk_x(x)}{k(x)}\right)_{x} \in L^{\infty}(0,\varepsilon)$.
\item $\mu \in C(\bar Q)$ and $\mu \ge 0$ in $Q$.
\end{enumerate} 
\end{Assumptions}
\begin{Assumptions}\label{Assw}
The control set $\omega$ is such that
\begin{equation}\label{omega}
\omega=  (\alpha, \beta)  \subset\subset  (0,1).
\end{equation}
\end{Assumptions}
\noindent Now, let us introduce the weight function
\begin{equation}\label{13}
\varphi(t,a,x):=\Theta(t,a)(p(x) - 2 \|p\|_{L^\infty(0,1)}),
\end{equation} 
where 
\begin{equation}\label{571}
\begin{gathered}
\Theta(t, a)= \frac{1}{t^{4}(T-t)^{4}a^{4}}
\end{gathered}
\end{equation}
and
$\displaystyle p(x):=\int_0^x\frac{y}{k(y)}~e^{Ry^2}dy$, with $R>0$.

Then, defining $ \mathcal H^1_{{\frac{1}{k}}}(0,1)$ and $ \mathcal H^2_{{\frac{1}{k}}}(0,1)$ as $ \mathcal H^1_{{\frac{1}{k_i}}}(0,1)$ and $ \mathcal H^2_{{\frac{1}{k_i}}}(0,1)$, respectively, one has:

\begin{Theorem}\label{Cor2}[see \cite[Theorem 4.1]{f_JMPA}] Assume Hypotheses $\ref{BAss01}$ and $\ref{Assw}$. Then,
there exist two  strictly positive constants $C$ and $s_0$ such that every
solution $v \in L^2\big(Q_{T,A}; \mathcal H^2_{{\frac{1}{k}}}(0,1)\big) \cap H^1\big(0, T; H^1(0,A;\mathcal H^1_{{\frac{1}{k}}}(0,1))\big)$
of 
\eqref{single}
satisfies, for all $s \ge s_0$,
\[
\begin{aligned}
\int_{Q}\left(s \Theta v_x^2
                + s^3\Theta^3\text{\small$\displaystyle\Big(\frac{x}{k}\Big)^2$\normalsize}
                  v^2\right)e^{2s\varphi}dxdadt
&\le
C\left(\int_{Q}\text{\small$\frac{f^{2}}{k}$\normalsize}~dxdadt+ \int_{Q_{T,A}}\int_ \omega \frac{v^2}{k} dx dadt\right).
\end{aligned}\]
\end{Theorem}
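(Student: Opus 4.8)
The plan is to follow the classical Fursikov--Imanuvilov conjugation method, adapted to the degenerate diffusion of Hypothesis \ref{BAss01} and to the extra age transport $\partial_a$. Since $\mu \in C(\bar Q)$ with $\mu \ge 0$, the zeroth-order term $-\mu v$ can be moved into the right-hand side and reabsorbed at the very end (enlarging $C$), so it suffices to treat $v_t + v_a + k v_{xx} = f$. I would set $w := e^{s\varphi}v$, which inherits the spatial boundary conditions $w(t,a,0)=w(t,a,1)=0$ and vanishes at $a=A$; most importantly, $w$ and all relevant derivatives decay as $t\to 0^+,\,T^-$ and $a\to 0^+$, because $\varphi=\Theta\,(p-2\|p\|_{L^\infty(0,1)})\to-\infty$ there (as $p\ge 0$ and $\Theta\to+\infty$). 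Writing the conjugated identity $e^{s\varphi}(v_t+v_a+kv_{xx})=e^{s\varphi}f$ in terms of $w$, I would split the conjugated operator as $P_s^+ w + P_s^- w = e^{s\varphi}f$, where $P_s^+$ collects the formally self-adjoint terms in $L^2_{\frac{1}{k}}(Q)$ (the diffusion $k w_{xx}$, the term $s^2 k\varphi_x^2\,w$, and the zeroth-order $s$-terms in $\varphi_t,\varphi_a,\varphi_{xx}$), while $P_s^-$ collects the skew-adjoint ones ($w_t+w_a$ and the first-order $-2sk\varphi_x w_x$), any leftover zeroth-order term being assigned so as to balance the two parts.

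From $P_s^+ w + P_s^- w = e^{s\varphi}f$ I would take the squared $L^2_{\frac{1}{k}}(Q)$-norm to obtain
\[
\|P_s^+ w\|^2 + \|P_s^- w\|^2 + 2\langle P_s^+ w, P_s^- w\rangle = \|e^{s\varphi}f\|^2_{L^2_{\frac{1}{k}}(Q)}.
\]
The heart of the argument is the computation of the scalar product $\langle P_s^+ w, P_s^- w\rangle$ by integration by parts in each of $t$, $a$ and $x$. After collecting terms, the interior (distributed) contributions should yield, for $s$ large, a positive lower bound of the form $\int_Q\bigl(s\Theta w_x^2 + s^3\Theta^3(\tfrac{x}{k})^2 w^2\bigr)$, up to terms of lower order in $s$ that are absorbed. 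The structural condition $\frac{xk_x}{k}\le M<2$ is precisely what keeps the coefficient of the dominant $s^3\Theta^3(\tfrac{x}{k})^2 w^2$ term positive, and the choice $p(x)=\int_0^x \frac{y}{k(y)}e^{Ry^2}\,dy$ with $R$ large is designed so that the negative contributions arising from $\varphi_{xx}$ and from $\frac{xk_x}{k}$ are dominated by the extra factor $e^{Ry^2}$.

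The delicate point is the boundary terms produced by these integrations by parts. Those at $t=0,T$ and at $a=0$ vanish because $w$ is forced to zero there by the blow-up of $\Theta$; the term at $a=A$ vanishes by the final-age condition $v(t,A,x)=0$; and the spatial term at the non-degenerate endpoint $x=1$ is controlled directly from $w(t,a,1)=0$. The genuinely hard contribution is the boundary term at the degenerate endpoint $x=0$, where $k(0)=0$ and $\varphi_x$ blows up: one must show this integral has the right sign or is negligible, which is exactly where Hypothesis \ref{BAss01} and the weak/strong dichotomy $M\in(0,2)$ enter, through a Hardy--Poincar\'e-type inequality in $L^2_{\frac{1}{k}}(0,1)$ and the precise asymptotics of $p$ and of $x p_x = x^2 e^{Rx^2}/k$ near $0$. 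I expect this boundary analysis at the degeneracy, together with the bookkeeping of the additional $a$-integration by parts (which has no analogue in the purely spatial problem, but which is treated symmetrically with the $t$-integration since $\partial_t+\partial_a$ acts as a single directional derivative), to be the main obstacle.

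The estimate obtained so far controls the weighted energy of $v$ by $\int_Q f^2/k$ plus an interior term of $v$ integrated over all of $(0,1)$. To shrink this interior term to an integral over $\omega$ only (Hypothesis \ref{Assw}), I would introduce a smooth cut-off $\xi$ equal to $1$ on a neighborhood of the degeneracy and supported in a set disjoint from $\omega$: on $\mathrm{supp}\,\xi$ the degenerate estimate applies, while on the complementary non-degenerate region the operator $\partial_t+\partial_a+k(\cdot)\partial_{xx}$ is uniformly parabolic and one may invoke the classical Carleman estimate of \cite{Fursikov} with observation in $\omega$; matching the two weights and absorbing the overlap yields the stated inequality. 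Reverting from $w$ to $v$ and reabsorbing $-\mu v$ completes the argument. The regularity class of $v$ assumed in the statement (consistent with Theorem \ref{esistenza}) justifies all the integrations by parts. Since this is precisely \cite[Theorem 4.1]{f_JMPA} transcribed into the present notation, once the age variable is recognized to enter only through the harmless transport term $\partial_a$, the cited result may in fact be invoked directly.
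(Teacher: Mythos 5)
The paper offers no proof of Theorem \ref{Cor2} at all: it is imported verbatim from \cite[Theorem 4.1]{f_JMPA}, so your closing observation that the cited result may simply be invoked is exactly what the paper does, and your Fursikov--Imanuvilov sketch (conjugation by $e^{s\varphi}$, symmetric/skew-symmetric splitting in $L^2_{\frac{1}{k}}$, sign analysis of the boundary term at the degenerate endpoint, cut-off plus classical Carleman away from $x=0$) is consistent with the strategy of that reference. The one glib spot is the claim that $-\mu v$ is reabsorbed ``at the very end (enlarging $C$)'': pointwise absorption of $\int_Q \mu^2 v^2 e^{2s\varphi}/k$ into the $s^3\Theta^3(x/k)^2$ term fails near $x=0$ since $x^2/k\to 0$ there, and the cited proof instead routes this term through a Hardy--Poincar\'e inequality and absorbs it into $s\int_Q\Theta v_x^2e^{2s\varphi}$, using that $\Theta$ is bounded below.
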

On the other hand, if $k$ degenerates at $1$, we assume in place of Hypothesis \ref{BAss01}, the following one:

\begin{Assumptions}\label{BAss02} 
\begin{enumerate}
\item
The function
$k\in C^0[0,1]\bigcap C^2(0,1]$ is such that $k(1)=0$, $k>0$ on
$(0,1]$ and  there exist $\varepsilon_i\in (0,1]$ and $M\in (0,2)$ such that
the function $\displaystyle\frac{(x-1)k_{x}}{k(x)}$\normalsize\:
$\in L^{\infty}(1-\varepsilon,1)$,
$\displaystyle\frac {(x-1)k_{x}(x)}{k(x)} \le M$, for all $x \in [0,1)$,
and $\displaystyle\left( \frac{(x-1)k_{x}(x)}{k(x)}\right)_{x}\in L^{\infty}(1-\varepsilon,1)$.
\item $\mu \in C(\bar Q)$ and $\mu \ge 0$ in $Q$.
\end{enumerate}
\end{Assumptions}
Then define:
\begin{equation}\label{13new}
\bar\varphi(t,a,x):=\Theta(t,a)(\bar p(x) - 2 \|\bar p\|_{L^\infty(0,1)}),
\end{equation} 
where $\Theta$ is as before and
$\displaystyle \bar p(x):=\int_0^x\frac{y-1}{k(y)}~e^{R(y-1)^2}dy$, with $R>0$.
Hence, the analogous result of Theorem \ref{Cor2} holds:

\begin{Theorem}\label{Cor1}[see \cite[Theorem 4.2]{f_JMPA}] Assume Hypotheses $\ref{Assw}$ and $\ref{BAss02}$. Then,
there exist two  strictly positive constants $C$ and $s_0$ such that every
solution $v \in L^2\big(Q_{T,A}; \mathcal H^2_{{\frac{1}{k}}}(0,1)\big) \cap H^1\big(0, T; H^1(0,A;\mathcal H^1_{{\frac{1}{k}}}(0,1))\big)$ of 
\eqref{single}
satisfies, for all $s \ge s_0$,
\[
\begin{aligned}
\int_{Q}\left(s \Theta v_x^2
                + s^3\Theta^3\text{\small$\displaystyle\Big(\frac{x-1}{k}\Big)^2$\normalsize}
                  v^2\right)e^{2s\varphi}dxdadt
&\le
C\left(\int_{Q}\text{\small$\frac{f^{2}}{k}$\normalsize}~dxdadt+ \int_0^T \int_0^A\int_ \omega \frac{v^2}{k} dx dadt\right).
\end{aligned}\]
\end{Theorem}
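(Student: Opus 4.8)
The plan is to reduce Theorem~\ref{Cor1} to the already established Theorem~\ref{Cor2} by the reflection $x\mapsto 1-x$, which turns degeneracy at the right endpoint into degeneracy at the left one. Concretely, I would introduce $\tilde k(x):=k(1-x)$, $\tilde\mu(t,a,x):=\mu(t,a,1-x)$, $\tilde f(t,a,x):=f(t,a,1-x)$ and $\tilde v(t,a,x):=v(t,a,1-x)$. Since $\tilde v_{xx}(t,a,x)=v_{xx}(t,a,1-x)$ while the first order terms in $t$ and $a$ are unaffected by the reflection, $\tilde v$ solves system~\eqref{single} with $k,\mu,f$ replaced by $\tilde k,\tilde\mu,\tilde f$, and it retains the homogeneous Dirichlet conditions at $x=0,1$ together with the terminal condition at $a=A$.

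Next I would check that Hypothesis~\ref{BAss02} for $k$ is exactly Hypothesis~\ref{BAss01} for $\tilde k$. Indeed $\tilde k(0)=k(1)=0$ and $\tilde k>0$ on $(0,1]$ (using that $k>0$ on $[0,1)$), and with $y=1-x$ one computes $\frac{x\,\tilde k_x(x)}{\tilde k(x)}=\frac{(y-1)k_x(y)}{k(y)}$, so the degeneracy bound $\frac{(x-1)k_x}{k}\le M$ becomes $\frac{x\,\tilde k_x}{\tilde k}\le M$, and the $L^\infty$ and differentiability requirements near the degeneracy point transfer verbatim to a neighbourhood of $0$. The observation region transforms admissibly as well: $\omega=(\alpha,\beta)\subset\subset(0,1)$ becomes $\tilde\omega=(1-\beta,1-\alpha)\subset\subset(0,1)$, so Hypothesis~\ref{Assw} is preserved.

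I would then match the weights. The weight associated with $\tilde k$ through the recipe of Theorem~\ref{Cor2} is $\tilde p(x)=\int_0^x\frac{y}{\tilde k(y)}e^{Ry^2}\,dy$; the substitution $z=1-y$ gives $\tilde p(x)=\bar p(1-x)-\bar p(1)$, so $\tilde p$ and the reflected $\bar p$ differ only by the additive constant $\bar p(1)$. Because $\bar\varphi$ enters the estimate solely through $\Theta(t,a)\bigl(\bar p-2\|\bar p\|_{L^\infty(0,1)}\bigr)$, such a constant shift is absorbed into the normalizing term and alters neither the exponential weight nor the structure of the gradient and potential terms; moreover $\tilde p_x(x)=\frac{x}{\tilde k(x)}e^{Rx^2}$ reflects to $-\frac{y-1}{k(y)}e^{R(y-1)^2}$, so the weighted coefficient $(x/\tilde k)^2$ of Theorem~\ref{Cor2} becomes precisely $((x-1)/k)^2$ after reflecting back. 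Applying Theorem~\ref{Cor2} to $\tilde v$ and changing variables $x\mapsto 1-x$ in every integral (the Jacobian is $1$) then yields the asserted inequality for $v$, the observation integral over $\tilde\omega$ returning to one over $\omega$.

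The only genuinely delicate point is the bookkeeping around the normalization: I must confirm that replacing $\bar p$ by $\tilde p=\bar p(1-\cdot)-\bar p(1)$, and correspondingly $\|\bar p\|_{L^\infty}$ by $\|\tilde p\|_{L^\infty}$, does not corrupt the exponent. This is harmless precisely because the Carleman weight is defined only up to an additive constant in $\bar p$: shifting by a constant and readjusting the $-2\|\cdot\|_{L^\infty}$ term leaves $\bar\varphi$, and hence $e^{2s\bar\varphi}$, invariant. For completeness I note that the source \cite{f_JMPA} instead establishes Theorem~\ref{Cor1} by a direct Carleman computation parallel to the one for Theorem~\ref{Cor2} (conjugating the operator with $e^{s\bar\varphi}$, splitting into self-adjoint and skew-adjoint parts, integrating by parts, and localizing the observation by a cutoff supported in $\omega$); the reflection argument above is merely the conceptual reason why that computation mirrors the degenerate-at-$0$ case term by term.
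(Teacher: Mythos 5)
The paper does not prove Theorem~\ref{Cor1} at all: it is imported verbatim from \cite[Theorem 4.2]{f_JMPA}, just as Theorem~\ref{Cor2} is imported from \cite[Theorem 4.1]{f_JMPA}, so there is no in-paper proof to compare against. Your reflection argument is a legitimate independent derivation, and it is essentially the right explanation of why the two statements are mirror images: the substitution $x\mapsto 1-x$ maps Hypothesis~\ref{BAss02} for $k$ onto Hypothesis~\ref{BAss01} for $\tilde k(x)=k(1-x)$ (your computation $\frac{x\,\tilde k_x(x)}{\tilde k(x)}=\frac{(y-1)k_x(y)}{k(y)}$ is correct), preserves the equation, the boundary and terminal conditions, the function spaces, and the admissibility of $\omega$, and turns $\bigl(x/\tilde k\bigr)^2$ into $\bigl((x-1)/k\bigr)^2$.

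The one place where your bookkeeping is slightly off is the claim that the renormalized weight is exactly invariant. With $\tilde p(x)=\bar p(1-x)-\bar p(1)$ one has $\|\tilde p\|_{L^\infty(0,1)}=-\bar p(1)=\|\bar p\|_{L^\infty(0,1)}$ (since $\bar p$ is decreasing and negative while $\tilde p$ is increasing and positive), hence
\[
\tilde p(x)-2\|\tilde p\|_{L^\infty(0,1)}=\bar p(1-x)+\bar p(1),
\qquad
\bar p(1-x)-2\|\bar p\|_{L^\infty(0,1)}=\bar p(1-x)+2\bar p(1),
\]
so the two exponents differ by $\Theta(t,a)\,\bar p(1)$, which is \emph{not} constant and does not cancel. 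The argument nevertheless survives, because $\bar p(1)<0$ and $\Theta>0$ imply $\bar\varphi(t,a,1-x)\le\Theta(t,a)\bigl(\tilde p(x)-2\|\tilde p\|_{L^\infty(0,1)}\bigr)$ pointwise; the left-hand side you want to bound therefore carries a \emph{smaller} exponential weight than the one delivered by Theorem~\ref{Cor2} applied to $\tilde v$, while the right-hand side contains no weight at all. You should state this as a one-sided comparison of weights rather than as an exact identity. With that correction the reduction is complete, and it buys a genuinely shorter route than the direct Carleman computation carried out in \cite{f_JMPA}, at the cost of being available only once the degenerate-at-$0$ case has been established.
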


\vspace{1cm}
Now, we are ready to prove Carleman estimates for the non homogeneous adjoint problem of \eqref{1}:
\begin{equation}\label{adjoint_f}
\begin{cases}
\ds \frac{\partial z}{\partial t} + \frac{\partial z}{\partial a}
+k_1(x)z_{xx}-\mu_{11}(t, a, x)z+\mu_{21}(t, a, x)y =f ,& (t,a,x) \in Q,  \\
\ds \frac{\partial y}{\partial t} + \frac{\partial y}{\partial a}
+k_2(x)y_{xx}-\mu_{22}(t, a, x)y = h,& (t,a,x) \in Q
\\
  z(t, a, 0)=z(t, a, 1)= y(t, a, 0)=y(t, a, 1)=0, & (t,a) \in Q_{T,A},\\
  z(t,A,x)=y(t,A,x)=0, & (t,x) \in Q_{T,1},
\end{cases}
\end{equation}
where $f \in L^2_{\frac{1}{k_1}}(Q)$ and $h\in L^2_{\frac{1}{k_2}}(Q)$.
 To this aim, we distinguish between the case when $k_i(0)=0$ and $k_i(1)=0$, $i=1,2$.
\subsection{Carleman inequalities and observability inequalities when the degeneracy is at $0$.}\label{sec-3-1}

An immediate consequence of Theorem \ref{Cor2} is  the next $\omega-$local Carleman estimate for \eqref{adjoint_f}. Assume
\begin{Assumptions}\label{AssP} The functions
$k_i\in C^0[0,1]\bigcap C^2(0,1]$, $i=1,2$, 
 are such that $k_i(0)=0$, $k_i>0$ on
$(0,1]$ and 
\begin{enumerate}
\item $k_i$ satisfies Hypothesis  \ref{BAss01}.1, for $i=1,2$,
%\item there exist $\varepsilon_i\in (0,1]$ and $M_i\in (0,2)$ such that
%the functions $\displaystyle\frac{xk_{i,x}}{k_i(x)}$
%$\in \:L^{\infty}(0,\varepsilon_i)$,
%\small$\displaystyle\frac {xk_{i,x}(x)}{k_{i}(x)} \le M_i$, for all $x \in (0,1]$, and
%$\displaystyle \left( \frac{xk_{i,x}(x)}{k_{i}(x)}\right)_{x} \in L^{\infty}(0,\varepsilon_i)$, $i=1,2$.
\item there exists a positive constant $C$ such that $k_1(x)\ge k_2(x)$ for all $x \in [0,1]$.
\end{enumerate}
\end{Assumptions}
\begin{Theorem}\label{Carleman}
Assume Hypotheses $\ref{ratesAss}$, $\ref{Assw}$  and $\ref{AssP}$. Take  $f \in L^2_{\frac{1}{k_1}}(Q)$ and $h\in L^2_{\frac{1}{k_2}}(Q)$. Then,
there exist two strictly positive constants $C$ and $s_0$ such that every
solution $(z,y)\in L^2\big(Q_{T,A}; \mathbb{K}\big) \cap H^1\big(0, T; H^1(0,A;\mathbb{H})\big)$ of \eqref{adjoint_f}
satisfies, for all $s \ge s_0$,
\[
\begin{aligned}
&\int_{Q}\left(s \Theta z_x^2
                + s^3\Theta^3\text{\small$\displaystyle\Big(\frac{x}{k_1}\Big)^2$\normalsize}
                  z^2\right)e^{2s\varphi_1}dxdadt  
 \le  C \int_{Q}(f^2+ y^2)\frac{1}{k_1}dxdadt +C \int_{Q_{T,A}}\int_ \omega \frac{z^2}{k_1} dx dadt
\end{aligned}\]
and
\[
\begin{aligned}
\int_{Q}\left(s \Theta y_x^2
                + s^3\Theta^3\Big(\frac{x}{k_2}\Big)^2
                  y^2\right)e^{2s\varphi_2}dxdadt
&\le
C\left(\int_{Q}\text{\small$\frac{h^{2}}{k_2}$\normalsize}~dxdadt+ \int_{Q_{T,A}}\int_ \omega \frac{y^2}{k_2} dx dadt\right).
\end{aligned}\]
Here \begin{equation}\label{funzioni}
\varphi_i(t,a,x):=\Theta(t,a)(p_i(x) - 2 \|p_i\|_{L^\infty(0,1)}),
\end{equation}
where $\Theta$ is as in \eqref{571} and
$\displaystyle p_i(x):=\int_0^x\frac{y}{k_i(y)}~e^{Ry^2}dy$, with $R>0$, and $i=1,2$.
\end{Theorem}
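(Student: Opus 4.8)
The plan is to exploit the triangular structure of \eqref{adjoint_f} and reduce the cascade estimate to two independent applications of the single-equation Carleman inequality of Theorem \ref{Cor2}, treating the coupling term as an additional source.

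First I would observe that the second equation for $y$ is completely decoupled from $z$: it reads
\[
\frac{\partial y}{\partial t} + \frac{\partial y}{\partial a} + k_2(x) y_{xx} - \mu_{22}(t,a,x) y = h,
\]
together with the boundary and terminal conditions $y(t,a,0) = y(t,a,1) = 0$ and $y(t,A,x) = 0$. This is exactly of the form \eqref{single} with $k = k_2$, $\mu = \mu_{22}$ and source $h$. Since Hypothesis \ref{ratesAss} guarantees $\mu_{22} \in C(\bar Q)$ with $\mu_{22} \ge 0$, and Hypothesis \ref{AssP} forces $k_2$ to satisfy Hypothesis \ref{BAss01}.1, Theorem \ref{Cor2} applies verbatim with the weight $\varphi_2$ (built from $p_2$). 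This yields the second asserted estimate directly, with no further work.

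Next, for the first equation I would move the coupling term to the right-hand side and regard it as part of the source, writing
\[
\frac{\partial z}{\partial t} + \frac{\partial z}{\partial a} + k_1(x) z_{xx} - \mu_{11}(t,a,x) z = f - \mu_{21}(t,a,x) y.
\]
This is again of the form \eqref{single}, now with $k = k_1$, $\mu = \mu_{11}$ and source $\tilde f := f - \mu_{21} y$. Applying Theorem \ref{Cor2} with the weight $\varphi_1$ produces the estimate with $\int_Q \frac{(f-\mu_{21}y)^2}{k_1}\,dx\,da\,dt$ on the right. It then remains to absorb this source term: using the elementary bound $(f - \mu_{21} y)^2 \le 2 f^2 + 2 \mu_{21}^2 y^2$ together with the boundedness of $\mu_{21}\in C(\bar Q)$ on the compact $\bar Q$ (so $\mu_{21}^2 \le \|\mu_{21}\|_{L^\infty(\bar Q)}^2$), the source integral is controlled by $C\int_Q (f^2 + y^2)\frac{1}{k_1}\,dx\,da\,dt$, which is precisely the first asserted estimate. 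Note that the required regularity $z \in L^2(Q_{T,A};\mathcal H^2_{\frac1{k_1}}(0,1)) \cap H^1(0,T;H^1(0,A;\mathcal H^1_{\frac1{k_1}}(0,1)))$ is inherited componentwise from $(z,y)\in L^2(Q_{T,A};\mathbb{K})\cap H^1(0,T;H^1(0,A;\mathbb{H}))$.

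The one point that genuinely requires the structural hypothesis, and which I expect to be the only real obstacle, is the verification that the modified source $\tilde f = f - \mu_{21} y$ indeed belongs to $L^2_{\frac{1}{k_1}}(Q)$, so that Theorem \ref{Cor2} is legitimately applicable and the right-hand side above is finite. While $f \in L^2_{\frac{1}{k_1}}(Q)$ by assumption, the term $\mu_{21} y$ a priori only inherits $\int_Q \frac{y^2}{k_2}\,dx\,da\,dt < \infty$ from the regularity of $y$. Here Hypothesis \ref{AssP}.2, namely $k_1 \ge k_2$ on $[0,1]$, is exactly what is needed: it gives $\frac{1}{k_1} \le \frac{1}{k_2}$, hence $\int_Q \frac{y^2}{k_1}\,dx\,da\,dt \le \int_Q \frac{y^2}{k_2}\,dx\,da\,dt < \infty$, and $\mu_{21} y \in L^2_{\frac{1}{k_1}}(Q)$ follows from the boundedness of $\mu_{21}$. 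Once this membership is secured, both estimates follow immediately, since all the delicate weighted pointwise computations are already encoded in Theorem \ref{Cor2}.
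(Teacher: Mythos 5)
Your proposal is correct and follows essentially the same route as the paper: decouple the system, apply Theorem \ref{Cor2} to the $y$-equation directly and to the $z$-equation with source $F := f - \mu_{21}y$, then bound $F^2/k_1$ by $C(f^2+y^2)/k_1$. Your additional observation that Hypothesis \ref{AssP}.2 (i.e.\ $k_1\ge k_2$) is what guarantees $\mu_{21}y\in L^2_{\frac{1}{k_1}}(Q)$ is a point the paper leaves implicit, but it does not change the argument.
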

\begin{proof} Observe that \eqref{adjoint_f}
can be rewritten as
\begin{equation}\label{adjoint_1}
\begin{cases}
\ds \frac{\partial z}{\partial t} + \frac{\partial z}{\partial a}
+k_1(x)z_{xx}-\mu_{11}(t, a, x)z =f-\mu_{21}(t, a, x)y=:F ,& (t,a,x) \in Q,  \\
  z(t, a, 0)=z(t, a, 1)=0, & (t,a) \in Q_{T,A},\\
  z(t,A,x)=0, & (t,x) \in Q_{T,1},
\end{cases}
\end{equation}
where $y$ satisfies
\begin{equation}\label{adjoint_2}
\begin{cases}
\ds \frac{\partial y}{\partial t} + \frac{\partial y}{\partial a}
+k_2(x)y_{xx}-\mu_{22}(t, a, x)y =h,& (t,a,x) \in Q,
\\
 y(t, a, 0)=y(t, a, 1)=0, & (t,a) \in Q_{T,A},\\
y(t,A,x)=0, & (t,x) \in Q_{T,1}.
\end{cases}
\end{equation}
Hence, the inequality for $y$ follows immediately by Theorem \ref{Cor2} applied to \eqref{adjoint_2}. On the other hand, the estimante on $z$ follows by
Theorem \ref{Cor2} applied to \eqref{adjoint_1}. Indeed, we have
\[
\begin{aligned}
\int_{Q}\left(s \Theta z_x^2
                + s^3\Theta^3\Big(\frac{x}{k_1}\Big)^2
                  z^2\right)e^{2s\varphi_1}dxdadt
&\le
C\left(\int_{Q}\frac{F^{2}}{k_1} dxdadt+ \int_{Q_{T,A}} \int_ \omega \frac{z^2}{k_1} dx dadt\right)\\
& \le C \left(\int_{Q}\left(\frac{f^{2}}{k_1} + \frac{y^{2}}{k_1}\right) dxdadt+ \int_{Q_{T,A}}\int_ \omega \frac{z^2}{k_1} dx dadt\right).
\end{aligned}\]
Thus, the thesis follows.

\end{proof}
\begin{Remark}\label{remarkultimo}
Observe that the results of Theorems  \ref{Cor2} , \ref{Cor1} and \ref{Carleman} still hold true if we substitute the domain $(0,T)\times (0,A)$ with a general domain $(T_\gamma,T_\beta)\times (\gamma,A)$ where the required assumptions are satisfied. In this case, in place of the function $\Theta$ defined in \eqref{571}, we have to consider the weight function
\begin{equation}\label{thetatilde}
\tilde \Theta(t,a):= \frac{1}{(t-T_\gamma)^4 (T_\beta-t)^4(a-\gamma)^4}.
\end{equation}
\end{Remark}

Using the $\omega-$local Carleman estimates given in Theorem \ref{Carleman}, one can prove the next observability inequality for the adjoint system \eqref{adjoint}.
From now on, we will make an additional assumption on the birth  rates $\beta_i$, $i=1,2$:
\begin{Assumptions}\label{conditionbeta} Assume that there exist
$\bar a_i  < \Gamma:= \min \{T,A\}$
 such that
\begin{equation}\label{conditionbeta1}
\beta_i(a, x)=0 \;  \text{for all $(a, x) \in [0, \bar a_i]\times [0,1]$},  \quad i=1,2.
\end{equation}
\end{Assumptions} 
Observe that Hypothesis \ref{conditionbeta} is realistic, since $\bar a_i$, $i=1,2$, are the minimal age in which the female of the population become fertile, thus it is natural that before $\bar a_i$ there are no newborns.
\begin{Theorem}\label{Theorem4.4} Assume Hypotheses $\ref{ratesAss},$ $\ref{Assw}$, $\ref{AssP}$ and  \ref{conditionbeta}. Then, for every $\delta \in (0,A)$, 
there exists a strictly positive constant $C=C(\delta)$ such that every
solution $(z,y)\in L^2\big(Q_{T,A}; \mathbb{K}\big) \cap H^1\big(0, T; H^1(0,A;\mathbb{H})\big)$ of \eqref{adjoint}
satisfies
\begin{equation}\label{stima1}
\begin{aligned}
&\int_{Q_{A,1}} \frac{1}{k_2} y^2(T-\bar a_2, a,x) dxda \le C \left(\int_0^{\bar a_2} \int_0^1\frac{y_T^2(a,x) }{k_2}dxda + \int_{Q_{T,A}}\!\!\int_ \omega\frac{1}{k_2} y^2  dx dadt+ \int_0^T \int_0^\delta\!\!\int_ 0^1 \frac{1}{k_2} y^2 dx dadt\right)\end{aligned}\end{equation}
and
\begin{equation}\label{stima2}
\begin{aligned}
\int_{Q_{A,1}} \frac{z^2}{k_1} (T-\bar a_1,a,x) dxda& \le C\left(  \int_{Q_{T,A}}\!\!\int_ \omega\left( \frac{1}{k_1} z^2 + \frac{1}{k_2} y^2\right)dx dadt+ \int_0^T \int_0^\delta\!\!\int_ 0^1\left( \frac{1}{k_1} z^2 + \frac{1}{k_2} y^2\right)dx dadt\right)\\
&+ C\left(\int_0^{\bar a_1} \int_0^1\frac{z_T^2(a,x) }{k_1}dxda +\int_0^{\bar a_2} \int_0^1\frac{y_T^2(a,x) }{k_2}dxda\right)
\end{aligned}\end{equation}
Moreover, if $y_T(a,x)=0$ for all $(a,x) \in (0,  \bar a_2) \times (0,1)$ and  $z_T(a,x)=0$ for all $(a,x) \in (0,  \bar a_1) \times (0,1)$, one has
\[
\begin{aligned}
&\int_{Q_{A,1}} \frac{1}{k_2} y^2(T-\bar a_2, a,x) dxda \le C\int_{Q_{T,A}}\!\!\int_ \omega\frac{1}{k_2} y^2  dx dadt+ C\int_0^T \int_0^\delta\!\!\int_ 0^1 \frac{1}{k_2} y^2 dx dadt
\end{aligned}
\]
and
\[\begin{aligned}
 \int_{Q_{A,1}} \frac{z^2}{k_1} (T-\bar a_1,a,x) dxda &\le C\left( \int_{Q_{T,A}}\!\!\int_ \omega\left( \frac{1}{k_1} z^2 + \frac{1}{k_2} y^2\right)dx dadt+ \int_0^T \int_0^\delta\!\!\int_ 0^1\left( \frac{1}{k_1} z^2 + \frac{1}{k_2} y^2\right)dx dadt\right).
\end{aligned}\]
\end{Theorem}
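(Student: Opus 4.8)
The plan is to exploit the triangular (cascade) structure of \eqref{adjoint}: the $y$–equation does not involve $z$, so I would first prove the observability inequality \eqref{stima1} for the decoupled $y$–component and then deduce \eqref{stima2} for $z$, treating the coupling term $\mu_{21}y$ as a known source and feeding \eqref{stima1} back into the estimate. In both steps the mechanism is the same: turn the $\omega$–local Carleman estimate of Theorem \ref{Carleman} into a bound on a genuine time–age slice, while the nonlocal birth term is absorbed by means of the support condition of Hypothesis \ref{conditionbeta}.

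\emph{Step 1: observability for $y$.} The second equation of \eqref{adjoint} is of the form \eqref{adjoint_2} with source $h=-\beta_2(a,x)y(t,0,x)$, which vanishes on the low ages $a\in(0,\bar a_2)$ by \eqref{conditionbeta1}. I would apply the $y$–part of Theorem \ref{Carleman} (equivalently Theorem \ref{Cor2}) to control the weighted bulk quantity $\int_Q\big(s\Theta y_x^2+s^3\Theta^3(x/k_2)^2y^2\big)e^{2s\varphi_2}$ by $\int_Q h^2/k_2$ and by the $\omega$–observation $\int_{Q_{T,A}}\int_\omega y^2/k_2$. Since the weight $\Theta$ of \eqref{571} blows up as $a\to0$ and as $t\to0,T$, the factor $e^{2s\varphi_2}$ vanishes there and the Carleman estimate yields no information on a neighbourhood of these singular sets; the low-age layer $a\in(0,\delta)$ that is thereby lost is exactly what reappears as $\int_0^T\int_0^\delta\int_0^1 y^2/k_2$ on the right-hand side of \eqref{stima1}.

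\emph{Step 2: from the weighted bulk to the slice.} To convert the weighted interior bound into a clean estimate of the slice value $\int_{Q_{A,1}}y^2(T-\bar a_2,a,x)/k_2$ I would use a dissipation (energy) argument: multiplying the equation by $y/k_2$, integrating over $x\in(0,1)$ and integrating by parts (using $\mu_{22}\ge0$), one gets a differential identity for $\int_0^1 y^2/k_2$ in the characteristic direction $\partial_t+\partial_a$, which propagates the value on the slice $t=T-\bar a_2$ to the region already controlled in Step 1, at the price of lower-order terms handled by Young's inequality and a Caccioppoli-type bound for $y_x$. The region adjacent to $t=T$, where the Carleman weight is useless, is governed by a \emph{source-free} equation precisely on the low ages $a\in(0,\bar a_2)$ where $\beta_2\equiv0$; there the solution is estimated directly by the final datum, which is the origin of the term $\int_0^{\bar a_2}\int_0^1 y_T^2/k_2$. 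Localizing these regions by cut-off functions in $(t,a)$ and collecting the contributions yields \eqref{stima1}.

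\emph{Step 3: observability for $z$, and the main obstacle.} I would rewrite the first equation of \eqref{adjoint} as the single equation \eqref{adjoint_1} with source $F=-\mu_{21}y-\beta_1 z(t,0,x)$ and repeat Steps 1--2 for $z$, now with the weight $\varphi_1$. The new feature is the coupling term: since $\mu_{21}$ is bounded, it contributes $\int_Q(\mu_{21}y)^2/k_1\le C\int_Q y^2/k_1\le C\int_Q y^2/k_2$, where the last inequality is exactly where Hypothesis \ref{AssP}.2 ($k_1\ge k_2$, hence $1/k_1\le1/k_2$) is used; the resulting bulk integral of $y^2/k_2$ is then reduced, via the energy argument of Step 2 together with \eqref{stima1}, to the $y$–observation terms, which is why $\int_{Q_{T,A}}\int_\omega y^2/k_2$, $\int_0^T\int_0^\delta\int_0^1 y^2/k_2$ and $\int_0^{\bar a_2}\int_0^1 y_T^2/k_2$ all appear on the right-hand side of \eqref{stima2}. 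The birth term $\beta_1 z(t,0,x)$ and the slice at $T-\bar a_1$ are treated as in Step 2. I expect the delicate point to be not the Carleman estimate itself --- which is furnished by Theorem \ref{Carleman} --- but this age-structured passage from the degenerate weighted inequality to a bound on the concrete slice $t=T-\bar a_i$ carried out in the presence of the nonlocal birth term; it is this step that forces the low-age integrals and the low-age final data to appear and that requires the combined use of Hypothesis \ref{conditionbeta}, the dissipation estimate and the $(t,a)$ cut-offs. Finally, the two simplified inequalities are immediate: under $y_T\equiv0$ on $(0,\bar a_2)\times(0,1)$ and $z_T\equiv0$ on $(0,\bar a_1)\times(0,1)$ the corresponding final-data integrals vanish, so \eqref{stima1} and \eqref{stima2} collapse to the stated estimates.
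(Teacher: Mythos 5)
Your plan follows essentially the same route as the paper: decouple the cascade, apply the $\omega$-local Carleman estimate of Theorem \ref{Carleman} on the age range $a\in(\delta,A)$ (keeping the low-age layer on the right-hand side), pass from the weighted bulk to the slice $t=T-\bar a_i$ by a dissipation estimate obtained from multiplying by $-e^{2\varsigma t}y/k_2$ (resp.\ $-e^{2\varsigma t}z/k_1$) and integrating over $(T_i,t)\times(0,A)\times(0,1)$, use Hypothesis \ref{conditionbeta} to represent the renewal traces by the final data, and use $k_1\ge k_2$ to absorb the coupling term. So the architecture is right.

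Three points in your sketch are, however, underestimated and would need to be repaired before this becomes a proof. First, the passage from $\int_\delta^A\int_0^1 y^2/k_2$ to the Carleman left-hand side is done in the paper via the Hardy--Poincar\'e inequality $\int_0^1 y^2/k_2\le C\int_0^1 y^2/x^2\le C\int_0^1 y_x^2$ (estimate \eqref{terminenuovo11}), i.e.\ one bounds $y^2/k_2$ \emph{by} $y_x^2$; your ``Caccioppoli-type bound for $y_x$'' goes in the opposite direction and is not the tool needed here. Second, the nonlocal traces are controlled not merely by the support condition on $\beta_i$ but by the explicit implicit formulas along characteristics, $y(t,0,\cdot)=S_2(T-t)y_T(T-t,\cdot)$ for $t\ge T-\bar a_2$ and, crucially, $z(t,0,\cdot)=S_1(T-t)z_T(T-t,\cdot)+\int_0^{T-t}S_1(s)\mu_{21}(s+t,s,\cdot)y(s+t,s,\cdot)\,ds$ for $t\ge T-\bar a_1$: because of the coupling, the $z$-trace carries an integral of $y$ along characteristics, which produces the extra term $\int_{T-\bar a_1}^T\int_0^1\int_0^{T-t}y^2(s+t,s,x)/k_1\,ds\,dx\,dt$ that must be separately reduced to $\int_0^{\bar a_2}\int_0^1 y_T^2/k_2$; saying that ``$\beta_1 z(t,0,x)$ is treated as in Step 2'' hides this term entirely. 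Third, the time windows on which the energy inequality is integrated and the Carleman estimate is localized must be chosen inside $[T-\bar a_2,T]$ whenever the formula for $y(t,0,\cdot)$ is invoked, which forces the case distinction $\bar a_1\le\bar a_2$ versus $\bar a_1>\bar a_2$ carried out in the paper (windows of the form $[T-\bar a_1/2,T-\bar a_1/4]$ in the first case, $[T-\bar a_2/4,T-\bar a_2/8]$ in the second); your sketch does not address why the argument for $z$ remains valid when $\bar a_1>\bar a_2$.
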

\begin{proof}
As before, \eqref{adjoint}  can be rewritten as
\begin{equation}\label{adjoint_1'}
\begin{cases}
\ds \frac{\partial z}{\partial t} + \frac{\partial z}{\partial a}
+k_1(x)z_{xx}-\mu_{11}(t, a, x)z =-\mu_{21}(t, a, x)y - \beta_1(a,x)z(t,0,x) =:f ,& (t,a,x) \in Q,  \\
  z(t, a, 0)=z(t, a, 1)=0, & (t,a) \in Q_{T,A},\\
  z(t,A,x)=0, & (t,x) \in Q_{T,1},\\
z(T,a,x)= z_T(a,x), & (a,x) \in Q_{A,1},
\end{cases}
\end{equation}
where $y$ satisfies
\begin{equation}\label{adjoint_2'}
\begin{cases}
\ds \frac{\partial y}{\partial t} + \frac{\partial y}{\partial a}
+k_2(x)y_{xx}-\mu_{22}(t, a, x)y =-\beta_2(a,x)y(t,0,x)=:h,& (t,a,x) \in Q,
\\
 y(t, a, 0)=y(t, a, 1)=0, & (t,a) \in Q_{T,A},\\
y(t,A,x)=0, & (t,x) \in Q_{T,1},\\
y(T,a,x)= y_T(a,x), & (a,x) \in Q_{A,1}.
\end{cases}
\end{equation}
Setting,
\begin{equation}\label{tildeT}
T_i:= T-\bar a_i, \quad i=1,2.
\end{equation}
using the method of characteristic lines, the assumption on $\beta_i$, $i=1,2$, and the fact that $z(t,A,x)=y(t,A,x)=0$ for all $(t,x) \in Q_{T,1}$, one has, 
as in \cite[Theorem 4.4]{f_JMPA}, the following implicit formulas for
 every solution $z$ and $y$ of \eqref{adjoint_1'} and \eqref{adjoint_2'}, respectively:
\begin{equation}\label{implicitformula2}
z(t,a, \cdot)= S_1(T-t) z_T(T+a-t, \cdot)- \int_a^{T+a-t}S_1(s-a)G(s,t,a,\cdot)ds,
\end{equation}
where $G(s,t,a,x): =-\mu_{21}(s+t-a, s,x) y(s+t-a, s,x)$, 
if $t \ge   T_1+ a$ (observe that in this case $T+a-t \le \bar a_1$) and, setting $\Gamma:= T-A$,
\begin{equation}\label{implicitformula3}
z(t,a, \cdot)\!=\!\begin{cases}
S_1(T-t) z_T(T+a-t, \cdot)\!+\!\int_a^{T+a-t}\!S_1(s-a)\!\left(\beta_1(s, \cdot)z(s+t-a, 0, \cdot)- G(s,t,a,\cdot)\right) ds, & \Gamma +a\le t  \\
\int_a^AS_1(s-a)\beta_1(s, \cdot)z(s+t-a, 0, \cdot) ds, & \Gamma +a \ge t,
\end{cases}
\end{equation}
otherwise. Moreover
\begin{equation}\label{implicitformula}
y(t, a,\cdot)= S_2(T-t) y_T(T+a-t, \cdot),
\end{equation}
if $t \ge   T_2+ a$ (observe that in this case $T+a-t \le \bar a_2$) and
\begin{equation}\label{implicitformula1}
y(t,a, \cdot)=\begin{cases}
S_2(T-t) y_T(T+a-t, \cdot)\!+\int_a^{T+a-t}S_2(s-a)\beta_2(s, \cdot)y(s+t-a, 0, \cdot) ds, & \Gamma +a \le t \\
\int_a^AS_2(s-a)\beta_2(s, \cdot)y(s+t-a, 0, \cdot) ds, & \Gamma +a \ge t,
\end{cases}
\end{equation}
otherwise. Here  $(S_i(t))_{t \ge0}$ are the semigroups generated by the operators $A_i-\mu_{ii} Id$, $i=1,2$,  ($Id$ is the identity operator).
In particular, it results
\begin{equation}\label{z(0)}
z(t,0, \cdot):= S_1(T-t) z_T(T-t, \cdot)+ \int_0^{T-t}S_1(s)\mu_{21}(s+t, s,\cdot) y(s+t, s,\cdot)ds, 
\end{equation}
if $t \ge T-\bar a_1$ and
\begin{equation}\label{y(0)}
y(t,0, \cdot):= S_2(T-t) y_T(T-t, \cdot),
\end{equation}
if $t \ge T-\bar a_2$.
\vspace{0.3cm}

Now, we distinguish between the two cases $\bar a_1 \le \bar a_2$ and $\bar a_1>\bar a_2$.

{\underline{ $\bar a_1 \le \bar a_2$:}}
Using the same idea of \cite{f_JMPA}, we now prove that there exists a positive constant $C$ such that:
\begin{equation}\label{t=011}
 \int_{Q_{A,1}} \frac{y^2}{k_2} (T_2,a,x) dxda  \le C \int_{T-\frac{\bar a_2}{2}}^{T-\frac{\bar a_2}{4}} \int_{Q_{A,1}}\frac{y^2}{k_2} dxdadt,
\end{equation}
and
\begin{equation}\label{t=011z}
 \int_{Q_{A,1}} \frac{z^2}{k_1} (T_1,a,x) dxda  \le C\ds  \int_{T-\frac{\bar a_1}{2}}^{T-\frac{\bar a_1}{4}} \int_{Q_{A,1}}\left(\frac{z^2}{k_1}+ \frac{y^2}{k_2}\right) dxdadt.
\end{equation}

Indeed, define, for $\varsigma >0$, the functions $w= e^{\varsigma t }y$ and $W= e^{\varsigma t }z$. Then,
$w$ and $W$ satisfy, respectively, the problems
\begin{equation}\label{h=0'}
\begin{cases}
\ds \frac{\partial w}{\partial t} + \frac{\partial w}{\partial a}
+k_2(x)w_{xx}-(\mu_{22}(t, a, x)+ \varsigma) w =-\beta_2(a,x)w(t,0,x),& (t,x,a) \in  \tilde Q_2,
\\[5pt]
w(t,a,0)=w(t,a,1) =0, &(t,a) \in \tilde Q_{2,T,A},\\
  w(T,a,x) = e^{\varsigma T}y_T(a,x), &(a,x) \in Q_{A,1}, \\
  w(t,A,x)=0, & (t,x) \in \tilde Q_{2,T,1},
\end{cases}
\end{equation}
and
\begin{equation}\label{h=0'new}
\begin{cases}
\ds \frac{\partial W}{\partial t} + \frac{\partial W}{\partial a}
+k_1(x)W_{xx}-(\mu_{11}(t, a, x)+ \varsigma) W =-\mu_{21}w-\beta_1(a,x)W(t,0,x),& (t,x,a) \in  \tilde Q_1,
\\[5pt]
W(t,a,0)=W(t,a,1) =0, &(t,a) \in \tilde Q_{1,T,A},\\
  W(T,a,x) = e^{\varsigma T}z_T(a,x), &(a,x) \in Q_{A,1}, \\
  W(t,A,x)=0, & (t,x) \in \tilde Q_{1,T,1},
\end{cases}
\end{equation}
where $\tilde Q_i:= (T_i, T) \times Q_{A,1}$, $\tilde Q_{i,T,A}:= (T_i, T) \times (0,A)$ and $\tilde Q_{i, T,1}:= (T_i,T)\times (0,1)$, $i=1,2$.

Multiplying the equations of  \eqref{h=0'} and \eqref{h=0'new} by $\ds-\frac{w}{k_2}$ and  $\ds-\frac{W}{k_1}$ and integrating by parts on $Q_{2,t}:= (T_2,t) \times(0,A) \times(0,1)$ and $Q_{1,t}:= (T_1,t) \times(0,A) \times(0,1)$, respectively,  it results
\[
\begin{aligned}
&-\frac{1}{2}\int_{Q_{A,1}}\frac{1}{k_2} w^2(t,a,x) dxda + \frac{e^{2\varsigma T_2}}{2} \int_{Q_{A,1}} \frac{1}{k_2}y^2( T_2,a,x) dxda + \frac{1}{2} \int_{T_2}^t\int_0^1 \frac{1}{k_2} w^2(s,0,x) dx ds\\
&+ \varsigma \int_{Q_{2,t}}\frac{1}{k_2} w^2(s,a,x) dxdads \le \int_{Q_{2,t}}\frac{1}{k_2} \beta_2 w(s,0,x)wdxdads
\\
& \le \|\beta_2\|_{L^\infty(Q)}\frac{1}{\epsilon}\int_{Q_{2,t}}\frac{1}{k_2} w^2dxdads+ \epsilon A\|\beta_2\|_{L^\infty(Q)} \int_{T_2}^t\int_0^1\frac{1}{k_2}  w^2(s,0,x)dxds,
\end{aligned}
\]
and
\[
\begin{aligned}
&-\frac{1}{2}\int_{Q_{A,1}}\frac{1}{k_1} W^2(t,a,x) dxda + \frac{e^{2\varsigma   T_1}}{2} \int_{Q_{A,1}} \frac{1}{k_1}z^2(T_1,a,x) dxda + \frac{1}{2} \int_{T_1}^t\int_0^1 \frac{1}{k_1} W^2(s,0,x) dx ds\\
&+ \varsigma \int_{Q_{1,t}}\frac{1}{k_1} W^2(s,a,x) dxdads \le \int_{Q_{1,t}}\frac{1}{k_1} \beta_1 W(s,0,x)Wdxdads+ \int_{Q_{1,t}}\frac{\mu_{21}}{k_1} wWdxdads
\\
& \le \frac{1}{\epsilon}\left(\|\beta_1\|_{L^\infty(Q)}+ 1  \right)\int_{Q_{1,t}}\frac{1}{k_1} W^2dxdads+ \epsilon A\|\beta_1\|_{L^\infty(Q)} \int_{T_1}^t\int_0^1\frac{1}{k_1}  W^2(s,0,x)dxds\\
&+ \epsilon \|\mu_{21}^2\|_{L^\infty(Q)}\int_{Q_{1,t}}\frac{1}{k_1} w^2dxdads,
\end{aligned}
\]
for $\epsilon >0$. Choosing   $\ds\epsilon =\min\left\{ \frac{1}{2\|\beta_1\|_{L^\infty(Q)}A},  \frac{1}{2\|\beta_2\|_{L^\infty(Q)}A} \right\}$ and $\ds \varsigma =\max\left\{\frac{\|\beta_1\|_{L^\infty(Q)}+ 1}{\epsilon}, \frac{ \|\beta_2\|_{L^\infty(Q)}}{\epsilon}\right\}$, we have
\begin{equation}\label{Eq1}
\begin{aligned}
 \int_{Q_{A,1}}  \frac{1}{k_2} y^2(T_2,a,x) dxda  \le C\int_{Q_{A,1}}\frac{1}{k_2} w^2(t,a,x) dxda  \le C \int_{Q_{A,1}}\frac{1}{k_2}  y^2(t,a,x) dxda.
\end{aligned}
\end{equation}
and
\begin{equation}\label{Eq2}
\begin{aligned}
 \int_{Q_{A,1}}  \frac{1}{k_1} z^2(T_1,a,x) dxda  &\le C\int_{Q_{A,1}}\frac{1}{k_1} W^2(t,a,x) dxda + \epsilon \|\mu_{21}^2\|_{L^\infty(Q)}\int_{Q_{1,t}}\frac{1}{k_1} w^2dxdads\\
& \le C \left(\int_{Q_{A,1}}\frac{1}{k_1}  z^2(t,a,x) dxda+ \int_{Q_{1,t}}\frac{1}{k_2} y^2dxdads\right).
\end{aligned}
\end{equation}
Then, integrating \eqref{Eq1} over $\ds \left[T-\frac{\bar a_2}{2}, T-\frac{\bar a_2}{4} \right]$ and \eqref{Eq2} over $\ds \left[T-\frac{\bar a_1}{2}, T-\frac{\bar a_1}{4} \right]$,
 we have \eqref{t=011} and \eqref{t=011z}, respectively.
\vspace{0,5cm}

Now, take $\delta \in (0, A)$. 
By \eqref{t=011}, we have
\begin{equation}\label{t=0y}
\begin{aligned}
& \int_{Q_{A,1}} \frac{1}{k_2} y^2(T_2, a,x) dxda \le C   \int_{T-\frac{\bar a_2}{2}}^{T-\frac{\bar a_2}{4}}\left(\int_0^\delta + \int_\delta^A \right)\int_0^1 \frac{1}{k_2} y^2dxdadt,
\end{aligned}
\end{equation}
Consider the term $\ds \int_{T-\frac{\bar a_2}{2}}^{T-\frac{\bar a_2}{4}} \int_\delta^A\int_0^1\frac{1}{k_2} y^2(t,a,x) dxdadt$. By the Hardy-Poincar\'e inequality (see, for example, \cite{b}), one has 
\begin{equation}\label{terminenuovo11}
\begin{aligned}
 \int_0^1\frac{1}{k_2} y^2dx
& 
\le C\int_0^1 \frac{1}{x^2} y^2dx\le C \int_0^1 y_x^2 dx,
\end{aligned}\end{equation}
for a  strictly positive constant $C.$  Hence,
\begin{equation}\label{stimabo}
\begin{aligned}
\int_{T-\frac{\bar a_2}{2}}^{T-\frac{\bar a_2}{4}} \int_\delta^A\int_0^1 \frac{1}{k_2} y^2(t,a,x) dxdadt &\le C \int_{T-\frac{\bar a_2}{2}}^{T-\frac{\bar a_2}{4}} \int_\delta^A\int_0^1\tilde \Theta_2 y_x^2e^{2s\tilde\varphi_2} dxdadt \\
& +C \int_{T-\frac{\bar a_2}{2}}^{T-\frac{\bar a_2}{4}}  \int_\delta^A\int_0^1 \tilde \Theta_2^3\left( \frac{x}{k_2}\right)^2y^2e^{2s\tilde\varphi_2} dxdadt,
\end{aligned}
\end{equation}
where $\tilde \Theta_2$ is defined in \eqref{thetatilde} with $T_\gamma:= T-\bar a_2$, $T_\beta:= T$, $\gamma =0$ and $\tilde \varphi_2$ is the function  $\varphi_2$ associated to $\tilde \Theta_2$  according to \eqref{13}.
Thus, by Theorem \ref{Cor2} (or Theorem \ref{Carleman}) applied to $\bar Q_2:=(T-\bar a_2, T) \times (0, A)\times (0,1)$ and Remark \ref{remarkultimo},
\[
\int_{T-\frac{\bar a_2}{2}}^{T-\frac{\bar a_2}{4}}   \int_\delta^A\int_0^1 \frac{1}{k_2} y^2(t,a,x) dxdadt \le
C\left(\int_{\bar Q_2}\frac{h^{2}}{k_2}dxdadt+ \int_0^T \int_0^A\int_ \omega \frac{y^2}{k_2} dx dadt\right),
\]
where, in this case, $h(t,a,x):=-\beta_2(a,x)y(t,0,x)$.
Hence, by \eqref{y(0)}, since $t \ge T-\bar a_2$,
\begin{equation}\label{t=01}
\begin{aligned}
\int_{T-\frac{\bar a_2}{2}}^{T-\frac{\bar a_2}{4}}\!\! \int_\delta^A\!\!\int_0^1 \frac{1}{k_2} y^2dxdadt& \le C\|\beta_2\|^2_{L^\infty(Q)}\!\left(\int_{\bar Q_2}\frac{1}{k_2} y^2(t,0,x)dxdadt+ \int_0^T \int_0^A\!\!\int_ \omega \frac{1}{k_2} y^2 dx dadt\right),
\\
&\le C \left(\int_{\bar Q_2}\frac{1}{k_2} y^2_T(T-t,x)dxdt + \int_0^T \int_0^A\!\!\int_ \omega \frac{1}{k_2} y^2 dx dadt\right)
\\
&\le C \left(\int_0^{\bar a_2} \int_0^1\frac{y_T^2(\sigma,x) }{k_2}dxd\sigma + \int_0^T \int_0^A\!\!\int_ \omega \frac{1}{k_2} y^2 dx dadt\right),
\end{aligned}
\end{equation}
for a  strictly positive constant $C$.  By \eqref{t=0y} and \eqref{t=01}, \eqref{stima1} follows.

Now, we will prove \eqref{stima2}.
 Again, by \eqref{t=011z},
\begin{equation}\label{t=0z}
\begin{aligned}
 \int_{Q_{A,1}} \frac{z^2}{k_1} (T_1,a,x) dxda \le C  \int_{T-\frac{\bar a_1}{2}}^{T-\frac{\bar a_1}{4}}\left(\int_0^\delta + \int_\delta^A \right)\int_0^1\left(\frac{z^2}{k_1}+\frac{y^2}{k_2}\right) dxdadt.
\end{aligned}
\end{equation}
Proceeding as before, one can prove
\[
\begin{aligned}
\int_{T-\frac{\bar a_1}{2}}^{T-\frac{\bar a_1}{4}}\!\! \int_\delta^A\!\!\int_0^1 \frac{1}{k_2} y^2dxdadt& \le \int_{T-\frac{\bar a_2}{2}}^{T-\frac{\bar a_1}{4}}\!\! \int_\delta^A\!\!\int_0^1 \frac{1}{k_2} y^2dxdadt \\
&\le C \left(\int_0^{\bar a_2} \int_0^1\frac{y_T^2(a,x) }{k_2}dxda + \int_0^T \int_0^A\!\!\int_ \omega \frac{1}{k_2} y^2 dx dadt\right).
\end{aligned}
\]
Now, consider  $\ds \int_{T-\frac{\bar a_1}{2}}^{T-\frac{\bar a_1}{4}} \int_\delta^A\int_0^1\frac{1}{k_1} z^2dxdadt$. As before,  by Theorem \ref{Carleman} applied to $\bar Q_1:=\left(T-\ds\frac{3}{4}\bar a_1, T- \ds\frac{\bar a_1}{8}\right) \times (0, A)\times (0,1)$ and Remark \ref{remarkultimo}, replacing  $\tilde \varphi_2$,  $\tilde \Theta_2$ by $\tilde \varphi_1$, $\tilde \Theta_1$, respectively, where  $\tilde \varphi_1$  is the function  $\varphi_1$ associated to $\tilde \Theta_1$  according to \eqref{13}, and $\tilde \Theta_1$  is defined in \eqref{thetatilde} with $T_\gamma:= T-\ds\frac{3}{4}\bar a_1$, $T_\beta:=T- \ds\frac{\bar a_1}{8}$, $\gamma =0$, one has
\[
\int_{T-\frac{\bar a_1}{2}}^{T-\frac{\bar a_1}{4}}   \int_\delta^A\int_0^1 \frac{1}{k_1} z^2(t,a,x) dxdadt \le
C\left(\int_{\bar Q_1}\frac{f^{2}}{k_1}dxdadt+\int_{\bar Q_1}\frac{y^2}{k_1}dxdadt + \int_0^T \int_0^A\int_ \omega \frac{z^2}{k_1} dx dadt\right),
\]
where, in this case, $f(t,a,x):=- \beta_1(a,x)z(t,0,x)$. Hence, by \eqref{z(0)},
\[
\begin{aligned}
\int_{T-\frac{\bar a_1}{2}}^{T-\frac{\bar a_1}{4}}   \int_\delta^A\int_0^1 \frac{1}{k_1} z^2(t,a,x) dxdadt &\le C\left(\int_{\bar Q_1}\frac{y^2}{k_1}dxdadt + \int_{\bar Q_1}\frac{z^2(t,0,x)}{k_1}dxdadt  + \int_0^T \int_0^A\int_ \omega \frac{z^2}{k_1} dx dadt\right)\\
&\le  C\left(\int_{\bar Q_1}\frac{y^2}{k_2}dxdadt + \int_0^{\bar a_1} \int_0^1\frac{z_T^2(a,x) }{k_1}dxda	\right) +\\
&\left( \int_{T-\bar a_1}^T\int_0^1 \left(\int_0^{T-t} \frac{1}{k_1}y^2(s+t,s,x)ds\right)dxdt+ \int_0^T \int_0^A\int_ \omega \frac{z^2}{k_1} dx dadt\right).
\end{aligned}
\]
It remains to estimate $ \ds\int_{\bar Q_1}\frac{y^2}{k_2}dxdadt$ and $\ds  \int_{T-\bar a_1}^T\int_0^1 \left(\int_0^{T-t} \frac{1}{k_1}y^2(s+t,s,x)ds\right)dxdt$.
First of all, we prove
\begin{equation}\label{stima3}
\int_{\bar Q_1}\frac{y^2}{k_2}dxdadt\le  C\left( \int_{T- \bar a_1}^{T} \int_0^\delta\int_0^1 \frac{1}{k_2} y^2(t,a,x) dxdadt + \int_0^{\bar a_2} \int_0^1\frac{y_T^2(a,x) }{k_2}dxda + \int_0^T \int_0^A\!\!\int_ \omega \frac{1}{k_2} y^2 dx dadt\right).
\end{equation}
 Again we have
\[
\int_{\bar Q_1}\frac{y^2}{k_2}dxdadt =  \int_{T- \frac{3}{4}\bar a_1}^{T-\frac{\bar a_1}{8}}\left(\int_0^\delta + \int_\delta^A \right)\int_0^1\frac{y^2}{k_2}dxdadt \le \int_{T-\bar a_1}^{T}\int_0^\delta \int_0^1\frac{y^2}{k_2}dxdadt  +  \int_{T- \frac{3}{4}\bar a_1}^{T-\frac{\bar a_1}{8}}\int_\delta^A\int_0^1\frac{y^2}{k_2}dxdadt.
\]
By \eqref{terminenuovo11} and proceeding as before,
\[
\begin{aligned}
 \int_{T-\frac{3}{4} \bar a_1}^{T-\frac{\bar a_1}{8}} \int_\delta^A\int_0^1 \frac{1}{k_2} y^2(t,a,x) dxdadt &\le C \int_{T- \frac{3}{4}\bar a_1}^{T-\frac{\bar a_1}{8}} \int_\delta^A\int_0^1\bar\Theta_2 y_x^2e^{2s\bar\varphi_2} dxdadt \\
& +C \int_{T- \frac{3}{4}\bar a_1}^{T-\frac{\bar a_1}{8}} \int_\delta^A\int_0^1 \bar \Theta_2^3\left( \frac{x}{k_2}\right)^2y^2e^{2s\bar\varphi_2} dxdadt,
\end{aligned}
\]
where $\bar \Theta_2$ is defined in \eqref{thetatilde} with $T_\gamma:= T-\bar a_1$, $T_\beta:= T$, $\gamma =0$ and $\bar \varphi_2$ is the function  $\varphi_2$ associated to $\bar \Theta_2$  according to \eqref{13}.
Thus, by Theorem \ref{Cor2} (or Theorem \ref{Carleman}) applied to $ Q_2:=\left(T-\bar a_1, T\right) \times (0, A)\times (0,1)$ ,
\[
\int_{T- \frac{3}{4}\bar a_1}^{T- \frac{\bar a_1}{8}} \int_\delta^A\int_0^1 \frac{1}{k_2} y^2(t,a,x) dxdadt \le
C\left(\int_{ Q_2}\frac{g^{2}}{k_2}dxdadt+ \int_0^T \int_0^A\int_ \omega \frac{y^2}{k_2} dx dadt\right),
\]
where, in this case, $g(t,a,x):=-\beta_2(a,x)y(t,0,x)$.
Hence, by \eqref{y(0)},
\begin{equation}\label{t=01'}
\begin{aligned}
\int_{T- \frac{3}{4}\bar a_1}^{T-\frac{\bar a_1}{8}} \!\! \int_\delta^A\!\!\int_0^1 \frac{1}{k_2} y^2dxdadt& \le C\|\beta_2\|^2_{L^\infty(Q)}\!\left(\int_{ Q_2}\frac{1}{k_2} y^2(t,0,x)dxdadt+ \int_0^T \int_0^A\!\!\int_ \omega \frac{1}{k_2} y^2 dx dadt\right),
\\
&\le C \left(\int_0^{\bar a_1} \int_0^1\frac{y_T^2(a,x) }{k_2}dxda + \int_0^T \int_0^A\!\!\int_ \omega \frac{1}{k_2} y^2 dx dadt\right)\\
&\le C \left(\int_0^{\bar a_2} \int_0^1\frac{y_T^2(a,x) }{k_2}dxda + \int_0^T \int_0^A\!\!\int_ \omega \frac{1}{k_2} y^2 dx dadt\right),
\end{aligned}
\end{equation}
for a  strictly positive constant $C$. Thus \eqref{stima3} follows. Finally, we prove that there exists a positive constant $C$, such that
\[
  \int_{T-\bar a_1}^T\int_0^1 \left(\int_0^{T-t} \frac{1}{k_1}y^2(s+t,s,x)ds\right)dxdt \le C\int_0^{\bar a_2} \int_0^1 \frac{1}{k_2}y_T^2(s,x)dsdx.
\]
Indeed, since $t \in (T-\bar a_1,T)$ and $\bar a_1 \le \bar a_2$, it follows  $t \ge T-\bar a_1 \ge T-\bar a_2$. Thus, by the implicit formula of $y$, it follows
that
\[
y(s+t,s,x)= S_2(T-s-t) y_T(T-t,x).
\]
Hence
\[
\begin{aligned}
 & \int_{T-\bar a_1}^T\int_0^1\int_0^{T-t} \frac{1}{k_1}y^2(s+t,s,x)dsdxdt \le C   \int_{T-\bar a_1}^T\int_0^1\int_0^{T-t} \frac{1}{k_1}y_T^2(T-t,x)dsdxdt\\
&\le C\int_0^{\bar a_1} \int_0^1 \frac{1}{k_2}y_T^2(s,x)dsdx \le  C\int_0^{\bar a_2} \int_0^1 \frac{1}{k_2}y_T^2(s,x)dsdx.
\end{aligned}
\]

{\underline{ $\bar a_1 > \bar a_2$:}} As before, one can prove \eqref{Eq1}, \eqref{Eq2}, \eqref{t=01} and hence \eqref{stima1}.  Now, we prove \eqref{stima2}. As for the case $\bar a_1 \le \bar a_2$,  we multiply
 the equation of  \eqref{h=0'new} by  $\ds-\frac{W}{k_1}$ and integrate by parts on $\bar Q_{2,t}:= (T_1,t) \times(0,A) \times(0,1)$. Hence, one can obtain

\begin{equation}\label{Eq2n}
\begin{aligned}
 \int_{Q_{A,1}}  \frac{1}{k_1} z^2(T_1,a,x) dxda  \le C \left(\int_{Q_{A,1}}\frac{1}{k_1}  z^2(t,a,x) dxda+ \int_{\bar Q_{2,t}}\frac{1}{k_1} y^2dxdads\right).
\end{aligned}
\end{equation}
Now, take $\delta \in (0, A)$.  Then, integrating \eqref{Eq2n} over $\ds \left[T-\frac{\bar a_2}{4}, T-\frac{\bar a_2}{8} \right]$,
 we have
 \begin{equation}\label{t=011n}
\begin{aligned}
 \int_{Q_{A,1}} \frac{z^2}{k_1} (T_1,a,x) dxda  &\le C\ds  \int_{T-\frac{\bar a_2}{4}}^{T-\frac{\bar a_2}{8}} \int_{Q_{A,1}}\left(\frac{z^2}{k_1}+ \frac{y^2}{k_1}\right) dxdadt
\\
&\le C \int_{T-\frac{\bar a_2}{4}}^{T-\frac{\bar a_2}{8}}\left(\int_0^\delta + \int_\delta^A \right)\int_0^1\left(\frac{z^2}{k_1}+\frac{y^2}{k_2}\right) dxdadt.
\end{aligned}
\end{equation}
Proceeding as in \eqref{stimabo}, we have
\[
\begin{aligned}
\int_{T-\frac{\bar a_2}{4}}^{T-\frac{\bar a_2}{8}}\!\! \int_\delta^A\!\!\int_0^1 \frac{1}{k_2} y^2dxdadt& 
&\le C \left(\int_0^{\bar a_2} \int_0^1\frac{y_T^2(a,x) }{k_2}dxda + \int_0^T \int_0^A\!\!\int_ \omega \frac{1}{k_2} y^2 dx dadt\right),
\end{aligned}
\]
where
$\tilde \Theta_2$ and $\tilde \varphi_2$ are as in \eqref{stimabo}, hence $T_\gamma:= T-\bar a_2$, $T_\beta:= T$ and $\gamma =0$. Observe that we have $\ds t \ge T-\frac{\bar a_2}{4} \ge T-\bar a_2$, hence we can apply the implicit formula \eqref{y(0)}.

Now, consider  $\ds \int_{T-\frac{\bar a_2}{4}}^{T-\frac{\bar a_2}{8}} \int_\delta^A\int_0^1\frac{1}{k_1} z^2dxdadt$. As before,  by Theorem \ref{Carleman} applied to $\bar Q_1:=\left(T-\ds \frac{\bar a_2}{2}, T- \ds\frac{\bar a_2}{10}\right) \times (0, A)\times (0,1)$, replacing  $\tilde \varphi_2$,  $\tilde \Theta_2$ by $\tilde \varphi_1$, $\tilde \Theta_1$, respectively, where  $\tilde \varphi_1$  is the function  $\varphi_1$ associated to $\tilde \Theta_1$  according to \eqref{13}, and $\tilde \Theta_1$  is defined in \eqref{thetatilde} with $T_\gamma:= T-\ds \frac{ \bar a_2}{2}$, $T_\beta:=T- \ds\frac{\bar a_2}{10}$, $\gamma =0$, one has
\[
\int_{T-\frac{\bar a_2}{4}}^{T-\frac{\bar a_2}{8}}  \int_\delta^A\int_0^1 \frac{1}{k_1} z^2(t,a,x) dxdadt \le
C\left(\int_{\bar Q_1}\frac{f^{2}}{k_1}dxdadt+ \int_{\bar Q_1}\frac{y^2}{k_1}dxdadt + \int_0^T \int_0^A\int_ \omega \frac{z^2}{k_1} dx dadt\right),
\]
where again $f(t,a,x):= - \beta_1(a,x)z(t,0,x)$. Hence
\begin{equation}\label{tondino}
\begin{aligned}
&\int_{T-\frac{\bar a_2}{4}}^{T-\frac{\bar a_2}{8}} \int_\delta^A\int_0^1 \frac{1}{k_1} z^2(t,a,x) dxdadt \le C\left(\int_{\bar Q_1}\frac{y^2}{k_1}dxdadt + \int_{\bar Q_1}\frac{z^2(t,0,x)}{k_1}dxdadt  + \int_0^T \int_0^A\int_ \omega \frac{z^2}{k_1} dx dadt\right)\\
&\le  C\left(\int_{\bar Q_1}\frac{y^2}{k_2}dxdadt + \int_0^{\bar a_1} \int_0^1\frac{z_T^2(a,x) }{k_1}dxda + \int_{T-\frac{\bar a_2}{2}}^{T-\frac{\bar a_2}{10}}\int_0^1 \left(\int_0^{T-t} \frac{1}{k_1}y^2(s+t,s,x)ds\right)dxdt\right)
\\
&+ C\int_0^T \int_0^A\int_ \omega \frac{z^2}{k_1} dx dadt.
\end{aligned}
\end{equation}
 Again we have
\begin{equation}\label{star}
\begin{aligned}
\int_{\bar Q_1}\frac{y^2}{k_2}dxdadt &= \int_{T-\frac{\bar a_2}{2}}^{T-\frac{\bar a_2}{10}}\left(\int_0^\delta + \int_\delta^A \right)\int_0^1\frac{y^2}{k_2}dxdadt \\
&\le C \left(\int_0^T\int_0^\delta \int_0^1\frac{y^2}{k_2}dxdadt
+\int_0^{\bar a_2} \int_0^1\frac{y_T^2(a,x) }{k_2}dxda + \int_0^T \int_0^A\!\!\int_ \omega \frac{1}{k_2} y^2 dx dadt\right).\end{aligned}
\end{equation}
Now, it remains to estimate
\[
\begin{aligned}
 \int_{T-\frac{\bar a_2}{2}}^{T-\frac{\bar a_2}{10}}\int_0^1 \left(\int_0^{T-t} \frac{1}{k_1}y^2(s+t,s,x)ds\right)dxdt.
\end{aligned}
\]
Since $t \in \left[ T-\frac{\bar a_2}{2}, T-\frac{\bar a_2}{10}\right]$, it holds $\frac{\bar a_2}{10} \le T-t$, thus one can divide the integral
in the following way
\[
\begin{aligned}
 \int_{T-\frac{\bar a_2}{2}}^{T-\frac{\bar a_2}{10}}\int_0^1 \left(\int_0^{T-t} \frac{1}{k_1}y^2(s+t,s,x)ds\right)dxdt &\le  \int_{T-\frac{\bar a_2}{2}}^{T-\frac{\bar a_2}{10}}\int_0^{\frac{\bar a_2}{10} }\int_0^1\frac{1}{k_2}y^2(s+t,s,x)dxdsdt \\
& + \int_{T-\frac{\bar a_2}{2}}^{T-\frac{\bar a_2}{10}} \int_{\frac{\bar a_2}{10} }^{T-t}\int_0^1  \frac{1}{k_2}y^2(s+t,s,x)dxdsdt.
\end{aligned}
\]
Now, by \eqref{terminenuovo11} and Theorem \ref{Cor2} (or Theorem \ref{Carleman}) applied to $y$ in $\bar Q_3:=\left(T-\bar a_2, T - \frac{\bar a_2}{12}\right) \times (0, T-t)\times (0,1)$, replacing  $\varphi_2$,  $ \Theta_2$ by $\tilde \varphi_3$, $\tilde \Theta_3$, respectively, where  $\tilde \varphi_3$  is the function  $\varphi_2$ associated to $\tilde \Theta_3$  according to \eqref{13}, and $\tilde \Theta_3$  is defined in \eqref{thetatilde} with $T_\gamma:= T-\bar a_2$, $T_\beta:=T-\frac{\bar a_2}{12}$, $\gamma =0$, one has
\[
\begin{aligned}
 & \int_{T-\frac{\bar a_2}{2}}^{T-\frac{\bar a_2}{10}} \int_{\frac{\bar a_2}{10} }^{T-t}\int_0^1  \frac{1}{k_2}y^2(s+t,s,x)dxdsdt \le 
 \int_{T-\frac{\bar a_2}{2}}^{T-\frac{\bar a_2}{10}} \int_{\frac{\bar a_2}{10} }^{T-t}\int_0^1  y_x^2(s+t,s,x)dxdsdt\\
& \le \int_{\bar Q_3}\frac{h^2(s+t,s,x)}{k_2} dxds dt+ \int_{T-\bar a_2}^T\int_0^{T-t} \int_\omega\frac{y^2(s+t,s,x)}{k_2} dxdsdt
\end{aligned}
\]
where, in this case, $h(s+t,s,x):=-\beta_2(s,x)y(s+t,0,x)$. Moreover, $s+t \ge t \ge T-\bar a_2$; thus, by \eqref{y(0)}, one has
\[
\begin{aligned}
\int_{\bar Q_3}\frac{h^2(s+t,s,x)}{k_2} dxds dt&\le C \int_{\bar Q_3}\frac{y^2(s+t,0,x)}{k_2} dxds dt \le \int_{\bar Q_3}\frac{y_T^2(T-s-t,x)}{k_2} dxds dt \\
&\le C\int_{T-\bar a_2}^T\int_0^{T-t}\int_0^1\frac{y_T^2(\tau,x)}{k_2} dxd\tau dt \le C\int_0^{\bar a_2}\int_0^1\frac{y_T^2(\tau,x)}{k_2} dxd\tau.
\end{aligned}\]
Analogously, 
\[
 \int_{T-\frac{\bar a_2}{2}}^T\int_0^{T-t} \int_\omega\frac{y^2(s+t,s,x)}{k_2} dxdsdt\le C  \int_{T-\bar a_2}^T\int_0^{T-t} \int_\omega\frac{y_T^2(T-t,x)}{k_2} dxdsdt \le  C\int_0^{\bar a_2}\int_0^1\frac{y_T^2(\tau,x)}{k_2} dxd\tau.
\]
Finally, since $t \ge T-\frac{\bar a_2}{2}\ge T-\bar a_2$, one has $s+t \ge T-\bar a_2 +s$; thus, by \eqref{implicitformula} obtaining
\begin{equation}\label{startondino}
\begin{aligned}
 \int_{T-\frac{\bar a_2}{2}}^{T-\frac{\bar a_2}{10}}\int_0^{\frac{\bar a_2}{10} }\int_0^1\frac{1}{k_2}y^2(s+t,s,x)dxdsdt&\le C   \int_{T-\frac{\bar a_2}{2}}^{T-\frac{\bar a_2}{10}}\int_0^{\frac{\bar a_2}{10} }\int_0^1\frac{1}{k_2}y_T^2(T-t,x)dxdsdt
\\
&\le C \int_0^{\bar a_2}\int_0^1\frac{y_T^2(\tau,x)}{k_2} dxd\tau.
\end{aligned}
\end{equation}
Hence, \eqref{stima2} holds also in the case $\bar a_1 >\bar a_2$.
\end{proof}

We underline that in this paper we improve \eqref{stima1} given in \cite{f_JMPA} for a single equation. Indeed here we do not require that $T<A$ and the proof is simpler. The same improvement holds also for the next result which generalizes the previous theorem.
\begin{Theorem}\label{CorOb1'}Assume Hypotheses $\ref{ratesAss}$, $\ref{Assw}$, $\ref{AssP}$ and  \ref{conditionbeta}. Then, for every $\delta \in (\gamma,A)$, where $\gamma = \max \{\bar a_1, \bar a_2\}$,
there exists a strictly positive constant $C=C(\delta)$ such that every
solution $(z,y)\in L^2\big(Q_{T,A}; \mathbb{K}\big) \cap H^1\big(0, T; H^1(0,A;\mathbb{H})\big)$ of \eqref{adjoint}
satisfies
\begin{equation}\label{ribo}
\int_{Q_{A,1}} \frac{1}{k_2} y^2(T-\bar a_2, a,x) dxda  \le 
 C\left( \int_0^\delta \int_0^1\frac{1}{k_2} y_T^2(a,x)dxda+ \int_{Q_{T,A}}\int_ \omega\frac{1}{k_2} y^2 dx dadt\right)
\end{equation}
and
\begin{equation}\label{ribo1}
\begin{aligned}
\int_{Q_{A,1}} \frac{1}{k_1} z^2(T-\bar a_1, a,x) dxda & \le 
 C\left( \int_0^\delta \int_0^1\left(\frac{z_T^2(a,x)}{k_1} +\frac{y_T^2(a,x) }{k_2} \right)dxda+ \int_{Q_{T,A}}\int_ \omega\left(\frac{z^2 }{k_1} + \frac{y^2}{k_2}  \right) dx dadt\right).
\end{aligned}
\end{equation}
\end{Theorem}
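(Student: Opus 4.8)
The plan is to deduce \eqref{ribo} and \eqref{ribo1} directly from the already–established estimates \eqref{stima1} and \eqref{stima2} of Theorem \ref{Theorem4.4}. Comparing the two pairs of inequalities, and using that $\bar a_i<\gamma<\delta$ so that $\int_0^{\bar a_i}\le\int_0^\delta$, the only thing left to prove is that the interior low–age terms are themselves dominated by the final data and the $\omega$–observation, namely
\[
\int_0^T\!\!\int_0^\delta\!\!\int_0^1 \frac{y^2}{k_2}\,dx\,da\,dt \le C\left(\int_0^\delta\!\!\int_0^1 \frac{y_T^2}{k_2}\,dx\,da + \int_{Q_{T,A}}\!\!\int_\omega \frac{y^2}{k_2}\,dx\,da\,dt\right),
\]
together with the analogous bound for $\int_0^T\int_0^\delta\int_0^1 k_1^{-1}z^2$. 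These low ages are precisely the range that the Carleman argument of Theorem \ref{Theorem4.4} cannot reach, because the age weight $\Theta\sim a^{-4}$ in \eqref{571} degenerates at $a=0$; hence here one must argue along characteristics rather than by Carleman.

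First I would treat $y$. Fixing $(t,a)$ with $a\in(0,\delta)$ and following the characteristic $\tau\mapsto(t+\tau,a+\tau)$ forward, the representation formulas \eqref{implicitformula}--\eqref{implicitformula1}, obtained by the method of characteristics together with $y(t,A,\cdot)=0$, express $y(t,a,\cdot)$ through $y_T$ (when the characteristic exits at $t=T$) and through the age–zero trace $y(\cdot,0,\cdot)$ weighted by $\beta_2$ (when a renewal contribution appears). Since $\beta_2\equiv0$ on $[0,\bar a_2]$ by Hypothesis \ref{conditionbeta}, every renewal contribution only involves ages $s>\bar a_2$, and the fact that the $S_i$ are contractions on $L^2_{\frac{1}{k_2}}(0,1)$ lets me estimate $\|y(t,a,\cdot)\|^2_{L^2_{\frac{1}{k_2}}(0,1)}$ by the corresponding norms of $y_T$ and of the traces. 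Where $t\ge T_2+a$ (with $T_2=T-\bar a_2$) the formula reduces to $y(t,a,\cdot)=S_2(T-t)y_T(T+a-t,\cdot)$ with terminal age $T+a-t\le\bar a_2<\delta$, so after the change of variable $\sigma=T+a-t$ this part is bounded by $C\int_0^\delta\int_0^1 k_2^{-1}y_T^2$. The renewal traces at times $t'\ge T_2$ are in turn rewritten by \eqref{y(0)} as $S_2(T-t')y_T(T-t',\cdot)$, again producing final–data terms at ages $\le\bar a_2<\delta$; integrating over the strip $(0,T)\times(0,\delta)$ and changing variables then yields the claimed bound.

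The main obstacle is exactly the age–zero trace $y(\cdot,0,\cdot)$ evaluated at times below $T_2$, for which the explicit identity \eqref{y(0)} is not available: these arise through the branch of \eqref{implicitformula1} whose characteristics hit $a=A$ (the case $\Gamma+a\ge t$, with $\Gamma=T-A$), which is the delicate regime that the single–equation argument of \cite{f_JMPA} avoided by imposing $T<A$. I would close this by a Volterra/Gronwall argument for the trace energy $t\mapsto\int_0^1 k_2^{-1}y^2(t,0,x)\,dx$: the renewal relation together with $\beta_2\equiv0$ on $[0,\bar a_2]$ turns it into a delayed integral inequality, whose iteration bounds the low–time traces by the high–time ones and hence, through \eqref{y(0)}, by $y_T$. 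The hypothesis $\delta>\gamma$ is used precisely to guarantee that all terminal and trace ages produced in this process remain within $[0,\delta]$.

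Finally, the estimate for $z$ follows the same scheme applied to the representation \eqref{implicitformula2}--\eqref{implicitformula3} and the trace formula \eqref{z(0)}. The only new feature is the coupling source $G=-\mu_{21}y$: its contribution is absorbed using the $y$–bound just obtained, while the comparison $k_1\ge k_2$ from Hypothesis \ref{AssP} converts the weight $k_1^{-1}y^2$ into $k_2^{-1}y^2$, so that the $y$–terms enter \eqref{ribo1} with the correct weight $1/k_2$. Combined with the $z_T$–terms and with the $z$–trace handled by the same Gronwall argument via \eqref{z(0)}, this gives the low–age bound for $z$, and hence \eqref{ribo1} via \eqref{stima2}.
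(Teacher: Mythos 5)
There is a genuine gap. Your reduction of \eqref{ribo} to \eqref{stima1} hinges on the intermediate claim
\[
\int_0^T\!\!\int_0^\delta\!\!\int_0^1 \frac{y^2}{k_2}\,dx\,da\,dt \le C\left(\int_0^\delta\!\!\int_0^1 \frac{y_T^2}{k_2}\,dx\,da + \int_{Q_{T,A}}\!\!\int_\omega \frac{y^2}{k_2}\,dx\,da\,dt\right),
\]
i.e.\ a bound on the low--age integral over the \emph{whole} time interval $(0,T)$, and your characteristics argument cannot produce the restriction of the terminal--data term to ages in $(0,\delta)$. Along the backward characteristic from a point $(t,a)$ with $a<\delta$ but $t$ far from $T$, the branch $\Gamma+a\le t$ of \eqref{implicitformula1} expresses $y(t,a,\cdot)$ through $S_2(T-t)y_T(T+a-t,\cdot)$ with terminal age $T+a-t$ ranging over essentially all of $(0,A)$, not $(0,\delta)$; this direct term is untouched by your Volterra/Gronwall iteration on the trace, which only addresses the renewal contributions. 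The iteration itself has the same defect: already at the first backward step ($t\in(T_2-\bar a_2,T_2)$) the representation produces $y_T(T-t,\cdot)$ with $T-t\in(\bar a_2,2\bar a_2)$, possibly beyond $\delta$, and each further step shifts the accessed terminal age by at least $\bar a_2$. At best you would obtain a bound involving $\int_0^A\int_0^1 y_T^2/k_2$, which is both weaker than \eqref{ribo} and useless for the intended application in Theorem \ref{ultimo}, where $y_T=g_2\chi_{(\delta,A)}$ vanishes precisely on $(0,\delta)$. (The claim might conceivably be rescued by a full observability argument for small times, but that is a substantially harder statement than \eqref{ribo} itself and nothing in your sketch supplies it.)

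The paper's proof never estimates that full--time integral. It returns to the backward energy inequalities \eqref{Eq1}--\eqref{Eq2}, integrates them only over a window such as $\left[T-\tfrac{\bar a_2}{2},\,T-\tfrac{\bar a_2}{4}\right]$, and splits the age integral at $\delta-\bar a_2$ rather than at $\delta$. On the low--age piece $a\in(0,\delta-\bar a_2)$ one then has $T-t\le\bar a_2\le\delta-a\le A-a$, which simultaneously guarantees: (i) the characteristic exits at $t=T$ (first branch of \eqref{implicitformula1}, no exit through $a=A$); (ii) the terminal age satisfies $T+a-t\le\delta$, so only $y_T$ on $(0,\delta)$ appears; and (iii) every renewal trace occurs at a time $\ge T-\bar a_2$, where the explicit formulas \eqref{y(0)} and \eqref{z(0)} apply directly --- so no iteration on the trace is needed at all. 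The high--age piece $(\delta-\bar a_2,A)$ is handled by Carleman plus Hardy--Poincar\'e exactly as in Theorem \ref{Theorem4.4}. You should restructure your argument around this time--window/age--shift device rather than attempting the global low--age bound.
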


\begin{proof} 
As in the previous theorem, we distinguish between the case $\bar a_1 \le \bar a_2$ and $\bar a_1 > \bar a_2$.

First of all, assume $\bar a_1 \le \bar a_2$. Again \eqref{Eq1} and \eqref{Eq2} hold.
Then, integrating  \eqref{Eq1} over  $\left[T-\ds\frac{\bar a_2}{2}, T-\ds\frac{\bar a_2}{4}\right]$   and   \eqref{Eq2} over $\left[T-\ds\frac{\bar a_1}{2}, T-\ds\frac{\bar a_1}{4}\right]$, we have for all $\delta >\gamma$:
\begin{equation}\label{t=041new}
 \int_{Q_{A,1}} \frac{1}{k_2} y^2(T-\bar a_2,a,x) dxda  \le C\int_{T-\frac{\bar a_2}{2}}^{T-\frac{\bar a_2}{4}} \left(\int_0^{\delta - \bar a_2} + \int_{\delta - \bar a_2}^A \right)\int_0^1 \frac{1}{k_2} y^2(t,a,x) dxdadt
\end{equation}
and
\begin{equation}\label{t=041new'}
 \int_{Q_{A,1}} \frac{1}{k_1}z^2(T-\bar a_1a,x) dxda  \le C\int_{T-\frac{\bar a_1}{2}}^{T-\frac{\bar a_1}{4}} \left(\int_0^{\delta - \bar a_1} + \int_{\delta - \bar a_1}^A \right)\int_0^1\left( \frac{z^2}{k_1} + \frac{y^2}{k_2}\right)dxdadt.
\end{equation}
Proceeding as before, one can prove \eqref{t=01}. Thus, using Theorem \ref{Cor2}, we can prove
\begin{equation}\label{bo2}
\begin{aligned}
\int_{T-\frac{\bar a_2}{2}}^{T-\frac{\bar a_2}{4}} \int_{\delta - \bar a_2}^A \!\!\int_0^1\frac{1}{k_2} y^2(t,a,x) dxdadt &\le C\int_0^{\bar a_2}\int_0^1\!\! \frac{1}{k_2} y^2_T(a,x)dxda+C \int_0^T \!\!\int_0^A\!\!\int_ \omega \frac{1}{k_2} y^2 dx dadt.
\end{aligned}
\end{equation}
 It remains to estimate
\[
\int_{T-\frac{\bar a_2}{2}}^{T-\frac{\bar a_2}{4}}\int_0^{\delta - \bar a_2}\int_0^1 \frac{1}{k_2} y^2(t,a,x) dxdadt.
\]
Observe that $t \ge T-\ds \frac{\bar a_2}{2} \ge T-\bar a_2$ and $a \in (0, \delta -\bar a_2)$. Thus $T-t \le \bar a_2 \le \delta-a \le A-a$ and, by the first formula in \eqref{implicitformula1}, we have 
\[
y(t,a, \cdot)=
S_2(T-t) y_T(T+a-t, \cdot)\!+\int_a^{T+a-t}S_2(s-a)\beta_2(s, \cdot)y(s+t-a, 0, \cdot) ds.
\]
It follows:
\begin{equation}\label{secondanew}
\begin{aligned}
&
\int_{T-\frac{\bar a_2}{2}}^{T-\frac{\bar a_2}{4}}\int_0^{\delta - \bar a_2} \int_0^1  \frac{1}{k_2}y^2(t,a,x) dxdadt\\
&\le C\int_{T-\frac{\bar a_2}{2}}^{T-\frac{\bar a_2}{4}}\int_0^{\delta - \bar a_2}\int_0^1 \frac{1}{k_2}y^2_T(T+a-t,x)dxdadt \\
&+C\int_{T-\frac{\bar a_2}{2}}^{T-\frac{\bar a_2}{4}}\int_0^{\delta - \bar a_2}\int_0^1  \left(\int_a^{T+a-t}\frac{1}{k_2}y^2(s+t-a,0,x)ds \right)dxdadt
\end{aligned}
\end{equation}
\[
\begin{aligned}
&\qquad =C\int_{\frac{\bar a_2}{4}}^{\frac{\bar a_2}{2}}\int_{0}^{\delta -\bar a_2} \int_0^1 \frac{1}{k_2}y^2_T(a+z,x)dxda dz \\
&\qquad +C\int_{T-\frac{\bar a_2}{2}}^{T-\frac{\bar a_2}{4}}\int_0^{\delta - \bar a_2} \int_0^1  \left(\int_{-a}^{T-a-t}\frac{1}{k_2}y_T^2(a+z,x)dz \right)dxdadt
\\
&\qquad  \le C\int_{\frac{\bar a_2}{4}}^{\delta-\frac{\bar a_2}{2}} \int_0^1 \frac{1}{k_2}y^2_T(\sigma,x)dxd\sigma dz \\
&\qquad +C\int_{T-\frac{\bar a_2}{2}}^{T-\frac{\bar a_2}{4}}\int_0^{\delta - \bar a_2}\int_0^1  \left(\int_0^{T-t}\frac{1}{k_2}y_T^2(\sigma,x)d\sigma \right)dxdadt\\
&\qquad  \le C\int_{0}^{\delta} \int_0^1 \frac{1}{k_2}y^2_T(\sigma,x)dxd\sigma +C \int_{T-\bar a_2}^{T-\frac{\bar a_2}{4}}\int_0^{\delta -\bar a_2} \int_0^1  \left(\int_0^{\bar a_2}\frac{1}{k_2}y_T^2(\sigma,x)d\sigma \right)dxdadt\\
&\qquad  \le C\int_0^\delta\int_0^1\frac{1}{k_2}y_T^2(\sigma,x)d\sigma dx.
\end{aligned}
\]
By \eqref{t=041new}-\eqref{secondanew}, \eqref{ribo} follows. Now, we estimate \eqref{t=041new'}.
Proceeding as in the previous theorem and in \eqref{secondanew}, one has
\[
\begin{aligned}
\int_{T-\frac{\bar a_1}{2}}^{T-\frac{\bar a_1}{4}}\!\! \int_{\delta- \bar a_1}^A\!\!\int_0^1 \frac{1}{k_2} y^2dxdadt \le C \left(\int_0^{\bar a_2} \int_0^1\frac{y_T^2(a,x) }{k_2}dxda + \int_0^T \int_0^A\!\!\int_ \omega \frac{1}{k_2} y^2 dx dadt\right),
\end{aligned}
\]
and
\[\begin{aligned}
\int_{T-\frac{\bar a_1}{2}}^{T-\frac{\bar a_1}{4}}\int_0^{\delta - \bar a_1} \int_0^1  \frac{1}{k_2}y^2(t,a,x) dxdadt \le C\int_0^\delta\int_0^1\frac{1}{k_2}y_T^2(\sigma,x)d\sigma dx.
\end{aligned}
\]
Moreover, by \eqref{star},
\[
\begin{aligned}
&\int_{T-\frac{\bar a_1}{2}}^{T-\frac{\bar a_1}{4}}   \int_{\delta-\bar a_1}^A\int_0^1 \frac{1}{k_1} z^2(t,a,x) dxdadt \le  C \int_{T- \bar a_1}^{T} \int_0^{\delta- \bar a_1}\int_0^1 \frac{1}{k_2} y^2(t,a,x) dxdadt +C\int_0^{\bar a_2} \int_0^1\frac{y_T^2(a,x) }{k_2}dxda \\&+C \int_{Q_{T,A}}\int_ \omega \frac{1}{k_2} y^2 dx dadt+
\int_0^{\bar a_1} \int_0^1\frac{z_T^2(a,x) }{k_1}dxda + \int_{Q_{T,A}}\int_ \omega \frac{z^2}{k_1} dx dadt
\\&
+ \int_{T-\bar a_1}^T\int_0^1 \left(\int_0^{T-t} \frac{1}{k_1}y^2(s+t,s,x)ds\right)dxdt\\
&\le C\int_0^{\delta} \int_0^1\frac{y_T^2(a,x) }{k_2}dxda +C \int_0^T \int_0^A\!\!\int_ \omega \frac{1}{k_2} y^2 dx dadt+
\int_0^{\delta} \int_0^1\frac{z_T^2(a,x) }{k_1}dxda + \int_{Q_{T,A}}\int_ \omega \frac{z^2}{k_1} dx dadt.
\end{aligned}
\]
Now, we consider the integral
\begin{equation}
 \int_{T-\frac{\bar a_1}{2}}^{T-\frac{\bar a_1}{4}} \int_0^{\delta - \bar a_1}\int_0^1 \frac{1}{k_1} z^2dxdadt.
\end{equation}
Observe that $t \ge T-\ds \frac{\bar a_1}{2} \ge T-\bar a_1$ and $a \in (0, \delta -\bar a_1)$. Thus $T-t \le \bar a_1 \le \delta-a \le A-a$ and, by the first formula in \eqref{implicitformula3}, the first formula in \eqref{implicitformula1} and by \eqref{z(0)}, it follows
\begin{equation}\label{secondanew1}
\begin{aligned}
&
\int_{T-\frac{\bar a_1}{2}}^{T-\frac{\bar a_1}{4}}\int_0^{\delta - \bar a_1} \int_0^1  \frac{1}{k_1}z^2(t,a,x) dxdadt\\
&\le C\int_{T-\frac{\bar a_1}{2}}^{T-\frac{\bar a_1}{4}}\int_0^{\delta - \bar a_1}\int_0^1 \frac{1}{k_1}z^2_T(T+a-t,x)dxdadt \\
&+C\int_{T-\frac{\bar a_1}{2}}^{T-\frac{\bar a_1}{4}}\int_0^{\delta - \bar a_1}\int_0^1  \left(\int_a^{T+a-t}\frac{1}{k_1}z^2(s+t-a,0,x)ds \right)dxdadt\\
&+C\int_{T-\frac{\bar a_1}{2}}^{T-\frac{\bar a_1}{4}}\int_0^{\delta - \bar a_1}\int_0^1  \left(\int_a^{T+a-t}\frac{1}{k_1}y^2(s+t-a,s,x)ds \right)dxdadt\\
\end{aligned}
\end{equation}
\[
\begin{aligned}
&\qquad \quad \quad =C\int_{\frac{\bar a_1}{4}}^{\frac{\bar a_1}{2}}\int_{0}^{\delta -\bar a_1} \int_0^1 \frac{1}{k_1}z^2_T(a+z,x)dxda dz \\
&\qquad \quad \quad+C\int_{T-\frac{\bar a_1}{2}}^{T-\frac{\bar a_1}{4}}\int_0^{\delta - \bar a_1} \int_0^1  \left(\int_{-a}^{T-a-t}\frac{1}{k_1}z_T^2(a+z,x)dz \right)dxdadt\\
& \qquad \quad \quad+C\int_{T-\frac{\bar a_1}{2}}^{T-\frac{\bar a_1}{4}}\int_0^{\delta - \bar a_1}\int_0^1  \left(\int_a^{T+a-t}\frac{1}{k_1}y_T^2(T-t+a,x)ds \right)dxdadt\\
&\qquad \quad \quad+C\int_{T-\frac{\bar a_1}{2}}^{T-\frac{\bar a_1}{4}}\int_0^{\delta - \bar a_1}\int_0^1  \left(\int_a^{T+a-t}\frac{1}{k_1}y^2(s+t-a,0,x)ds \right)dxdadt
\\
& \qquad \quad \quad\le C\int_{\frac{\bar a_1}{4}}^{\delta-\frac{\bar a_1}{2}} \int_0^1 \frac{1}{k_1}z^2_T(\sigma,x)dxd\sigma dz +C\int_{T-\frac{\bar a_1}{2}}^{T-\frac{\bar a_1}{4}}\int_0^{\delta - \bar a_1}\int_0^1  \left(\int_0^{T-t}\frac{1}{k_1}z_T^2(\sigma,x)d\sigma \right)dxdadt\\
&\qquad \quad \quad+C\int_{T-\frac{\bar a_1}{2}}^{T-\frac{\bar a_1}{4}}\int_0^{\delta - \bar a_1}\int_0^1 \frac{1}{k_1}y_T^2(T-t+a,x)dxdadt\\&
\qquad \quad \quad+C\int_{T-\frac{\bar a_1}{2}}^{T-\frac{\bar a_1}{4}}\int_0^{\delta - \bar a_1}\int_0^1  \left(\int_a^{T+a-t}\frac{1}{k_1}y_T^2(T-s-t+a,x)ds \right)dxdadt\\
&\qquad \quad \quad\le
C\int_0^{\delta} \int_0^1 \frac{1}{k_1}z^2_T(\sigma,x)dxd\sigma dz +C\int_{T-\frac{\bar a_1}{2}}^{T-\frac{\bar a_1}{4}}\int_0^{\delta - \bar a_1}\int_0^1  \left(\int_0^{\bar a_1}\frac{1}{k_1}z_T^2(\sigma,x)d\sigma \right)dxdadt\\
&\qquad \quad \quad+C\int_0^{\delta}\int_0^1 \frac{1}{k_1}y_T^2(\sigma,x)dxd\sigma+ C\int_{T-\frac{\bar a_1}{2}}^{T-\frac{\bar a_1}{4}}\int_0^{\delta - \bar a_1} \int_0^1  \left(\int_{-a}^{T-a-t}\frac{1}{k_1}y_T^2(a+z,x)dz \right)dxdadt\\
&\qquad \quad \quad \le C\int_0^{\delta} \int_0^1 \frac{1}{k_1}z^2_T(\sigma,x)dxd\sigma dz+C\int_0^{\delta}\int_0^1 \frac{1}{k_1}y_T^2(\sigma,x)dxd\sigma\\
&\qquad \quad \quad+ C\int_{T-\frac{\bar a_1}{2}}^{T-\frac{\bar a_1}{4}}\int_0^{\delta - \bar a_1} \int_0^1  \left(\int_0^{T-t}\frac{1}{k_1}y_T^2(\sigma,x)dz \right)dxdadt\\
&\qquad \quad \quad \le C\int_0^{\delta} \int_0^1 \frac{1}{k_1}z^2_T(\sigma,x)dxd\sigma dz+ C\int_0^\delta\int_0^1\frac{1}{k_2}y_T^2(\sigma,x)d\sigma dx.
\end{aligned}
\]
Now, we assume $\bar a_1 > \bar a_2$:

As before, one can prove \eqref{t=041new}, \eqref{bo2}, \eqref{secondanew} and hence \eqref{ribo}.  Now, we prove \eqref{ribo1}. Again \eqref{Eq2n} holds with $\bar Q_{2,t}:= \left(\ds T- \frac{\bar a_2}{4},t\right) \times(0,A) \times(0,1)$. 
Now, take $\delta \in (\gamma, A)$.  Then, integrating \eqref{Eq2n} over $\ds \left[T-\frac{\bar a_2}{4}, T-\frac{\bar a_2}{8} \right]$,
 we have
 \begin{equation}\label{t=011new}
\begin{aligned}
 \int_{Q_{A,1}} \frac{z^2}{k_1} (T_1,a,x) dxda \le C \int_{T-\frac{\bar a_2}{4}}^{T-\frac{\bar a_2}{8}}\left(\int_0^{\delta-\bar a_1} + \int_ {\delta-\bar a_1}^A \right)\int_0^1\left(\frac{z^2}{k_1}+\frac{y^2}{k_2}\right) dxdadt.
\end{aligned}
\end{equation}
Proceeding as in \eqref{t=01}, we have
\[
\begin{aligned}
\int_{T-\frac{\bar a_2}{4}}^{T-\frac{\bar a_2}{8}}\!\! \int_{\delta-\bar a_1}^A\!\!\int_0^1 \frac{1}{k_2} y^2dxdadt& 
&\le C \left(\int_0^{\bar a_2} \int_0^1\frac{y_T^2(a,x) }{k_2}dxda + \int_0^T \int_0^A\!\!\int_ \omega \frac{1}{k_2} y^2 dx dadt\right),
\end{aligned}
\]
(observe that \eqref{stimabo} holds with
$\tilde \Theta_2$ and $\tilde \varphi_2$  as in \eqref{stimabo}, and $T_\gamma:= T-\bar a_2$, $T_\beta:= T$ and $\gamma =0$). Moreover, we have $\ds t \ge T-\frac{\bar a_2}{4} \ge T-\bar a_2$, hence we can apply the implicit formula \eqref{y(0)}.
Now, consider the term
\[
\int_{T-\frac{\bar a_2}{4}}^{T-\frac{\bar a_2}{8}}\!\! \int_0^{\delta-\bar a_1}\!\!\int_0^1 \frac{1}{k_2} y^2dxdadt.
\]
Clearly, as in \eqref{secondanew},
\begin{equation}\label{secondanew'}
\begin{aligned}
&\int_{T-\frac{\bar a_2}{4}}^{T-\frac{\bar a_2}{8}}\!\! \int_0^{\delta-\bar a_1}\!\!\int_0^1 \frac{1}{k_2} y^2dxdadt \le
\int_{T-\frac{\bar a_2}{2}}^{T-\frac{\bar a_2}{8}}\int_0^{\delta - \bar a_2} \int_0^1  \frac{1}{k_2}y^2(t,a,x) dxdadt\\
&\le C\int_{T-\frac{\bar a_2}{2}}^{T-\frac{\bar a_2}{8}}\int_0^{\delta - \bar a_2}\int_0^1 \frac{1}{k_2}y^2_T(T+a-t,x)dxdadt \\
&+C\int_{T-\frac{\bar a_2}{2}}^{T-\frac{\bar a_2}{8}}\int_0^{\delta - \bar a_2}\int_0^1  \left(\int_a^{T+a-t}\frac{1}{k_2}y^2(s+t-a,0,x)ds \right)dxdadt\\
& \le C\int_{\frac{\bar a_2}{8}}^{\delta-\frac{\bar a_2}{2}} \int_0^1 \frac{1}{k_2}y^2_T(\sigma,x)dxd\sigma dz \\
&+C\int_{T-\frac{\bar a_2}{2}}^{T-\frac{\bar a_2}{8}}\int_0^{\delta - \bar a_2}\int_0^1  \left(\int_0^{T-t}\frac{1}{k_2}y_T^2(\sigma,x)d\sigma \right)dxdadt\\
& \le C\int_0^\delta\int_0^1\frac{1}{k_2}y_T^2(\sigma,x)d\sigma dx.
\end{aligned}
\end{equation}
Now, consider  $\ds \int_{T-\frac{\bar a_2}{4}}^{T-\frac{\bar a_2}{8}} \int_{\delta- \bar a_1}^A\int_0^1\frac{1}{k_1} z^2dxdadt$. As before,  by Theorem \ref{Cor2} applied to $\bar Q_1:=\left(T-\ds \frac{\bar a_2}{2}, T- \ds\frac{\bar a_2}{10}\right) \times (0, A)\times (0,1)$, replacing  $\tilde \varphi_2$,  $\tilde \Theta_2$ by $\tilde \varphi_1$, $\tilde \Theta_1$, respectively, where  $\tilde \varphi_1$  is the function  $\varphi_1$ associated to $\tilde \Theta_1$  according to \eqref{13}, and $\tilde \Theta_1$  is defined in \eqref{thetatilde} with $T_\gamma:= T-\ds\frac{ \bar a_2}{2}$, $T_\beta:=T- \ds\frac{\bar a_2}{10}$, $\gamma =0$, one has
\[
\int_{T-\frac{\bar a_2}{4}}^{T-\frac{\bar a_2}{8}}  \int_{\delta-\bar a_1}^A\int_0^1 \frac{1}{k_1} z^2(t,a,x) dxdadt \le
C\left(\int_{\bar Q_1}\frac{f^{2}}{k_1}dxdadt+ \int_0^T \int_0^A\int_ \omega \frac{z^2}{k_1} dx dadt\right),
\]
where again $f(t,a,x):=\mu_{21}(t, a, x)y - \beta_1(a,x)z(t,0,x)$. Hence, proceeding as in the previous theorem and as in \eqref{secondanew'}, one has
\[
\begin{aligned}
&\int_{T-\frac{\bar a_2}{4}}^{T-\frac{\bar a_2}{8}} \int_{\delta-\bar a_1}^A\int_0^1 \frac{1}{k_1} z^2(t,a,x) dxdadt \\
&\le  C\left(\int_{\bar Q_1}\frac{y^2}{k_2}dxdadt + \int_0^{\bar a_1} \int_0^1\frac{z_T^2(a,x) }{k_1}dxda + \int_{T-\frac{\bar a_2}{2}}^{T-\frac{\bar a_2}{10}}\int_0^1 \left(\int_0^{T-t} \frac{1}{k_1}y^2(s+t,s,x)ds\right)dxdt
\right)\\
&+ C\int_0^T \int_0^A\int_ \omega \frac{z^2}{k_1} dx dadt\\
&\le C \int_{T-\frac{\bar a_2}{2}}^{T-\frac{\bar a_2}{10}}\int_0^\delta \int_0^1\frac{y^2}{k_2}dxdadt
+C\int_0^{\bar a_2} \int_0^1\frac{y_T^2(a,x) }{k_2}dxda + \int_0^T \int_0^A\!\!\int_ \omega \frac{y^2}{k_2}  dx dadt\\
&+ C\int_0^{\bar a_1} \int_0^1\frac{z_T^2(a,x) }{k_1}dxda + C\int_{T-\frac{\bar a_2}{2}}^{T-\frac{\bar a_2}{10}}\int_0^1 \left(\int_0^{T-t} \frac{1}{k_1}y^2(s+t,s,x)ds\right)dxdt+ C\int_0^T \int_0^A\int_ \omega \frac{z^2}{k_1} dx dadt\\
&\le  C\int_0^\delta \int_0^1\frac{y_T^2(a,x)}{k_2}dxda
+ \int_0^T \int_0^A\!\!\int_ \omega \frac{y^2}{k_2}  dx dadt+ C\int_0^\delta \int_0^1\frac{z_T^2(a,x) }{k_1}dxda \\&+ C\int_{T-\frac{\bar a_2}{2}}^{T-\frac{\bar a_2}{10}}\int_0^1 \left(\int_0^{T-t} \frac{1}{k_1}y^2(s+t,s,x)ds\right)dxdt+ C\int_0^T \int_0^A\int_ \omega \frac{z^2}{k_1} dx dadt.
\end{aligned}
\]
Now, it remains to estimate
\[
\begin{aligned}
 \int_{T-\frac{\bar a_2}{2}}^{T-\frac{\bar a_2}{10}}\int_0^1 \left(\int_0^{T-t} \frac{1}{k_1}y^2(s+t,s,x)ds\right)dxdt.
\end{aligned}
\]
As in \eqref{startondino}, one can prove
\[
\begin{aligned}
 &\int_{T-\bar a_2}^{T-\frac{\bar a_2}{10}}\int_0^1 \left(\int_0^{T-t} \frac{1}{k_1}y^2(s+t,s,x)ds\right)dxdt \le C \int_0^{\bar a_2}\int_0^1\frac{y_T^2(\tau,x)}{k_2} dxd\tau.
\end{aligned}
\]
Hence, \eqref{ribo1} holds.

\end{proof}
\begin{Remark}\label{regolarita}
Using a density argument one can prove \eqref{ribo} and \eqref{ribo1} for every
solution $(z,y)\in C([0,T]; L^2(0,A; 
\mathbb{L})) \cap L^2 (0,T; H^1(0,A;\mathbb{H}))$ of \eqref{adjoint}.
\end{Remark}

\subsection{Carleman inequalities and observability inequalities when the degeneracy is at $1$.}\label{sec-3-2}
In this case, in order to obtain $\omega-$local Carleman estimates and $\omega-$local observability inequality, in place of $\varphi_i$, we consider the weight functions
\begin{equation}\label{funzioninew}
\bar\varphi_i(t,a,x):=\Theta(t,a)(\bar p_i(x) - 2 \|\bar p_i\|_{L^\infty(0,1)}),
\end{equation}
where $\Theta$ is as in \eqref{571} and
$\displaystyle \bar p_i(x):=\int_0^x\frac{y-1}{k_i(y)}~e^{R(y-1)^2}dy$, with $R>0$ and $i=1,2$. Assume
\begin{Assumptions}\label{AssP1} The functions
$k_i\in C^0[0,1]\bigcap C^2[0,1)$  are such that $k_i(1)=0$, $k_i>0$ on
$[0,1)$ and 
\begin{enumerate}
\item $k_i$ satisfies Hypothesis \ref{BAss02}.1, for $i=1,2$,
%\item there exist $\varepsilon_i\in (0,1]$ and $M_i\in (0,2)$ such that
%the functions $\displaystyle\frac{(x-1)k_{i,x}}{k_i(x)}$\normalsize\:
%$\in L^{\infty}(1-\varepsilon_i,1)$,
%$\displaystyle\frac {(x-1)k_{i,x}(x)}{k_i(x)} \le M_i $, for all $x \in (0,1]$
%and $\displaystyle\left( \frac{(x-1)k_{i,x}(x)}{k_i(x)}\right)_{x}\in L^{\infty}(1-\varepsilon_i,1)$ $i=1,2$.
\item there exists a positive constant $C$ such that $k_1(x)\ge k_2(x)$ for all $x \in [0,1]$.
\end{enumerate}
\end{Assumptions}
Proceeding as in the proof of Theorem \ref{Carleman}, one can prove the next estimate:
\begin{Theorem}\label{Carleman1}
Assume Hypotheses  $\ref{ratesAss}$, $\ref{Assw}$  and $\ref{AssP1}$. Take  $f \in L^2_{\frac{1}{k_1}}(Q)$ and $g\in L^2_{\frac{1}{k_2}}(Q)$. Then,
there exist two strictly positive constants $C$ and $s_0$ such that every
solution $(z,y)\in L^2\big(Q_{T,A}; \mathbb{K}\big) \cap H^1\big(0, T; H^1(0,A;\mathbb{H})\big)$ of \eqref{adjoint_f}
satisfies, for all $s \ge s_0$,
\[
\begin{aligned}
&\int_{Q}\left(s \Theta z_x^2
                + s^3\Theta^3\text{\small$\displaystyle\Big(\frac{x-1}{k_1}\Big)^2$\normalsize}
                  z^2\right)e^{2s\varphi_1}dxdadt  
 \le  C \int_{Q}(f^2+ y^2)\frac{1}{k_1}dxdadt +C\int_{Q_{T,A}}\int_ \omega \frac{z^2}{k_1} dx dadt
\end{aligned}\]
and
\[
\begin{aligned}
\int_{Q}\left(s \Theta y_x^2
                + s^3\Theta^3\Big(\frac{x-1}{k_2}\Big)^2
                  y^2\right)e^{2s\varphi_2}dxdadt
&\le
C\left(\int_{Q}\text{\small$\frac{h^{2}}{k_2}$\normalsize}~dxdadt+\int_{Q_{T,A}}\int_ \omega \frac{y^2}{k_2} dx dadt\right).
\end{aligned}\]
\end{Theorem}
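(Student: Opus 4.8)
The plan is to mirror the proof of Theorem \ref{Carleman}, the only change being that the degeneracy now sits at the right endpoint, so in place of Theorem \ref{Cor2} one invokes its companion Theorem \ref{Cor1}. First I would decouple the system \eqref{adjoint_f}. Since the second equation for $y$ does not involve $z$, it is already a single degenerate equation of the form \eqref{single} with coefficient $k_2$, source $h$ and potential $\mu_{22}$. The first equation is likewise read as a single equation for $z$ with coefficient $k_1$ and potential $\mu_{11}$, once the coupling term is moved to the right-hand side, i.e. with source
\[
F:=f-\mu_{21}(t,a,x)\,y.
\]

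Next I would check the hypotheses needed to apply Theorem \ref{Cor1} to each scalar equation. By Hypothesis \ref{AssP1}.1 each $k_i$, $i=1,2$, satisfies Hypothesis \ref{BAss02}.1, and by Hypothesis \ref{ratesAss} the potentials $\mu_{11},\mu_{22}$ are nonnegative and continuous on $\bar Q$, so Hypothesis \ref{BAss02}.2 holds as well. Applying Theorem \ref{Cor1} to the $y$-equation with the weight $\bar\varphi_2$ of \eqref{funzioninew} yields at once the second inequality of the statement. Applying the same theorem to the $z$-equation with weight $\bar\varphi_1$ gives
\[
\int_{Q}\!\left(s \Theta z_x^2+ s^3\Theta^3\Big(\tfrac{x-1}{k_1}\Big)^2 z^2\right)\!e^{2s\bar\varphi_1}\,dxdadt\le C\left(\int_{Q}\frac{F^{2}}{k_1}\,dxdadt+\int_{Q_{T,A}}\!\int_\omega \frac{z^2}{k_1}\,dxdadt\right).
\]

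Finally I would absorb the coupling. Writing $F^2\le 2f^2+2\mu_{21}^2 y^2$ and using that $\mu_{21}\in C(\bar Q)$ is bounded, one gets
\[
\int_{Q}\frac{F^{2}}{k_1}\,dxdadt\le C\int_{Q}\frac{f^2+y^2}{k_1}\,dxdadt,
\]
which, substituted above, produces the first inequality. The only point requiring care is that the term $\int_Q y^2/k_1$ be finite, equivalently that $F$ genuinely belong to $L^2_{\frac{1}{k_1}}(Q)$ so that Theorem \ref{Cor1} is applicable; this is exactly where Hypothesis \ref{AssP1}.2 enters, since $k_1\ge k_2$ gives $\tfrac1{k_1}\le\tfrac1{k_2}$ and hence $\int_Q y^2/k_1\le \int_Q y^2/k_2<\infty$, the last integral being finite because $y\in L^2(Q_{T,A};\mathcal H^2_{\frac1{k_2}}(0,1))$. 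Beyond this bookkeeping the argument is identical to that of Theorem \ref{Carleman}, so no genuinely new difficulty arises.
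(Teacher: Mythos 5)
Your proposal is correct and follows exactly the route the paper intends: the paper proves Theorem \ref{Carleman1} by saying ``proceeding as in the proof of Theorem \ref{Carleman}'', i.e.\ decoupling \eqref{adjoint_f} into two scalar equations, applying the single-equation estimate (here Theorem \ref{Cor1} in place of Theorem \ref{Cor2}) with source $F=f-\mu_{21}y$ for the $z$-equation, and bounding $F^2\le 2f^2+2\mu_{21}^2y^2$. Your extra remark that Hypothesis \ref{AssP1}.2 ($k_1\ge k_2$) is what guarantees $\int_Q y^2/k_1<\infty$ is a useful clarification of a point the paper leaves implicit, but it is not a departure from the paper's argument.
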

As a consequence of the previous result, it holds:
\begin{Theorem}\label{Theorem4.4'} Assume Hypotheses $\ref{ratesAss}$, $\ref{Assw}$, $\ref{AssP1}$ and  \ref{conditionbeta}. Then, for every $\delta \in (0,A)$, 
there exists a strictly positive constant $C=C(\delta)$ such that every
solution $(z,y)\in L^2\big(Q_{T,A}; \mathbb{K}\big) \cap H^1\big(0, T; H^1(0,A;\mathbb{H})\big)$ of \eqref{adjoint}
satisfies
\eqref{stima1}
and
\eqref{stima2}.
Moreover, if $y_T(a,x)=0$ for all $(a,x) \in (0,  \bar a_2) \times (0,1)$ and  $z_T(a,x)=0$ for all $(a,x) \in (0,  \bar a_1) \times (0,1)$, one has
\[
\begin{aligned}
&\int_{Q_{A,1}} \frac{1}{k_2} y^2(T-\bar a_2, a,x) dxda \le C\int_{Q_{T,A}}\!\!\int_ \omega\frac{1}{k_2} y^2  dx dadt+ C\int_0^T \int_0^\delta\!\!\int_ 0^1 \frac{1}{k_2} y^2 dx dadt
\end{aligned}
\]
and
\[\begin{aligned}
 \int_{Q_{A,1}} \frac{z^2}{k_1} (T-\bar a_1,a,x) dxda &\le C\left(\int_{Q_{T,A}}\!\!\int_ \omega\left( \frac{1}{k_1} z^2 + \frac{1}{k_2} y^2\right)dx dadt+ \int_0^T \int_0^\delta\!\!\int_ 0^1\left( \frac{1}{k_1} z^2 + \frac{1}{k_2} y^2\right)dx dadt\right).
\end{aligned}\]
\end{Theorem}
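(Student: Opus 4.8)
The plan is to reproduce, essentially line by line, the argument used for Theorem \ref{Theorem4.4}, since the passage from degeneracy at $0$ to degeneracy at $1$ affects only two ingredients of that proof: the weight functions and the Hardy--Poincar\'e inequality. Concretely, throughout the proof I would replace $\varphi_i$ by $\bar\varphi_i$ (defined in \eqref{funzioninew}) and invoke Theorem \ref{Carleman1} and Theorem \ref{Cor1} in place of Theorem \ref{Carleman} and Theorem \ref{Cor2}, so that the weight $\ds\frac{x}{k_i}$ is everywhere replaced by $\ds\frac{x-1}{k_i}$. Since the statements \eqref{stima1} and \eqref{stima2} to be proved are formally identical to those already established, the whole skeleton of the derivation is unchanged.

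First I would rewrite \eqref{adjoint} as the pair \eqref{adjoint_1'}--\eqref{adjoint_2'} and derive, via the method of characteristic lines together with Hypothesis \ref{conditionbeta} and the terminal conditions $z(t,A,x)=y(t,A,x)=0$, exactly the same implicit representations \eqref{implicitformula}--\eqref{y(0)}. These formulas live in the $(t,a)$ variables and depend only on the semigroups $(S_i(t))_{t\ge0}$ generated by $A_i-\mu_{ii}\,Id$, hence they are insensitive to where $k_i$ degenerates and carry over verbatim. Next I would reproduce the energy estimates obtained by multiplying \eqref{h=0'} and \eqref{h=0'new} by $\ds-\frac{w}{k_2}$ and $\ds-\frac{W}{k_1}$ and integrating by parts: the boundary contributions at $x=0$ and $x=1$ vanish because of the homogeneous Dirichlet conditions, so \eqref{Eq1}, \eqref{Eq2} and \eqref{Eq2n} hold with the same constants, again independently of the degeneracy point. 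After the same case distinction $\bar a_1\le\bar a_2$ / $\bar a_1>\bar a_2$ and integration over the small age-windows, I would obtain the analogues of \eqref{t=011} and \eqref{t=011z}, split each right-hand side as $\int_0^\delta+\int_\delta^A$, apply Theorem \ref{Carleman1} (with $\tilde\Theta$ as in \eqref{thetatilde} and Remark \ref{remarkultimo}) on the far-from-zero-age region, and finally use \eqref{y(0)} and \eqref{z(0)} to absorb the traces $y(t,0,\cdot)$ and $z(t,0,\cdot)$ into $y_T$, $z_T$ and the $\omega$-terms. The homogeneous conclusion then follows by dropping the $y_T$, $z_T$ contributions when $y_T\equiv0$ on $(0,\bar a_2)\times(0,1)$ and $z_T\equiv0$ on $(0,\bar a_1)\times(0,1)$.

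The only step that genuinely requires attention is the substitute for the Hardy--Poincar\'e inequality \eqref{terminenuovo11}, which is precisely where the location of the degeneracy enters. The hard part will be to establish, for every $y\in\mathcal H^1_{\frac{1}{k_2}}(0,1)$,
\[
\int_0^1\frac{1}{k_2}\,y^2\,dx\le C\int_0^1\frac{1}{(1-x)^2}\,y^2\,dx\le C\int_0^1 y_x^2\,dx,
\]
the first inequality following from the degeneracy exponent $M<2$ in Hypothesis \ref{BAss02}.1 (which forces $\ds\frac{1}{k_2}\lesssim\frac{1}{(1-x)^2}$ near $x=1$, while $k_2$ stays bounded below away from $1$), and the second being the classical Hardy inequality for functions vanishing at $x=1$. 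Once this estimate is in hand it replaces \eqref{terminenuovo11} in every occurrence of \eqref{stimabo} and its variants, and the remainder of the computation --- including the splitting of the inner integral over the characteristic variable $s$ in the case $\bar a_1>\bar a_2$ --- is a verbatim transcription of the proof of Theorem \ref{Theorem4.4}.
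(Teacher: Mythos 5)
Your proposal is correct and follows exactly the route the paper intends: the paper states Theorem \ref{Theorem4.4'} as an immediate analogue of Theorem \ref{Theorem4.4}, obtained by rerunning that proof with Theorem \ref{Carleman1} (resp. Theorem \ref{Cor1}) in place of Theorem \ref{Carleman} (resp. Theorem \ref{Cor2}) and the weights $\bar\varphi_i$ in place of $\varphi_i$. Your explicit verification of the substitute for the Hardy--Poincar\'e inequality \eqref{terminenuovo11}, with $\frac{1}{(1-x)^2}$ replacing $\frac{1}{x^2}$ via the bound $k_2(x)\ge c(1-x)^{M}$ near $x=1$ coming from Hypothesis \ref{BAss02}.1, is the one genuinely location-dependent step and is handled correctly.
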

Moreover,
\begin{Theorem}\label{CorOb1''}Assume Hypotheses $\ref{ratesAss}$, $\ref{Assw}$, $\ref{AssP1}$ and  \ref{conditionbeta}. Then, for every $\delta \in (\gamma,A)$, where $\gamma = \max \{\bar a_1, \bar a_2\}$,
there exists a strictly positive constant $C=C(\delta)$ such that every
solution $(z,y)\in L^2\big(Q_{T,A}; \mathbb{K}\big) \cap H^1\big(0, T; H^1(0,A;\mathbb{H})\big)$ of \eqref{adjoint}
satisfies
\eqref{ribo}
and
\eqref{ribo1}.
\end{Theorem}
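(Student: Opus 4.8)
The plan is to repeat, essentially line by line, the argument proving Theorem \ref{CorOb1'}, carrying out throughout the three substitutions dictated by the fact that the degeneracy has moved from $0$ to $1$: (i) Theorem \ref{Carleman1} (resp. Theorem \ref{Cor1}) is invoked in place of Theorem \ref{Carleman} (resp. Theorem \ref{Cor2}); (ii) the weights $\varphi_i$ built from $p_i$ are replaced by the weights $\bar\varphi_i$ of \eqref{funzioninew} built from $\bar p_i$, and the degeneracy factor $\left(\frac{x}{k_i}\right)^2$ in the Carleman estimates is replaced by $\left(\frac{x-1}{k_i}\right)^2$; (iii) the Hardy--Poincar\'e inequality \eqref{terminenuovo11}, centered at $0$, is replaced by its analogue centered at $1$. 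The auxiliary weight $\tilde\Theta$ of \eqref{thetatilde} and Remark \ref{remarkultimo} are used exactly as before.

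First I would decouple \eqref{adjoint} into the scalar problems \eqref{adjoint_1'}--\eqref{adjoint_2'} and observe that the representation formulas along the characteristic lines, \eqref{implicitformula2}--\eqref{y(0)}, are untouched: they depend only on the semigroups $(S_i(t))$ and on Hypothesis \ref{conditionbeta}, not on where $k_i$ vanishes. Likewise, the energy identities obtained by multiplying \eqref{h=0'} by $-\frac{w}{k_2}$ and \eqref{h=0'new} by $-\frac{W}{k_1}$ and integrating over $(T_i,t)\times Q_{A,1}$ never see the location of the degeneracy, so \eqref{Eq1}, \eqref{Eq2} and \eqref{Eq2n} hold verbatim, with the same choices of $\epsilon$ and $\varsigma$.

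Then I would run the two cases $\bar a_1\le \bar a_2$ and $\bar a_1>\bar a_2$ precisely as in Theorem \ref{CorOb1'}. After integrating \eqref{Eq1}, \eqref{Eq2} (resp. \eqref{Eq2n}) in $t$ and splitting the age integral as $\int_0^{\delta-\bar a_i}+\int_{\delta-\bar a_i}^A$, the large--age contribution is treated by the Hardy--Poincar\'e inequality followed by Theorem \ref{Carleman1} applied on the subdomains $\bar Q_i$ with the weight $\tilde\Theta$; the boundary traces $y(t,0,\cdot)$ and $z(t,0,\cdot)$ are then eliminated through \eqref{y(0)} and \eqref{z(0)}, producing the $y_T$, $z_T$ terms supported on $(0,\delta)$. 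The small--age contribution, where $T-t\le \delta-a\le A-a$, is handled by the first formulas in \eqref{implicitformula1} and \eqref{implicitformula3}, the change of variables $\sigma=a+z$ (exactly as in \eqref{secondanew}), and the bound $\frac{1}{k_1}\le\frac{1}{k_2}$ coming from Hypothesis \ref{AssP1}.2, again reducing everything to integrals of $y_T^2/k_2$ and $z_T^2/k_1$ over $(0,\delta)$. Summing the two contributions yields \eqref{ribo} and \eqref{ribo1}.

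Since every structural step transcribes mechanically, the only genuinely new verification — and the part requiring care — is step (iii): one must check that the Hardy--Poincar\'e inequality used in \eqref{terminenuovo11} survives when $k_2$ degenerates at $1$ instead of at $0$, that is,
\[
\int_0^1\frac{y^2}{k_2}\,dx\le C\int_0^1\frac{y^2}{(x-1)^2}\,dx\le C\int_0^1 y_x^2\,dx .
\]
The first inequality holds because Hypothesis \ref{AssP1}.1 (with $M<2$) forces $\frac{1}{k_2}\le \frac{C}{(x-1)^2}$ on $(0,1)$, and the second is the Hardy--Poincar\'e inequality centered at $1$ (available in \cite{b}), valid since $y\in H^1_0(0,1)$ vanishes at both endpoints. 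Once this replacement is in place, the remainder of the proof is a faithful copy of that of Theorem \ref{CorOb1'}.
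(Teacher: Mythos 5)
Your proposal is correct and follows exactly the route the paper intends: the paper gives no separate proof of Theorem \ref{CorOb1''}, relying on the same mechanical transposition of the proof of Theorem \ref{CorOb1'} (Theorem \ref{Carleman1}/\ref{Cor1} in place of Theorem \ref{Carleman}/\ref{Cor2}, the weights $\bar\varphi_i$ in place of $\varphi_i$, and the Hardy--Poincar\'e inequality centered at $1$ in place of \eqref{terminenuovo11}) that you describe. Your explicit verification that Hypothesis \ref{AssP1}.1 yields $\frac{1}{k_2}\le\frac{C}{(x-1)^2}$, so that the Hardy step survives, is precisely the one point that needs checking, and it is done correctly.
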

Again, using a density argument one can prove \eqref{ribo} and \eqref{ribo1} even  if  $(z,y)\in C([0,T]; L^2(0,A; 
\mathbb{L})) \cap L^2 (0,T; H^1(0,A;\mathbb{H}))$; hence, Remark \ref{regolarita} still holds  if $k_i(1)=0$, for $i=1,2$.
\section{Null controllability via observability inequality}\label{section4}
In this section we will deduce a null controllability result for \eqref{1} by
Theorems \ref{CorOb1'}, \ref{CorOb1''} and Remark \ref{regolarita}. Actually, thanks to Theorems  \ref{CorOb1'} and \ref{CorOb1''}, we can obtain a null controllability result for the following intermediate problem:
\begin{equation} \label{1_medio}
\begin{cases}
\ds\frac{\partial u}{\partial t}+\frac{\partial u}{\partial a}- k_1(x)u_{xx} + \mu_{11}(t,x,a) u =g(t,x,a) \chi_{\omega}, & (t,a,x)\in \tilde Q_1, 
\\
\ds\frac{\partial v}{\partial t}+\frac{\partial v}{\partial a}-k_1(x)v_{xx} + \mu_{22}(t,x,a) v-\mu_{21}(t,x,a) u=0, & (t,a,x)\in \tilde Q_2,
\\
u(t, a, 0) = u(t,a, 1) = 0, & (t,a)\in \tilde Q_{1,T,A},\\
 v(t,a, 0) = v(t, a, 1) =0,  & (t,a)\in \tilde Q_{2,T,A},\\
u(T-\bar a_1, a, x) =u_0(a,x),\quad v(T-\bar a_2,a, x) = v_0(a,x), &  (a,x) \in Q_{A,1},\\
u(t, 0, x)=\int_0^A \beta_1 (a, x)u (t, a, x) da,  & (t,x) \in \tilde Q_{1,T,1},\\
v(t, 0, x)=\int_0^A \beta_2 (a, x)v(t, a, x) da,  & (t,x) \in \tilde Q_{2, T,1},
\end{cases}
\end{equation}
where, we recall, $\tilde Q_i = (T-\bar a_i, T)\times Q_{A,1}$, $ \tilde Q_{i,T,1}=  (T-\bar a_i, T) \times (0,1)$ and $ \tilde Q_{i, T,A}=  (T-\bar a_i, T) \times (0,A)$, $i=1,2$.
In particular, the following result  holds:
\begin{Theorem}\label{ultimo}Assume Hypotheses $\ref{ratesAss}$, $\ref{Assw}$,  \ref{conditionbeta} and $\ref{AssP}$ or $\ref{AssP1}$ and  suppose $\bar a_1= \bar a_2$.  Fix
 $T >0$ and $(u_0, v_0) \in L^2(0,A;\mathbb{L} )$. Then
 for every $\delta \in (\gamma,A)$,
 there exist a contros  $g_\delta \in L^2_{\frac{1}{k_1}}(\tilde Q_1)$ such that the solution $(u_\delta, v_\delta)\in C([T-\bar a_1,T]; L^2(0,A; 
\mathbb{L})) \cap L^2 (T-\bar a_1,T; H^1(0,A;\mathbb{H}))$ of 
\eqref{1_medio}
satisfies
\begin{equation}\label{chissa}
u_\delta(T,a,x) =v_\delta(T,a,x) =0 \quad \text{a.e. } (a,x) \in (\delta, A) \times (0,1).
\end{equation}
Moreover, there exists $C=C(\delta)>0$, such that
\begin{equation}\label{stimah}
\|g_\delta\|_{L^2_{\frac{1}{k_1}}(\tilde Q_1)} \le  C\left(\|v_0\|_{L^2_{\frac{1}{k_2}}(Q_{A,1})}+  \|u_0\|_{L^2_{\frac{1}{k_1}}(Q_{A,1})}\right).
\end{equation}
\end{Theorem}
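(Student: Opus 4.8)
The plan is to establish the null controllability of \eqref{1_medio} on $P_\delta:=(\delta,A)\times(0,1)$ by a penalized duality (HUM) argument resting on the observability inequalities \eqref{ribo}--\eqref{ribo1}. Since $\bar a_1=\bar a_2=:\bar a$, both equations of \eqref{1_medio} are posed on $\tilde Q_1=(T-\bar a,T)\times Q_{A,1}$, and the target \eqref{chissa} is a condition at $t=T$ on $P_\delta$. I would let the terminal data $(z_T,y_T)$ of the adjoint problem \eqref{adjoint} range over the Hilbert space $\mathcal D_\delta$ of pairs supported in $P_\delta$, equipped with the norm of $L^2_{\frac{1}{k_1}}(P_\delta)\times L^2_{\frac{1}{k_2}}(P_\delta)$. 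The point of this restriction is that for such data the residual terms $\int_0^\delta(\cdots)\,dx\,da$ on the right-hand sides of \eqref{ribo}--\eqref{ribo1} vanish, so that the observation over $\omega$ alone dominates the traces of $z$ and $y$ at $t=T-\bar a$.

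For $\epsilon>0$ I would then minimise over $\mathcal D_\delta$ the penalized functional
\[
F_\epsilon(z_T,y_T)=\frac12\int_{T-\bar a}^T\!\!\int_0^A\!\!\int_\omega\frac{z^2}{k_1}\,dx\,da\,dt+\epsilon\,\|(z_T,y_T)\|_{\mathcal D_\delta}+\int_{Q_{A,1}}\frac{u_0\,z(T-\bar a)}{k_1}\,dx\,da+\int_{Q_{A,1}}\frac{v_0\,y(T-\bar a)}{k_2}\,dx\,da,
\]
where $(z,y)$ solves \eqref{adjoint} with terminal datum $(z_T,y_T)$. This functional is continuous and strictly convex, and, granted coercivity, it admits a unique minimiser $(\hat z_T^\epsilon,\hat y_T^\epsilon)$; as control I would take the trace $g_\epsilon:=\hat z^\epsilon$ of the first adjoint component on $\omega$. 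Coercivity is precisely where \eqref{ribo}--\eqref{ribo1} must enter.

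Pairing \eqref{1_medio} with $(z/k_1,\,y/k_2)$ and integrating by parts (the $a=A$ boundary term disappears because $z(t,A)=y(t,A)=0$, while the renewal conditions at $a=0$ are matched by the $\beta_i$ terms in \eqref{adjoint}) yields the duality identity relating the control term $\int_\omega z\,g/k_1$, the initial data $(u_0,v_0)$, and the final state $(u_\epsilon,v_\epsilon)(T)$. The Euler--Lagrange equation for $F_\epsilon$ then gives, for every test pair in $\mathcal D_\delta$,
\[
\int_{P_\delta}\frac{u_\epsilon(T)\,\zeta_T}{k_1}\,dx\,da+\int_{P_\delta}\frac{v_\epsilon(T)\,\eta_T}{k_2}\,dx\,da=-\epsilon\,\frac{\langle(\hat z_T^\epsilon,\hat y_T^\epsilon),(\zeta_T,\eta_T)\rangle}{\|(\hat z_T^\epsilon,\hat y_T^\epsilon)\|_{\mathcal D_\delta}},
\]
whence $\|(u_\epsilon(T),v_\epsilon(T))\|_{\mathcal D_\delta}\le\epsilon$. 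Comparing the minimality $F_\epsilon(\hat z_T^\epsilon,\hat y_T^\epsilon)\le F_\epsilon(0,0)=0$ with the coercivity furnishes the $\epsilon$-uniform bound $\|g_\epsilon\|_{L^2_{\frac{1}{k_1}}(\tilde Q_1)}\le C\big(\|u_0\|_{L^2_{\frac{1}{k_1}}(Q_{A,1})}+\|v_0\|_{L^2_{\frac{1}{k_2}}(Q_{A,1})}\big)$. I would then extract a weakly convergent subsequence $g_\epsilon\rightharpoonup g_\delta$; by continuity of the control-to-state map the states converge, the $O(\epsilon)$ right-hand side above passes to $0$ (giving \eqref{chissa}), while weak lower semicontinuity of the norm preserves the bound (giving \eqref{stimah}).

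The hard part will be the coercivity of $F_\epsilon$, and specifically the cascade structure together with the single control. The control acts only in the first equation, so the quadratic part of $F_\epsilon$ sees only the observation $\int_\omega z^2/k_1$, whereas the estimate \eqref{ribo1} for the $z$-component also charges the observation $\int_\omega y^2/k_2$ of the decoupled adjoint variable $y$. The decisive step is therefore to absorb $\int_\omega y^2/k_2$ into $\int_\omega z^2/k_1$ (up to lower order terms) by exploiting the coupling term $\mu_{21}y$ in the adjoint $z$-equation of \eqref{adjoint}: on a subregion of $\omega$ where the coupling is nondegenerate one recovers $y$ from $\partial_t z+\partial_a z+k_1z_{xx}-\mu_{11}z+\beta_1 z(\cdot,0,\cdot)$, and a local energy (Caccioppoli-type) estimate together with the Carleman bound of Theorem \ref{Carleman} controls the derivatives of $z$ by the single observation of $z$. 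Once this absorption is in place, \eqref{ribo} and \eqref{ribo1} combine into the genuine one-control observability $\|(z_T,y_T)\|_{\mathcal D_\delta}^2\le C\int_\omega z^2/k_1$, and the remainder of the scheme is routine.
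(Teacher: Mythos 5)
Your overall HUM/penalization scheme is a genuinely different route from the paper's. The paper never minimizes over pairs of terminal data: it runs two \emph{sequential} scalar minimizations, first minimizing $J(g_2)=\frac12\int\int_\omega \hat y^2/k_2+\int v_0\,y(T-\bar a_2)/k_2$ over terminal data $y_T=g_2\chi_{(\delta,A)}$ of the decoupled $y$-equation, which via \eqref{ribo} yields $\|\hat y\|_{L^2_{1/k_2}((T-\bar a_1,T)\times(0,A)\times\omega)}\le C\|v_0\|$; only then does it minimize the analogous functional $F(g_1)$ for the $z$-equation, in which $-\mu_{21}\hat y$ is a \emph{frozen} source. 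At that stage the two-observation inequality \eqref{ribo1} is enough, because the $\int_\omega y^2/k_2$ term on its right-hand side has already been bounded by $\|v_0\|^2$. The control is then $g=\hat z\chi_\omega$, and the target identities for $u(T)$ and $v(T)$ come from the Euler--Lagrange conditions of the two functionals combined with duality. No absorption of the $y$-observation into the $z$-observation is ever performed.

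This is precisely where your proposal has a genuine gap. The coercivity of your $F_\epsilon$ requires the single-observation observability inequality $\|(z_T,y_T)\|^2_{\mathcal D_\delta}\le C\int_{\tilde Q_1}\int_\omega z^2/k_1\,dx\,da\,dt$, which is strictly stronger than Theorems \ref{CorOb1'}--\ref{CorOb1''} and is proved nowhere in the paper. Your sketch of the absorption step presupposes that $\mu_{21}$ is bounded below by a positive constant on a cylinder $(T-\bar a_1,T)\times(0,A)\times\omega_0$ with $\omega_0\subset\omega$; Hypothesis \ref{ratesAss} only gives $\mu_{21}\ge0$, so this is an additional assumption not available to you. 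Without it coercivity actually fails: take $z_T=0$ and send $\|y_T\|\to\infty$ along a direction with $\int_{Q_{A,1}} v_0\,y(T-\bar a_2)/k_2<0$; if $\mu_{21}$ vanishes near $\omega$ the quadratic part $\int_\omega z^2/k_1$ does not see $y_T$ at all and $F_\epsilon\to-\infty$. Even granting the positivity, the absorption is far from ``routine'' in this setting: the adjoint $z$-equation contains the transport term $\partial_a z$ and the nonlocal renewal term $\beta_1(a,x)z(t,0,x)$, the two components carry different degenerate weights $1/k_1$, $1/k_2$, and the Carleman estimates of Theorem \ref{Carleman} use two different weight functions $\varphi_1\neq\varphi_2$, so the exponential factors do not match when one tries to reabsorb the derivative terms of $z$ into the left-hand side of the Carleman inequality for $y$. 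Unless you either prove this one-observation inequality in full or restructure the argument along the paper's sequential lines (where the available two-observation inequalities suffice), the proof is incomplete at its central step.
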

Before proving the previous result observe that, if $\bar a_1= \bar a_2$, then
$(u,v)$ solves \eqref{1_medio} if and only if $v$ satisfies 
\begin{equation} \label{Pv}
\begin{cases}
\ds\frac{\partial v}{\partial t}+\frac{\partial v}{\partial a}-k_2(x)v_{xx} + \mu_{22}(t,x,a) v-\mu_{21}(t,x,a) u=0, & (t,a,x)\in \tilde Q,
\\
v(t,a, 0) = v(t, a, 1) = 0, & (t,a)\in \tilde Q_{T,A},\\
v(T-\bar a_1,a, x) = v_0(a,x), &  (a,x) \in Q_{A,1},\\
v(t, 0, x)=\int_0^A \beta_2 (a, x)v(t, a, x) da,  & (t,x) \in \tilde Q_{T,1}
\end{cases}
\end{equation}
where $u$ solves
\begin{equation} \label{Pu}
\begin{cases}
\ds\frac{\partial u}{\partial t}+\frac{\partial u}{\partial a}- k_1(x)u_{xx} + \mu_{11}(t,x,a) u =g(t,x,a) \chi_{\omega}, & (t,a,x)\in \tilde Q, 
\\
u(t, a, 0) = u(t,a, 1) = 0, & (t,a)\in \tilde Q_{T,A},\\
u(T-\bar a_1, a, x) =u_0(a,x), &  (a,x) \in Q_{A,1},\\
u(t, 0, x)=\int_0^A \beta_1 (a, x)u (t, a, x) da,  & (t,x) \in \tilde Q_{T,1},
\end{cases}
\end{equation}
where $\tilde Q:= (T-\bar a_1, T) \times Q_{A,1}$,  $\tilde Q_{T,A}= (T-\bar a_1, T) \times (0,A)$ and $\tilde Q_{T,1}:= (T-\bar a_1, T)\times (0,1)$.

\begin{proof}[Proof of Theorem \ref{ultimo}]
Take $(g_1, g_2) \in L^2(0, A; \mathbb{L})$ such that $g_i(A,x)=0$ in $(0,1)$, $i=1,2$, and fix $\delta \in (\gamma, A)$.
Let $y$ be the solution of
\begin{equation}\label{ad_2'}
\begin{cases}
\ds \frac{\partial y}{\partial t} + \frac{\partial y}{\partial a}
+k_2(x)y_{xx}-\mu_{22}(t, a, x)y =-\beta_2(a,x)y(t,0,x),& (t,a,x) \in \tilde Q,
\\
 y(t, a, 0)=y(t, a, 1)=0, & (t,a) \in \tilde Q_{T,A},\\
y(t,A,x)=0, & (t,x) \in \tilde Q_{T,1},\\
y(T,a,x)= y_T(a,x)):= 
g_2(a,x)\chi_{(\delta, A)}, & (a,x) \in Q_{A,1}.
\end{cases}
\end{equation}
Now, fixed $v_0 \in L^2(0,A; L^2_{\frac{1}{k_2}}(0,1))$, define
\[
\begin{aligned}
J(g_2) =\frac{1}{2}\int_{T_2}^T \int_0^A\int_\omega \frac{y^2}{k_2}  dxda dt +  \int_0^A\int_0^1 \frac{1}{k_2}y(T-\bar a_2,a, x)v_0(a,x) dxda.
\end{aligned}
\]
The functional $J$ is  strictly convex, continuous and coercive over the Hilbert space $\mathcal H$ defined by the completion of $L^2(\delta, A; L^2_{\frac{1}{k_2}}(0,1))$  with respect to the norms $\|v\|_{L^2(\tilde Q_{T,A}\times \omega)}$ (see \cite{Ain3}). Thus, there exists a unique minimum, $\hat g_2$, of $J$ and $\hat g_2(A,x)=0$ in $(0,1)$. Let $\hat y$  be the solution  of\eqref{ad_2'} associated to $\hat g_2$.

Now, fixed  $u_0 \in L^2(0,A; L^2_{\frac{1}{k_1}}(0,1))$,
 let $z$ be the solution of 
\begin{equation}\label{ad}
\begin{cases}
\ds \frac{\partial z}{\partial t} + \frac{\partial z}{\partial a}
+k_1(x)z_{xx}-\mu_{11}(t, a, x)z =-\mu_{21}(t, a, x)\hat y - \beta_1(a,x)z(t,0,x),& (t,a,x) \in \tilde Q,  \\
  z(t, a, 0)=z(t, a, 1)=0, & (t,a) \in \tilde Q_{T,A},\\
  z(t,A,x)=0, & (t,x) \in \tilde Q_{T,1},\\
z(T,a,x)= z_T(a,x)):= 
g_1(a,x)\chi_{(\delta, A)}, & (a,x) \in Q_{A,1}.
\end{cases}
\end{equation}
and
 define
\[
\begin{aligned}
F(g_1) &=\frac{1}{2}\int_{T_1}^T \int_0^A\int_\omega \frac{z^2}{k_1}  dxda dt+  \int_0^A\int_0^1 \frac{1}{k_1}z(T-\bar a_1,a, x)u_0(a,x) dxda.
\end{aligned}
\]
As before, there exists a unique minimum, $\hat g_1$, of $F$ and $\hat g_1(A,x)=0$ in $(0,1)$. Let $\hat z$  be the solution  of\eqref{ad} associated to $\hat g_1$.

 Since $\hat g_2$ is the minimum of $J$, it results
\begin{equation}\label{*2}
\begin{aligned}
0=\left[ \frac{d}{dt} J(\hat g_2 + t g_2)\right]_{t=0} = \int_{T-\bar a_1}^T\int_0^A \int_\omega \frac{1}{k_2} y\hat y  dxdadt  + \int_0^A\int_0^1 \frac{1}{k_2} y({T-\bar a_1},a, x) v_0(a, x)  dxda,
\end{aligned}
\end{equation}
for all $g_2 \in L^2(Q_{A,1})$  such that $g_2(A,x)=0$ in $(0,1)$. 
Analogously, we have
\begin{equation}\label{a2''}
\begin{aligned}
0=\left[ \frac{d}{dt} F(\hat g_1 + t g_1)\right]_{t=0} 
& =\int_{T-\bar a_1}^T\int_0^A \int_\omega \frac{1}{k_1} z\hat z  dxdadt+ \int_0^A\int_0^1 \frac{1}{k_1}z({T-\bar a_1},a, x) u_0(a, x)  dxda,
\end{aligned}
\end{equation}
for all $g_1 \in L^2(Q_{A,1})$  such that $g_1(A,x)=0$ in $(0,1)$.
In particular, for $g_2 = \hat g_2$ and $g_1= \hat g_1$, it results
\begin{equation}\label{a2}
\int_{T-\bar a_1}^T \int_0^A\int_\omega  \frac{\hat y^2}{k_2} dxda dt =- \int_0^A\int_0^1\frac{1}{k_2} \hat y(T-\bar a_1,a,x) v_0(a,x)  dxda,
\end{equation}
and
\begin{equation}\label{a2'}
\int_{T-\bar a_1}^T \int_0^A\int_\omega  \frac{\hat z^2}{k_1} dxda dt   =- \int_0^A\int_0^1\frac{1}{k_1} \hat z(T-\bar a_1,a,x) u_0(a,x)  dxda.
\end{equation}
By H\"older's inequality, Remark \ref{regolarita}, Theorems \ref{CorOb1'} and \ref{CorOb1''} applied to $\hat y$ in $\tilde Q_2$ and  using the fact that $y_T(a,x)=0$ for all $(a,x) \in (0, \delta)\times (0,1)$, one has
\begin{equation}\label{a3}
\begin{aligned}
&\left| \int_0^A\int_0^1\frac{1}{k_2} \hat y({T-\bar a_1},a, x) v_0(a, x)  dxda \right| \le \left(\int_0^A\int_0^1\frac{1}{k_2} \hat y^2({ T-\bar a_1},a, x)  dxda\right)^{\frac{1}{2}} \left(\int_0^A\int_0^1 \frac{1}{k_2}v_0^2(a,x) dxda\right)^{\frac{1}{2}}\\
& \le C\left(\int_{T-\bar a_1}^T \int_0^A\int_ \omega \frac{\hat y^2}{k_2} dx dadt\right)^{\frac{1}{2}} \|v_0\|_{L^2_{\frac{1}{k_2}}(Q_{A,1})}.
\end{aligned}
\end{equation}
Thus, by \eqref{a2} and \eqref{a3}, it follows
\begin{equation}\label{a4}
\begin{aligned}
 \int_{T-\bar a_1}^T\int_0^A\io \frac{1}{k_2}\hat y^2(t,a, x) dxda dt\le  C \left(\int_{T-\bar a_1}^T \int_0^A\int_ \omega \frac{\hat y^2}{k_2} dx dadt\right)^{\frac{1}{2}}\|v_0\|_{L^2_{\frac{1}{k_2}}(Q_{A,1})}.
\end{aligned}
\end{equation}
Hence
\begin{equation}\label{stimay}
\left( \int_{T-\bar a_1}^T\int_0^A\io \frac{\hat y^2}{k_2}dxda dt\right)^{\frac{1}{2}} \le C\|v_0\|_{L^2_{\frac{1}{k_2}}(Q_{A,1})}.
\end{equation}
Analogously, by H\"older's inequality, Remark \ref{regolarita}, Theorems \ref{CorOb1'} and \ref{CorOb1''} applied to $\hat z$ in $\tilde Q$ and using the fact that $z_T(a,x) =0$ for all $(a,x) \in (0, \delta)\times (0,1)$, one has
\begin{equation}\label{a3'}
\begin{aligned}
&\left| \int_0^A\int_0^1\frac{1}{k_1} \hat z({T-\bar a_1},a, x) u_0(a, x)  dxda \right| \le \left(\int_0^A\int_0^1 \frac{1}{k_1}u_0^2(a,x) dxda\right)^{\frac{1}{2}} \left(\int_0^A\int_0^1\frac{1}{k_1} \hat z^2({ T-\bar a_1},a, x)  dxda\right)^{\frac{1}{2}}\\
& \le C \|u_0\|_{L^2_{\frac{1}{k_1}}(Q_{A,1})}\left(\int_{T-\bar a_1}^T \int_0^A\int_ \omega \frac{\hat z^2}{k_1} dx dadt+\int_{T-\bar a_1}^T \int_0^A\int_ \omega \frac{\hat y^2}{k_2} dx dadt\right)^{\frac{1}{2}}\\
&\le 2C\|u_0\|_{L^2_{\frac{1}{k_1}}(Q_{A,1})}^2+\frac{1}{2}\left(\int_{T-\bar a_1}^T \int_0^A\int_ \omega \frac{\hat z^2}{k_1} dx dadt+\int_{T-\bar a_1}^T \int_0^A\int_ \omega \frac{\hat y^2}{k_2} dx dadt\right).
\end{aligned}
\end{equation}
In the last inequality we have used  the Cauchy's inequality $\ds ab \le \epsilon a^2 + \frac{1}{\epsilon}b^2$ with $\ds \epsilon =\frac{1}{2}$.
Thus, by \eqref{a2'}, \eqref{stimay} and \eqref{a3'}
%\begin{equation}\label{a4'}
\[
\begin{aligned}
\frac{1}{2} \int_{T-\bar a_1}^T \int_0^A\int_ \omega \frac{\hat z^2}{k_1} dx dadt& \le \frac{1}{2} \int_{T-\bar a_1}^T \int_0^A\int_ \omega \frac{\hat y^2}{k_2} dx dadt+   2C \|u_0\|_{L^2_{\frac{1}{k_1}}(Q_{A,1})} ^2\\
&\le C\left(\|v_0\|_{L^2_{\frac{1}{k_2}}(Q_{A,1})}^2+  \|u_0\|_{L^2_{\frac{1}{k_1}}(Q_{A,1})}^2\right).
\end{aligned}
\]
Hence
\begin{equation}\label{stimaz}
\left( \int_{T-\bar a_1}^T\int_0^A\io \frac{\hat z^2}{k_1}dxda dtt\right)^{\frac{1}{2}}\le C\left(\|v_0\|_{L^2_{\frac{1}{k_2}}(Q_{A,1})}+  \|u_0\|_{L^2_{\frac{1}{k_1}}(Q_{A,1})}\right).
\end{equation}
Now, define 
$
g:=\hat z\chi_\omega 
$
and let $u$ be the solution of \eqref{Pu} in $\tilde Q_1$ associated to $g$ and $u_0$ and  let $v$ be the solution of \eqref{Pv} in $\tilde Q_2$ associated to $u$ and $v_0$. By \eqref{stimaz}, it follows
\[
\|g\|_{L^2_{\frac{1}{k_1}}(\tilde Q_1)} = \left( \int_{T-\bar a_1}^T\int_0^A\io \frac{\hat z^2}{k_2}dxda dt\right)^{\frac{1}{2}}\le C\left(\|v_0\|_{L^2_{\frac{1}{k_2}}(Q_{A,1})}+  \|u_0\|_{L^2_{\frac{1}{k_1}}(Q_{A,1})}\right).
\]
Moreover, multiplying the equation of \eqref{ad} by $\ds\frac{u}{k_1}$ and integrating over $\tilde Q_1$, one has:
%0& = \int_{\tilde Q_1} \left(\ds \frac{\partial z}{\partial t} + \frac{\partial z}{\partial a}
%+k_1(x)z_{xx}-\mu_{11}(t, a, x)z +\mu_{21}%(t, a, x)\hat y + \beta_1(a,x)z(t,0,x)%\right)  \frac{u}{k_1}dxda dt %\Longleftrightarrow \\
\[
\begin{aligned}
0& = \int_\delta^A \int_0^1\frac{1}{k_1}u(T,a,x)g_1(a,x)\  dxda -\int_{Q_{A,1}}\frac{1}{k_1} u_0 z(T-\bar a_1,a,x)  dxda - \int_{T-\bar a_1}^T\int_0^1\frac{1}{k_1} z(t,0,x)u(t,0,x)  dxdt\\
& +\int_{\tilde Q_1} \frac{1}{k_1}  \beta_1(a,x) z(t,0,x) u(t,a,x) dxdadt+ \int_{\tilde Q_1}  \mu_{21}(t,a,x) \frac{\hat yu}{k_1} dxdadt\\
&- \int_{\tilde Q_1}\frac{z}{k_1}\left(\frac{\partial u}{\partial t} + \frac{\partial u}{\partial a}
-k_1(x)u_{xx}+\mu_{11}(t, a, x)u\right)   dxda dt.
\end{aligned}
\]
Recall that $u(t, 0, x)=\int_0^A \beta_1 (a, x)u (t, a, x) da$ and
 $\ds\frac{\partial u}{\partial t} + \frac{\partial u}{\partial a}
-k_1(x)u_{xx}+\mu_{11}(t, a, x)u= g\chi_\omega $; hence
\[
\begin{aligned}
0&=\int_\delta^A \int_0^1\frac{1}{k_1}u(T,a,x)g_1(a,x)\  dxda -\int_{Q_{A,1}}\frac{1}{k_1} u_0 z(T-\bar a_1,a,x)  dxda\\
&- \int_{T-\bar a_1}^T\int_0^A \int_\omega\frac{z\hat z}{k_1}  dxda dt + \int_{\tilde Q_1} \mu_{21}\frac{\hat yu}{k_1} dxdadt.
\end{aligned}
\]
Thus, being by \eqref{a2''},
\[
\int_{T- \bar a_1}^T\int_0^A \int_\omega \frac{1}{k_1} z \hat z  dxdadt=- \int_0^A\int_0^1 \frac{1}{k_1}z(T-\bar a_1,a, x) u_0(a, x)  dxda,
\]
it follows
\begin{equation}\label{EQu}
\int_\delta^A \int_0^1\frac{1}{k_1}u(T,a,x)g_1(a,x)\  dxda= -\int_{\tilde Q_1} \mu_{21} \frac{u \hat y}{k_1}dxdadt
\end{equation}
for all $g_1\in L^2_{\frac{1}{k_1}}(Q_{A,1})$   with $g_1(A,x)=0$ in $(0,1)$, $i=1,2$. In particular, for $g_1 \equiv0$, it results
\[
\int_{\tilde Q_1} \mu_{21} \frac{u \hat y}{k_1}dxdadt=0.
\]
Hence, by \eqref{EQu},
\begin{equation}\label{EQunew}
\int_\delta^A \int_0^1\frac{1}{k_1}u(T,a,x)g_1(a,x)\  dxda= 0
\end{equation}
for all $g_1\in L^2_{\frac{1}{k_1}}(Q_{A,1})$   with $g_1(A,x)=0$ in $(0,1)$, $i=1,2$. Thus,
\[
u(T,a,x)=0 \quad \text{a.e.}  (a,x) \in (\delta,A) \times (0,1).
\]
 Now, multiplying the equation of \eqref{ad_2'} by $\ds\frac{v}{k_2}$ and integrating over $\tilde Q_2$, one has:
%0& = \int_{\tilde Q_2} \left(\frac{\partial y}{\partial t} + \frac{\partial y}{\partial a}
%+k_2(x) y_{xx}-\mu_{22}(t, a, x) y +\beta_2(a,x) y(t,0,x)\right)  \frac{v}{k_2}%dxda dt \Longleftrightarrow \\

\[
\begin{aligned}
0& = \int_\delta^A \int_0^1\frac{1}{k_2}v(T,a,x) g_2(a,x)\  dxda -\int_{Q_{A,1}}\frac{1}{k_2} v_0  y(T-\bar a_1,a,x)  dxda \\
&- \int_{T-\bar a_1}^T\int_0^1\frac{1}{k_2} y(t,0,x) v(t,0,x)  dxdt+\int_{\tilde Q_2} \frac{1}{k_2}  \beta_2(a,x) \hat y(t,0,x) v(t,a,x) dxdadt\\
&- \int_{\tilde Q_2}\frac{ y}{k_2}\left(\frac{\partial v}{\partial t} + \frac{\partial v}{\partial a}
-k_2(x)v_{xx}+\mu_{22}(t, a, x)v\right)   dxda dt.
\end{aligned}
\]
Recall that $v(t, 0, x)=\int_0^A \beta_2 (a, x)v (t, a, x) da$ and
 $\ds\frac{\partial v}{\partial t} + \frac{\partial v}{\partial a}
-k_2(x)v_{xx}+\mu_{22}(t, a, x)v= \mu_{21}u$; hence
\[
\begin{aligned}
0&=\int_\delta^A \int_0^1\frac{1}{k_2}v(T,a,x)\ g_2(a,x)\  dxda -\int_{Q_{A,1}}\frac{1}{k_2} v_0  y(T-\bar a_1,a,x)  dxda- \int_{\tilde Q_2} \mu_{21}\frac{u y}{k_2} dxdadt.
\end{aligned}
\]
Thus, being by \eqref{*2},
it follows
\begin{equation}\label{EQv}
\int_\delta^A \int_0^1\frac{1}{k_2}v(T,a,x) g_2(a,x)\  dxda=\int_{\tilde Q_2}\mu_{21} \frac{u y}{k_2}dxdadt
\end{equation}
for all $g_2 \in L^2_{\frac{1}{k_2}}(Q_{A,1})$   with $g_2(A,x)=0$ in $(0,1)$. In particular, for $g_2 \equiv0$, it follows
\[
\int_{\tilde Q_2}\mu_{21} \frac{u y}{k_2}dxdadt=0.
\]
Proceeding as before,  we can conclude that 
\[
v(T,a,x)=0 \quad \text{a.e.} \;  (a,x) \in (\delta,A) \times (0,1).\]
Hence, the thesis follows.
\end{proof}

\begin{Theorem}\label{ultimonew}Assume Hypotheses $\ref{ratesAss}$, $\ref{Assw}$,  \ref{conditionbeta} and $\ref{AssP}$ or $\ref{AssP1}$, suppose $k_1(x)=k_2(x)$ for all $x \in (0,1)$ and $\bar a_1=\bar a_2$. Fix  $T >0$ and $(u_0, v_0) \in L^2(0,A;\mathbb{L} )$. Then
 for every $\delta \in (\gamma,A)$, 
 there exist a control  $g_\delta \in L^2_{\frac{1}{k_1}}(Q)$ such that the solution $(u_\delta, v_\delta)\in C([0,T]; L^2(0,A; 
\mathbb{L})) \cap L^2 (0,T; H^1(0,A;\mathbb{H}))$ of   \eqref{1}
satisfies
\[
u_\delta(T,a,x) =v_\delta(T,a,x) =0 \quad \text{a.e. } (a,x) \in (\delta, A) \times (0,1).
\]
Moreover, there exists $C=C(\delta)>0$  such that
\begin{equation}\label{stimah1}
\|g_\delta\|_{L^2_{\frac{1}{k_1}}(Q)} \le C \left(\|v_0\|_{L^2_{\frac{1}{k_1}}(Q_{A,1})} + C \|u_0\|_{L^2_{\frac{1}{k_1}}(Q_{A,1})}\right).
\end{equation}
\end{Theorem}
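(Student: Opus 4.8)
The plan is to bridge the initial interval $(0,T-\bar a_1)$, which is not covered by the intermediate problem \eqref{1_medio}, by a free (uncontrolled) evolution, and then to invoke Theorem \ref{ultimo} on the remaining interval $(T-\bar a_1,T)$. Here the hypothesis $\bar a_1=\bar a_2$ is used so that both components switch from the free regime to the controlled one at the common time $T-\bar a_1$, while the hypothesis $k_1=k_2$ serves only to identify the two weighted spaces $L^2_{\frac{1}{k_1}}$ and $L^2_{\frac{1}{k_2}}$, so that the final bound can be written with a single weight as in \eqref{stimah1}. Note that Hypothesis \ref{conditionbeta} forces $\bar a_1<\Gamma\le T$, hence $T-\bar a_1>0$ and the interval $(0,T-\bar a_1)$ is genuinely present.

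First I would set $g\equiv0$ on $(0,T-\bar a_1)$ and solve \eqref{1} on this interval with datum $(u_0,v_0)$ at $t=0$. By Theorem \ref{esistenza} this free problem has a unique solution in $C([0,T-\bar a_1];L^2(0,A;\mathbb{L}))\cap L^2(0,T-\bar a_1;H^1(0,A;\mathbb{H}))$, so that the trace at $t=T-\bar a_1$,
\[
(u_1,v_1):=\big(u(T-\bar a_1,\cdot,\cdot),\,v(T-\bar a_1,\cdot,\cdot)\big),
\]
belongs to $L^2(0,A;\mathbb{L})$. Moreover, since $\mathcal A$ generates a strongly continuous semigroup and $B$ is a bounded perturbation of $(\mathcal A,D(\mathcal A))$, the solution map is bounded on $L^2(0,A;\mathbb{L})$, whence
\[
\|u_1\|_{L^2_{\frac{1}{k_1}}(Q_{A,1})}+\|v_1\|_{L^2_{\frac{1}{k_2}}(Q_{A,1})}\le C\Big(\|u_0\|_{L^2_{\frac{1}{k_1}}(Q_{A,1})}+\|v_0\|_{L^2_{\frac{1}{k_2}}(Q_{A,1})}\Big).
\]

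Next I would apply Theorem \ref{ultimo} to the intermediate system \eqref{1_medio} on $(T-\bar a_1,T)$ with initial datum $(u_1,v_1)$. This produces a control $\tilde g_\delta\in L^2_{\frac{1}{k_1}}(\tilde Q_1)$ and a solution with $u_\delta(T,a,x)=v_\delta(T,a,x)=0$ a.e. on $(\delta,A)\times(0,1)$ and, by \eqref{stimah},
\[
\|\tilde g_\delta\|_{L^2_{\frac{1}{k_1}}(\tilde Q_1)}\le C\Big(\|v_1\|_{L^2_{\frac{1}{k_2}}(Q_{A,1})}+\|u_1\|_{L^2_{\frac{1}{k_1}}(Q_{A,1})}\Big).
\]
I would then define $g_\delta$ on $Q$ as the extension of $\tilde g_\delta$ by zero on $(0,T-\bar a_1)$, and take $(u_\delta,v_\delta)$ to be the concatenation of the free solution on $(0,T-\bar a_1)$ with the controlled solution on $(T-\bar a_1,T)$. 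Since the two pieces share the trace $(u_1,v_1)$ at $t=T-\bar a_1$ and $\bar a_1=\bar a_2$, the concatenation is continuous in time, lies in $C([0,T];L^2(0,A;\mathbb{L}))\cap L^2(0,T;H^1(0,A;\mathbb{H}))$, and solves \eqref{1} on $(0,T)$ with control $g_\delta$, satisfying the required null condition at $T$ on $(\delta,A)\times(0,1)$. Finally, chaining the last two displays, using $k_1=k_2$ to replace every $L^2_{\frac{1}{k_2}}$ by $L^2_{\frac{1}{k_1}}$, and observing that $\|g_\delta\|_{L^2_{\frac{1}{k_1}}(Q)}=\|\tilde g_\delta\|_{L^2_{\frac{1}{k_1}}(\tilde Q_1)}$ because $g_\delta$ vanishes on $(0,T-\bar a_1)$, yields \eqref{stimah1}.

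The main point requiring care is the verification that the concatenated pair genuinely solves \eqref{1} in the stated functional class: one must check that the nonlocal renewal conditions $u(t,0,x)=\int_0^A\beta_1 u\,da$ and $v(t,0,x)=\int_0^A\beta_2 v\,da$ are preserved across $t=T-\bar a_1$ and that no jump in time is introduced. This is precisely where the matching of the interface traces and the assumption $\bar a_1=\bar a_2$ (so that $u$ and $v$ change regime simultaneously) enter, while the role of $k_1=k_2$ is purely to merge the two weighted spaces so that the bound transported through Theorem \ref{ultimo} closes with the single-weight right-hand side appearing in \eqref{stimah1}.
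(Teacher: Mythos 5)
Your proposal is correct and follows essentially the same route as the paper: free (uncontrolled) evolution on $(0,T-\bar a_1)$, application of Theorem \ref{ultimo} on $(T-\bar a_1,T)$ with the trace at $T-\bar a_1$ as new initial datum, concatenation of the two pieces, and the chaining of estimates using $k_1=k_2$ to close \eqref{stimah1}. The only difference is that where you invoke boundedness of the solution map coming from the semigroup structure to bound $\|(u_1,v_1)\|$ by $\|(u_0,v_0)\|$, the paper derives this bound explicitly by multiplying the free system by $U/k_1$ and $V/k_1$ and running a Gronwall argument, which is more self-contained but amounts to the same estimate.
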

\begin{proof} 
Set $T_1= T- \bar a_1$ (clearly $T_2=T_1$). By Theorem  \ref{esistenza}, there exists a unique solution $(U,V)$  of
\begin{equation} \label{1new2}
\begin{cases}
\ds\frac{\partial U}{\partial t}+\frac{\partial U}{\partial a}- k_1(x)U_{xx} + \mu_{11}(t,x,a) U =0, & (t,a,x)\in \bar Q, 
\\[5pt]
\ds\frac{\partial V}{\partial t}+\frac{\partial V}{\partial a}-k_1(x)V_{xx} + \mu_{22}(t,x,a) V-\mu_{21}(t,x,a) U=0, & (t,a,x)\in \bar Q,
\\
U(t, a, 0) = U(t,a, 1)  = 0, & (t,a)\in\bar  Q_{T_1,A},\\
V(t,a, 0) = V(t, a, 1)= 0, & (t,a)\in\bar  Q_{T_1,A},\\
U(0, a, x) =u_0(a,x),\quad V(0,a, x) = v_0(a,x), &  (a,x) \in Q_{A,1},\\
U(t, 0, x)=\int_0^A \beta_1 (a, x)U (t, a, x) da,  & (t,x) \in \bar Q_{T_1,1},\\
V(t, 0, x)=\int_0^A \beta_2 (a, x)V(t, a, x) da,  & (t,x) \in \bar Q_{T_1,1},
\end{cases}
\end{equation}
where $\bar Q = (0, T_1)\times Q_{A,1}$, $ \bar Q_{T_1,1}=  (0,T-\bar a_1) \times (0,1)$ and $ \bar Q_{T_1,A}=  (0,T-\bar a_1) \times (0,A)$, $i=1,2$.

Set $\tilde u_0(a,x) :=U(T_1, a, x)$, $\tilde v_0(a,x) :=V(T_1, a, x)$; clearly $\tilde u_0, \tilde v_0 \in L^2_{\frac{1}{k_1}}(Q_{A,1})$.
Now, consider 
\begin{equation} \label{1new1}
\begin{cases}
\ds\frac{\partial W}{\partial t}+\frac{\partial W}{\partial a}- k_1(x)W_{xx} + \mu_{11}(t,x,a) W =G\chi_\omega, & (t,a,x)\in \tilde Q, 
\\[5pt]
\ds\frac{\partial Y}{\partial t}+\frac{\partial Y}{\partial a}-k_1(x)Y_{xx} + \mu_{22}(t,x,a) Y-\mu_{21}(t,x,a) W=0, & (t,a,x)\in \tilde Q,
\\
W(t, a, 0) = W(t,a, 1)  = 0, & (t,a)\in\tilde  Q_{T,A},\\
Y(t,a, 0) = Y(t, a, 1)= 0, & (t,a)\in\tilde  Q_{T,A},\\
W(T-\bar a_1, a, x) =\tilde u_0(a,x),\quad Y(T-\bar a_2,a, x) =\tilde  v_0(a,x), &  (a,x) \in Q_{A,1},\\
W(t, 0, x)=\int_0^A \beta_1 (a, x)W (t, a, x) da,  & (t,x) \in \tilde Q_{ T,1},\\
Y(t, 0, x)=\int_0^A \beta_2 (a, x)Y(t, a, x) da,  & (t,x) \in \tilde Q_{T,1},
\end{cases}
\end{equation}
Again, by Theorem \ref{esistenza}, there exists a unique solution $(W,Y)$  of \eqref{1new1} and, by the previous theorem, for every $\delta \in (\gamma,A)$
 there exist a control  $G_\delta \in L^2_{\frac{1}{k_1}}(Q)$ such that 
\[
W(T,a,x) =Y(T,a,x) =0 \quad \text{a.e. } (a,x) \in (\delta, A) \times (0,1).
\]
Now, define $(u,v)$ and $g_\delta$ by
\[u:= \begin{cases} U, & \text{in}\; [0, T_1],\\
W, & \text{in} \; [ T_1, T], \end{cases} \quad  v:=\begin{cases}V, & \text{in}\; [0, T_1], \\ Y, & \text{in}\; [T_1, T] \end{cases}
\]
and
\[g_\delta:= \begin{cases} 0, & \text{in}\; [0,  T_1],\\
G_\delta, & \text{in} \; [T_1, T]. \end{cases}
\]
Then $(u,v)$ satisfies \eqref{1} and $g_\delta  \in L^2_{\frac{1}{k_1}}(Q)$ is such that
\[
u(T,a,x) =v(T,a,x) =0 \quad \text{a.e. } (a,x) \in (\delta, A) \times (0,1).
\]
It remains to prove \eqref{stimah1}. To this aim, observe that, by  \eqref{stimah}
\begin{equation}\label{ultima1'}
\begin{aligned}
\|g_\delta\|_{L^2_{\frac{1}{k_1}}(Q)}^2 &= \int_{\tilde T_1}^T\int_0^A \int_0^1 \frac{G_\delta^2}{k_1} dxdadt  \le C \|\tilde u_0\|_{L^2_{\frac{1}{k_1}}(Q_{A,1})}^2+ \|\tilde v_0\|_{L^2_{\frac{1}{k_1}}(Q_{A,1})}^2 
\\
&=C\left( \int_0^A \int_0^1 \frac{U^2}{k_1}(T_1, a,x)dxda+  \int_0^A \int_0^1 \frac{V^2}{k_1}(T_1, a,x)dxda\right)
\end{aligned}
\end{equation}
for a  strictly positive constant $C= C_\delta$. Thus, it is sufficient to estimate the last integral in \eqref{ultima1'}. To do this,  we multiply  the two equations of \eqref{1new2} by $\ds \frac{U}{k_1}$ and $\ds \frac{V}{k_1}$, respectively; then integrating over $Q_{A,1}$, we obtain
\[
\frac{1}{2}\frac{d}{dt} \int_0^A \int_0^1 \frac{U^2}{k_1} dxda + \frac{1}{2}\int_0^1 \frac{U^2(t,A,x)}{k_1}dx + \int_0^A\int_0^1 U_x^2 dxda + \int_0^A \int_0^1 \mu_{11} \frac{U^2}{k_1} dxda=  \frac{1}{2}\int_0^1 \frac{U^2(t,0,x)}{k_1}dx
\]
and
\[
\begin{aligned}
&\frac{1}{2}\frac{d}{dt} \int_0^A \int_0^1 \frac{V^2}{k_1} dxda + \frac{1}{2}\int_0^1 \frac{V^2(t,A,x)}{k_1}dx + \int_0^A\int_0^1 V_x^2 dxda + \int_0^A \int_0^1 \mu_{22} \frac{V^2}{k_1} dxda- \int_0^A \int_0^1 \mu_{21} \frac{UV}{k_1} dxda\\
&=  \frac{1}{2}\int_0^1 \frac{V^2(t,0,x)}{k_1}dx.
\end{aligned}
\]
Hence, using the fact that $U(t,0,x)= \int_0^A \beta_1(a,x) U(t,a,x)da$ and $V(t,0,x)= \int_0^A \beta_2(a,x) V(t,a,x)da$, one has
\[
\begin{aligned}
\frac{1}{2}\frac{d}{dt} \int_0^A \int_0^1 \frac{U^2}{k_1} dxda& \le \frac{1}{2}\frac{d}{dt} \int_0^A \int_0^1 \frac{U^2}{k_1} dxda + \frac{1}{2}\int_0^1 \frac{U^2(t,A,x)}{k_1}dx + \int_0^A\int_0^1 U_x^2 dxda + \int_0^A \int_0^1 \mu_{11} \frac{U^2}{k_1} dxda
\\
&= \frac{1}{2}\int_0^1 \frac{1}{k_1}\left(\int_0^A \beta_1(a,x) U(t,a,x)da\right)^2dx\le  \frac{C}{2}\int_0^A\int_0^1 \frac{U^2}{k_1}dxda
\end{aligned}
\]
and
\[
\begin{aligned}
\frac{1}{2}\frac{d}{dt} \int_0^A \int_0^1 \frac{V^2}{k_1} dxda &\le 
\frac{1}{2}\frac{d}{dt} \int_0^A \int_0^1 \frac{V^2}{k_1} dxda + \frac{1}{2}\int_0^1 \frac{V^2(t,A,x)}{k_1}dx + \int_0^A\int_0^1 V_x^2 dxda + \int_0^A \int_0^1 \mu_{22} \frac{V^2}{k_1} dxda\\
&=  \int_0^A \int_0^1 \mu_{21} \frac{UV}{k_1} dxda + \frac{1}{2}\int_0^1 \frac{V^2(t,0,x)}{k_1}dx\\
&\le
 \frac{1}{2} \int_0^1 \frac{1}{k_1}\left(\int_0^A \beta_1(a,x) V(t,a,x)da\right)^2dx +   \frac{C}{2}\int_0^A\int_0^1 \frac{U^2}{k_1}dxda+   \frac{C}{2}\int_0^A\int_0^1 \frac{V^2}{k_1}dxda\\
& \le  \frac{C}{2}\int_0^A\int_0^1 \frac{U^2}{k_1}dxda +  C\int_0^A\int_0^1 \frac{V^2}{k_1}dxda.
\end{aligned}
\]
Thus,
\begin{equation}\label{EqU}
\frac{1}{2}\frac{d}{dt} \int_0^A \int_0^1 \frac{U^2}{k_1} dxda \le  \frac{C}{2}\int_0^A\int_0^1 \frac{U^2}{k_1}dxda
\end{equation}
and
\begin{equation}\label{EqV}
\frac{1}{2}\frac{d}{dt} \int_0^A \int_0^1 \frac{V^2}{k_1} dxda  \le  \frac{C}{2}\int_0^A\int_0^1 \frac{U^2}{k_1}dxda +  C\int_0^A\int_0^1 \frac{V^2}{k_1}dxda.
\end{equation}
Setting $F_1(t):= \|U(t)\|^2_{L^2_{\frac{1}{k_1}}(Q_{A,1})}$ and multiplying 
\eqref{EqU}
by $e^{-Ct}$, it results
\[
\frac{d}{dt} \left(e^{-Ct}F_1(t)\right) \le 0.
\]
Integrating over $(0,t)$, for all $t \in (0,T),$ we have
\begin{equation}\label{ultimo3}
\int_0^A \int_0^1 \frac{U^2(t,a,x)}{k_1} dxda \le e^{CT} \int_0^A \int_0^1 \frac{U^2(0,a,x)}{k_1} dxda=e^{CT} \int_0^A \int_0^1 \frac{u_0^2(a,x)}{k_1} dxda.
\end{equation}
In particular,
\begin{equation}\label{ultima2}
\int_0^A \int_0^1 \frac{U^2(T_1,a,x)}{k_1} dxda\le C\int_0^A \int_0^1 \frac{u_0^2(a,x)}{k_1} dxda.
\end{equation}
Now, set $F_2(t):= \|V(t)\|^2_{L^2_{\frac{1}{k_1}}(Q_{A,1})}$ and multiply \eqref{EqV} by $e^{-Ct}$, it results, by \eqref{ultimo3},
\[
\frac{d}{dt} \left(e^{-Ct}F_2(t)\right) \le Ce^{-Ct}F_1(t) \le CF_1(t) \le C\int_0^A \int_0^1 \frac{u_0^2(a,x)}{k} dxda.
\]
Integrating over $(0,t)$, for all $t \in (0,T),$ we have
\[
\begin{aligned}
\int_0^A \int_0^1 \frac{V^2(t,a,x)}{k_1} dxda &\le e^{CT} \int_0^A \int_0^1 \frac{V^2(0,a,x)}{k_1} dxda+ C\int_0^A \int_0^1 \frac{u_0^2(a,x)}{k_1} dxda\\
&\le C\int_0^A \int_0^1 \left(\frac{u_0^2(a,x)}{k_1}+\frac{v_0^2(a,x)}{k_1}\right) dxda.
\end{aligned}
\]
In particular,
\begin{equation}\label{ultima2'}
\int_0^A \int_0^1 \frac{V^2(T_2,a,x)}{k_1} dxda\le C\int_0^A \int_0^1\left(\frac{u_0^2(a,x)}{k_1}+\frac{v_0^2(a,x)}{k_1}\right) dxda.
\end{equation}
Hence, by \eqref{ultima1'},
\[
\begin{aligned}
\|g_\delta\|_{L^2_{\frac{1}{k_1}}(Q)}^2 &\le C\left( \int_0^A \int_0^1 \frac{U^2}{k_1}(T_1, a,x)dxda+  \int_0^A \int_0^1 \frac{V^2}{k_1}(T_2, a,x)dxda\right) \\
& \le  C\int_0^A \int_0^1\left(\frac{u_0^2(a,x)}{k_1}+\frac{v_0^2(a,x)}{k_1}\right) dxda.
\end{aligned}
\]
Thus \eqref{stimah1} follows.
\end{proof}

\end{document}